\title{Families of relatively exact Lagrangians, free loop spaces and generalised homology}
\author{Noah Porcelli}
\newcommand{\Addresses}{{
  \bigskip
  \footnotesize

  \textsc{Imperial College London, South Kensington Campus, London SW7 2AZ, UK}\par\nopagebreak
  \textit{E-mail address}, \texttt{n.porcelli@imperial.ac.uk}

}}
\date{}
\newtheorem{TT}{Theorem}[section]
\newtheorem{LL}[TT]{Lemma}
\newtheorem{CC}[TT]{Corollary}
\newtheorem{PP}[TT]{Proposition}
\newtheorem{DD}[TT]{Definition}
\newtheorem{EE}[TT]{Example}
\newtheorem{RR}[TT]{Remark}
\newtheorem{QQ}[TT]{Question}
\newtheorem{BB}[TT]{Assumption}
\newtheorem{MM}[TT]{Claim}
\newcommand{\Ind}{\mathrm{Ind}}
\newcommand{\Ker}{\mathrm{Ker\,}}
\newcommand{\re}{\mathrm{Re\,}}
\newcommand{\im}{\mathrm{Im\,}}
\newcommand{\Rank}{\mathrm{Rank}}
\newcommand{\R}{\mathbb{R}}
\newcommand{\C}{\mathbb{C}}
\newcommand{\Q}{\mathbb{Q}}
\newcommand{\Z}{\mathbb{Z}}
\newcommand{\bP}{\mathbb{P}}
\newcommand{\Sphere}{\mathbb{S}}
\newcommand{\sP}{\mathcal{P}}
\newcommand{\sM}{\mathcal{M}}
\newcommand{\sN}{\mathcal{N}}
\newcommand{\sD}{\mathcal{D}}
\newcommand{\sW}{\mathcal{W}}
\newcommand{\sG}{\mathcal{G}}
\newcommand{\sE}{\mathcal{E}}
\newcommand{\sL}{\mathcal{L}}
\newcommand{\sU}{\mathcal{U}}
\newcommand{\sQ}{\mathcal{Q}}
\newcommand{\sR}{\mathcal{R}}
\newcommand{\Shrek}{\mathchar\numexpr"6000+`!\relax}
\begin{document}
    \maketitle
    \pagenumbering{arabic}
    \begin{abstract}
        We prove that (under appropriate orientation conditions, depending on $R$) a Hamiltonian isotopy $\psi^1$ of a symplectic manifold $(M, \omega)$ fixing a relatively exact Lagrangian $L$ setwise must act trivially on $R_*(L)$, where $R_*$ is some generalised homology theory. We use a strategy inspired by that of Hu, Lalonde and Leclercq (\cite{Hu-Lalonde-Leclercq}), who proved an analogous result over $\Z/2$ and over $\Z$ under stronger orientation assumptions. However the differences in our approaches let us deduce that if $L$ is a homotopy sphere, $\psi^1|_L$ is homotopic to the identity. Our technical set-up differs from both theirs and that of Cohen, Jones and Segal (\cite{CJS, C}).\par 
        We also prove (under similar conditions) that $\psi^1|_L$ acts trivially on $R_*(\sL L)$, where $\sL L$ is the free loop space of $L$. From this we deduce that when $L$ is a surface or a $K(\pi, 1)$, $\psi^1|_L$ is homotopic to the identity.\par 
        Using methods of \cite{Lalonde-McDuff}, we also show that given a family of Lagrangians all of which are Hamiltonian isotopic to $L$ over a sphere or a torus, the associated fibre bundle cohomologically splits over $\Z/2$. \par 
    \end{abstract}
    \tableofcontents
    \section{Introduction}
        \subsection{Background}
            Let $(M^{2n}, \omega)$ be a symplectic manifold of dimension $2n$, and $L^n \subseteq M$ a Lagrangian submanifold. Let $\psi^t$ be a Hamiltonian isotopy of $M$ such that its time-1 flow preserves $L$ setwise, $\psi^1 (L) = L$. Its restriction to $L$ is then a self-diffeomorphism of $L$. We can consider the \emph{Hamiltonian monodromy group} $\sG_L \subseteq \operatorname{Diff}(L)$ of diffeomorphisms of $L$ arising in this way. A natural question is:
            \begin{QQ}\label{Question}
                What is $\sG_L$?
            \end{QQ}
            An elementary argument using the Weinstein neighbourhood theorem shows that if $f$ and $g$ are isotopic diffeomorphisms of $L$ and $g$ lies in $\sG_L$, then $f$ does too. This implies that $\sG_L$ is a union of connected components in $\operatorname{Diff}(L)$, and hence to study $\sG_L$, it suffices to study its image in the mapping class group $\pi_0 \operatorname{Diff}(L)$.\par 
            The subgroup $\sG_L$ was first studied by Yau in \cite{Mei-LinYau}, who proved the following theorem.
            \begin{TT}[\cite{Mei-LinYau}]
                Let $T$ and $T'$ be the standard monotone Clifford and Chekanov tori in $\C^2$ with the same monotonicity constant. Then in both cases $L = T$ or $T'$, $\pi_0 \sG_L \cong \Z / 2$.\par 
                However, there is no isomorphism $H_1(T) \cong H_1(T')$ which respects the $\Z / 2$ action on both and also preserves the Maslov index homomorphism.
            \end{TT}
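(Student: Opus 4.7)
The plan is to handle the two claims in sequence. For the first, I would bound $\pi_0 \sG_L$ both above and below to show it equals $\Z/2$; for the second, I would compute the pair $(\mu, \sigma)$ of Maslov homomorphism and $\Z/2$-action explicitly in each case and show the two pairs are inequivalent.

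For the lower bound on $\pi_0 \sG_L$: on the Clifford torus $T = \{|z_1| = |z_2| = r\} \subseteq \C^2$ the linear symplectic swap $(z_1, z_2) \mapsto (z_2, z_1)$ preserves $T$, and since $\operatorname{Sp}(4, \R)$ is connected and compactly-supported symplectic isotopies of $\C^2$ are Hamiltonian, this extends to a Hamiltonian isotopy whose restriction to $T$ is non-trivial in $\pi_0 \operatorname{Diff}(T)$. On the Chekanov torus $T'$, realised as a surface of revolution from a symmetric planar curve, there is an analogous reflection symmetry extending to a Hamiltonian isotopy by the same argument. For the upper bound, any $[\phi] \in \pi_0 \sG_L$ acts on $H_1(L) \cong \Z^2$ preserving both the Maslov homomorphism $\mu$ and the Landau--Ginzburg superpotential $W_L$ (equivalently, the weighted count of Maslov-$2$ holomorphic disks through a generic point), both being Hamiltonian isotopy invariants. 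Using the known formulas for $W_T$ (with Newton polygon $\{(1,0), (0,1)\}$) and $W_{T'}$ (computed by Chekanov--Schlenk and Auroux, with a distinct Newton polygon), a direct combinatorial check shows that the stabiliser of $(\mu, W_L)$ in $GL(2, \Z)$ is exactly $\Z/2$ in each case.

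For the second claim, I would compute $(\mu, \sigma)$ in natural bases. On $T$, with basis given by the two circle factors $(e_1, e_2)$, one has $\mu(e_1) = \mu(e_2) = 2$ and $\sigma$ swaps $e_1 \leftrightarrow e_2$; in particular the $+1$-eigenspace of $\sigma$ is $\Z \cdot (e_1 + e_2)$, on which $\mu$ takes the value $4$. On $T'$, in the natural basis in which $\mu(a, b) = 2a$, the involution $\sigma$ is an order-$2$ element of $SL(2, \Z)$ coming from the reflection symmetry of the Chekanov Newton polygon, and its $+1$-eigenspace is generated by a vector of Maslov index $2$. Any isomorphism $H_1(T) \cong H_1(T')$ compatible with both $\mu$ and $\sigma$ would identify the two $+1$-eigenspaces, but their Maslov images are $4\Z$ and $2\Z$ respectively, a contradiction.

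The main obstacle in this plan is the upper bound on $\pi_0 \sG_L$: it relies on the Hamiltonian isotopy invariance of the Landau--Ginzburg superpotential (equivalently, of the Fukaya $A_\infty$-algebra of $L$ up to quasi-isomorphism), a non-trivial Floer-theoretic statement requiring careful analysis of the moduli spaces of holomorphic disks on both tori, together with Cho-/Auroux-type explicit computations of the Maslov-$2$ disk counts to pin down $W_T$ and $W_{T'}$.
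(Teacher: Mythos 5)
This theorem is not proved in the paper: it is Yau's result, stated with the citation \cite{Mei-LinYau} and used purely as background (in particular to show that relative exactness is a necessary hypothesis for the triviality results proved later). There is therefore no ``paper proof'' to compare your attempt against, and it must be judged on its own.

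On its own terms, your plan is reasonable -- explicit realisation of the generator for the lower bound, Hamiltonian-isotopy-invariant data for the upper bound, and comparison of $\mu$ on the $+1$-eigenspaces for the final incompatibility -- and you correctly identify the superpotential step as the crux: for a monotone torus the symplectic area class is proportional to $\mu$, and the stabiliser of $\mu = (2,2)$ alone in $GL(2,\Z)$ is infinite (for instance $\left(\begin{smallmatrix} n & n-1 \\ 1-n & 2-n \end{smallmatrix}\right)$ fixes it for every $n$), so a finer Hamiltonian-isotopy invariant really is needed. There is, however, a genuine slip in the last step: you describe the involution on $H_1(T')$ as an order-two element of $SL(2,\Z)$ while simultaneously claiming a nonzero $+1$-eigenspace, but the only nontrivial involution in $SL(2,\Z)$ is $-\mathrm{Id}$, whose $+1$-eigenspace is zero. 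The involution you want must have determinant $-1$, just as the Clifford swap does, so it lies in $GL(2,\Z)\setminus SL(2,\Z)$. Moreover, the specific shape of this involution -- in particular that its $+1$-eigenspace is spanned by a Maslov-$2$ class rather than a Maslov-$0$ class -- is precisely the datum on which the $4\Z$-versus-$2\Z$ comparison rests, and it needs to be derived from the geometry of the Chekanov construction (or read off from Yau's explicit monodromy generator) rather than asserted from the Newton-polygon symmetry.
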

            This provides a new proof that these two Lagrangians are not Hamiltonian isotopic.\par 
            Question \ref{Question} has been studied in few other places: by Hu, Lalonde and Leclercq in \cite{Hu-Lalonde-Leclercq}, by Mangolte and Welschinger in \cite{Mangolte-Welschinger}, by Evans and Rizell in \cite{ER14}, by Ono in \cite{Ono}, by Varolgunes in \cite{Varolgunes}, by Keating in \cite{Ailsa}, and by Augustynowicz, J. Smith and Wornbard in \cite{Jack}. Out of these, only Hu-Lalonde-Leclercq and Evans-Rizell focus on the case when $L$ is exact and embedded, which is where we will focus.\par
            We now assume (for the rest of the paper) the following:
            \begin{BB}
                \begin{enumerate}
                    \item $M$ is a product of symplectic manifolds which are either compact or Liouville. \par 
                    \item $L$ is compact, connected and relatively exact, i.e. $\omega$ vanishes on the relative homotopy group $\pi_2(M, L)$.\par 
                    \item $\psi^t$ is compactly supported for all $t$ and constant in $t$ near 0 and 1.
                \end{enumerate}
            \end{BB}
            The first two assumptions constrain the behaviour of the holomorphic curves that we will consider. Note that given $\psi^t$ not satisfying the third, we can always deform it (by multiplying the Hamiltonian by an appropriate cut-off function) so that it does, without changing $\psi^1|_L$, so this assumption does not lose any generality.\par 
            
            $\sG_L$ was studied by Hu, Lalonde and Leclercq in this more restrictive setting in \cite{Hu-Lalonde-Leclercq}, where they proved the following:
            \begin{TT}[\cite{Hu-Lalonde-Leclercq}]\label{HLL theorem}
                \begin{enumerate}
                    \item $\psi^1|_L$ acts as the identity on $H_*(L; \Z/2)$.
                    \item If $L$ admits a relative spin structure in $M$ which $\psi^1$ preserves, $\psi^1|_L$ acts as the identity on $H_*(L; \Z)$.
                \end{enumerate}
            \end{TT}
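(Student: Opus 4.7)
My plan is to reduce the statement to a triviality result in Lagrangian Floer homology, where it will follow from Hamiltonian invariance. Since $L$ is relatively exact, $HF_*(L, L; \Z/2)$ is well-defined, and the PSS (or Oh) construction supplies a canonical isomorphism $\Psi: H_*(L; \Z/2) \xrightarrow{\sim} HF_*(L, L; \Z/2)$. As $\psi^1$ is a symplectomorphism with $\psi^1(L) = L$, it acts on both sides: as $(\psi^1|_L)_*$ on singular homology, and by pushforward of intersection points and strips on Floer homology. The first step is to verify naturality of $\Psi$, i.e.\ that these two actions commute with $\Psi$. This is a direct check, since $\psi^1$ carries the Morse and $J$-holomorphic data defining $\Psi$ for $L$ to the corresponding data for $\psi^1(L) = L$.

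The second and crucial step is to show that the pushforward $\psi^1_*$ on $HF_*(L, L; \Z/2)$ is the identity. Since $\psi^1$ is Hamiltonian isotopic to the identity through $\psi^t$, one constructs a chain homotopy between $\psi^1_*$ and the identity by counting $J$-holomorphic strips $u: \R \times [0, 1] \to M$ with moving boundary conditions $u(s, 0) \in L$ and $u(s, 1) \in \psi^{\rho(s)}(L)$, where $\rho: \R \to [0, 1]$ is a cutoff function equal to $0$ for $s \ll 0$ and to $1$ for $s \gg 0$. Relative exactness provides the energy bounds and rules out both disk and sphere bubbling, so the relevant moduli spaces compactify as expected and their count gives the required homotopy. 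Combined with the naturality of Step 1, this yields $(\psi^1|_L)_* = \mathrm{id}$ on $H_*(L; \Z/2)$.

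For the integer statement, the same strategy applies, but every moduli space appearing (those used to define PSS, the pushforward, and the chain homotopy) must now be coherently oriented. A relative spin structure on $L$ in $M$ provides such orientations for Floer theory over $\Z$, and the hypothesis that $\psi^1$ preserves this relative spin structure (up to the appropriate notion of equivalence) is exactly what guarantees that the pushforward map respects the chosen orientations and that the chain homotopy is signed consistently. I expect this orientation bookkeeping to be the main obstacle: one has to track the identification of $(\psi^1)^*$-pullback with the original spin structure through each layer of the construction (PSS, pushforward, chain homotopy) and check that the signs produced match up, so that the $\Z/2$ argument lifts faithfully to $\Z$.
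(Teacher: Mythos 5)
Your proposal follows the original Hu--Lalonde--Leclercq route (PSS isomorphism plus triviality of the induced Floer-theoretic monodromy), which the paper explicitly does \emph{not} take. The paper states this theorem as a citation and then recovers it as the $R = H\Z/2$ and $R = H\Z$ cases of Theorem~\ref{Main Theorem}; the proof of that theorem deliberately avoids the Floer package altogether. Instead of defining Floer chain complexes and a PSS map, the paper builds a single compact moduli space $\sP$ of holomorphic discs with moving Lagrangian boundary conditions, together with a one-parameter-family cobordism $\sW$ from $\mathrm{Id}: L \to L$ to $\pi: \sP \to L$ over $L$, then observes that (i) $\pi$ is homotopic to $\psi^1 \circ \pi$ via Lemma~\ref{Hofer Bound}, and (ii) the cobordism plus Pontryagin--Thom gives a stable section of $\pi$ after smashing with $R$ (Lemma~\ref{Cobordism Implies Section}). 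Together these immediately force $\psi^1 \simeq \mathrm{Id}$ after smashing with $R$. This buys two things over your route: it needs only Gromov compactness and generic transversality (no gluing, no PSS comparison, no Floer differential), and it is formulated so that replacing $H\Z/2$ or $H\Z$ by an arbitrary ring spectrum $R$ satisfying Assumption~\ref{Orientation Assumption} is automatic, whereas a Floer-theoretic version would require setting up Floer homology over $R$.

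There is also a genuine gap in your key step. You assert that counting strips with the moving boundary condition $u(s,1) \in \psi^{\rho(s)}(L)$ ``constructs a chain homotopy between $\psi^1_*$ and the identity.'' But counting rigid such strips produces a continuation \emph{chain map}, not a chain homotopy, and for a nontrivial loop of Lagrangians $\{\psi^t(L)\}_t$ this continuation map is \emph{a priori} a nontrivial Seidel-type automorphism of $HF(L,L)$ --- it is certainly not the identity just because the moduli spaces are compact. The actual content is that relative exactness forces holomorphic strips with both boundaries on $L = \psi^1(L)$ to be constant, which (after stretching the domain, as in the Hofer estimate used for Lemma~\ref{Hofer Bound}) pins the continuation data down near the moduli space of constant maps and hence near the identity. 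Your sketch invokes relative exactness only as providing ``energy bounds'' and ``ruling out bubbling,'' which guarantees compactness but not triviality of the resulting map; the step from ``compact moduli space'' to ``the induced map is the identity'' is precisely where all the geometric work lives, and it needs to be made explicit before the argument closes.
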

            It follows from Yau's results that relative exactness is a necessary condition here.\par 
            We will recover Theorem \ref{HLL theorem} as a special case of Theorem \ref{Main Theorem}. Note that in the case that $L$ is a homotopy sphere, this theorem does not give any information on the homotopy class of $\psi^1|_L$: if $\psi^1|_L$ were not homotopic to the identity, it would not preserve any orientation of $L$ and hence would not preserve any relative spin structure.\par 
            Our goals will be to weaken the orientation assumptions, extend this result to some other generalised cohomology theories, and extend this result to the free loop space of $L$. From this, we will fully compute $\sG_L$ in the case that $n = 2$. Our technical set-up will be very different to that of Hu, Lalonde and Leclercq, but our general approach is inspired by theirs.
            \begin{RR}
                Hu, Lalonde and Leclercq prove this using Morse and quantum cohomology (as in \cite{BiranCornea}), and one possible approach to extending Theorem \ref{HLL theorem} to other generalised cohomology theories would be to recreate Hu, Lalonde and Leclercq's proof using the methods of Cohen, Jones and Segal \cite{CJS, C}. However one may need to require stronger orientation hypotheses than Assumption \ref{Orientation Assumption}.
            \end{RR}
            \begin{RR}
                If $L$ not assumed to be relatively exact, Theorem \ref{HLL theorem} fails, as seen in Yau's example (\cite{Mei-LinYau}). However if $L$ is monotone, the failure for Theorem \ref{HLL theorem} to hold is understood- it can be seen in the failure of a certain element in $HF^*(L)$ to lie in the centre of $HF^*(L)$ (when $L$ is exact, $HF^*(L) \cong H^*(L)$ is always commutative)- see \cite[Remark 4.2]{Jack} and \cite{Varolgunes} for further discussions on this point.\par 
                It would be interesting to see if a similar phenomenon holds in the setting of other generalised cohomology theories, though there is not yet a construction of a Lagrangian Floer cohomology ring for non-relatively exact Lagrangians in the literature.
            \end{RR}
        \subsection{Extensions to generalised homology theories}
            We will define a space $\mathcal{D}_0$, which is roughly the space of smooth maps $D^2 \rightarrow M$ sending 1 to $L$, with moving Lagrangian boundary conditions on the rest of the boundary. This admits a map $\pi: \mathcal{D}_0 \rightarrow L$ by evaluating at 1. We will construct a virtual vector bundle called the \emph{index bundle}, $\Ind$, on any finite CW complex in $\mathcal{D}_0$ (compatible with restriction). After picking a (generic) almost complex structure $J$ on $M$ and some specific choices of moving Lagrangian boundary conditions, the tangent bundle of the moduli space of $J$-holomorphic discs with these boundary conditions will be stably isomorphic to the restriction of $\Ind$. In particular, $L$ embeds into $\mathcal{D}_0$ as the space of constant discs, with $\Ind$ restricting to $TL$.\par 
            We fix some ring spectrum $R$, and will recall definitions of ring spectra and $R$-orientability in Section \ref{Spectra Section}. We will make the following assumption, and in Section \ref{Orienty Subsection}, find conditions under which it holds.
            \begin{BB}\label{Orientation Assumption}
                The virtual vector bundle $\Ind - \pi^* TL$ is $R$-orientable.
            \end{BB}
            We will show:
            \begin{TT}\label{Main Theorem}
                Under Assumption \ref{Orientation Assumption}, the map 
                $$\psi^1|_L: \Sigma^\infty_+ L \wedge R \rightarrow \Sigma^\infty_+ L \wedge R$$
                is homotopic to the identity as a map of $R$-modules.
            \end{TT}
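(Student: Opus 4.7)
The plan is to construct an auxiliary $R$-module self-map $\Phi$ of $\Sigma^\infty_+ L \wedge R$ via a Pontryagin--Thom construction on a moduli space of $J$-holomorphic discs with moving Lagrangian boundary conditions, and then exhibit homotopies $\mathrm{id} \simeq \Phi \simeq \psi^1|_L$, mirroring the strategy of Hu--Lalonde--Leclercq at the spectrum level.

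First, fix a generic (time-dependent) almost complex structure $J$ and let $\sM \subset \mathcal{D}_0$ denote the moduli space of $J$-holomorphic discs with boundary condition $u(e^{2\pi i t}) \in \psi^t(L)$. Relative exactness of $L$ rules out disc and sphere bubbling, so Gromov compactness ensures $\sM$ is a compact smooth manifold for generic $J$, with $T\sM$ stably isomorphic to $\Ind|_\sM$ by the index-bundle construction. The evaluation $\mathrm{ev} := \pi|_\sM : \sM \to L$ at the marked point $1 \in \partial D^2$ has stable relative tangent bundle $\Ind|_\sM - \mathrm{ev}^* TL$, which is $R$-oriented by Assumption \ref{Orientation Assumption}. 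Pontryagin--Thom then produces an $R$-module Umkehr map $\mathrm{ev}^! : \Sigma^\infty_+ L \wedge R \to \Sigma^\infty_+ \sM \wedge R$, and I set $\Phi := \mathrm{ev}_* \circ \mathrm{ev}^!$.

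For the homotopy $\Phi \simeq \mathrm{id}$, I would interpolate $L_t = \psi^t(L)$ with the constant family $L_t \equiv L$ via a smooth one-parameter family of moving boundary conditions (possibly passing through more general boundary data than Lagrangian isotopies fixing $L$, as is standard in Floer-theoretic continuation arguments). The parametrized moduli assembles into a compact cobordism with a coherent $R$-orientation on its relative tangent bundle, and at the trivial end relative exactness reduces the moduli to constant discs, yielding $\sM_0 = L$ with $\mathrm{ev}_0 = \mathrm{id}_L$ and hence identity Pontryagin--Thom map. Homotopy invariance under $R$-oriented cobordism then gives $\Phi \simeq \mathrm{id}$.

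For the homotopy $\Phi \simeq \psi^1|_L$, I would use a graph-trick reinterpretation: conjugating each disc $u(z)$ by $(\psi^{\arg(z)/2\pi})^{-1}$ transforms the moving-boundary problem into a stationary $L$-boundary problem with a discontinuity at the marked point prescribed by $\psi^1|_L$, since approaching $1$ via $\arg z \to 2\pi$ introduces a factor of $(\psi^1)^{-1}$ while approaching via $\arg z \to 0$ does not. In the stationary problem, relative exactness again reduces the moduli to constant discs, and the $\psi^1|_L$-twist at the marked point survives the Pontryagin--Thom construction to identify $\Phi$ with $\psi^1|_L$. Combining with the previous step gives $\mathrm{id} \simeq \psi^1|_L$. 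The main obstacle, as in Hu--Lalonde--Leclercq, is tracking the $R$-orientation data and compatibility of identifications through the graph trick: this requires careful manipulation of the trivializations of $\Ind$ used to define the Umkehr map, which at the spectrum level is significantly more delicate than at the homology level.
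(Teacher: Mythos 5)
Your overall architecture — moduli space of moving-boundary discs, Pontryagin--Thom Umkehr maps, a cobordism to the constant family showing $\Phi\simeq\mathrm{id}$, and a separate argument identifying $\Phi$ with $\psi^1|_L$ — matches the spirit of the paper's proof, and your first homotopy is essentially what the paper does. However, your second homotopy does not work as stated, and it is here that the paper does something genuinely different.

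You propose to prove $\Phi\simeq\psi^1|_L$ by conjugating each disc $u$ by $\bigl(\psi^{\arg(z)/2\pi}\bigr)^{-1}$ and appealing to relative exactness to reduce the resulting ``stationary'' moduli space to constant discs. This fails for several reasons. First, $\arg(z)$ is not globally defined on the interior of the disc, so the conjugation does not give a well-defined map on the disc; even restricting attention to the boundary circle, the conjugation is discontinuous at the marked point $1$ because $\psi^0=\mathrm{id}$ and $\psi^1$ are distinct maps (only $\psi^1(L)=L$ holds). Second, even where it is defined, the conjugation destroys $J$-holomorphicity: the conjugated map solves a domain-dependently perturbed Cauchy--Riemann equation, and relative exactness of $L$ gives no reason for solutions of that perturbed problem to be constant. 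So the assertion that ``relative exactness again reduces the moduli to constant discs'' is unjustified, and the claim that the ``$\psi^1|_L$-twist survives the Pontryagin--Thom construction'' papers over the actual work.

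The paper sidesteps this entirely by choosing a different domain. It uses a long strip-like domain $G_l$ (Section 4) with moving boundary condition $u(z)\in\psi^{\im z}(L)$ on $\partial G_l$, so that the lower boundary arc lives on $L$ and the upper on $\psi^1(L)=L$, eliminating any discontinuity. There are then \emph{two} marked points, $0$ (bottom) and $i$ (top), giving two evaluation maps $\pi, \pi':\sP\to L$. The crucial geometric input, which has no analogue in your proposal, is Lemma \ref{Hofer Bound}: Hofer's Gromov-compactness estimate for long strips shows that for $l$ large the two evaluations are uniformly close, hence $\pi\simeq\pi'$. Separately, a homotopy that slides the evaluation point along $\partial G_l$ from $0$ to $i$ (the map $H(u,t)=\psi^1\bigl((\psi^{\im\gamma(t)})^{-1}(u(\gamma(t)))\bigr)$ in the proof of Lemma \ref{Moduli Space Works}) shows $\pi'\simeq\psi^1\circ\pi$. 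Combining these with the cobordism-section argument (Lemma \ref{Cobordism Implies Section}) gives $\psi^1|_L\simeq\mathrm{id}$ without ever needing to make the graph trick rigorous. If you want to salvage your disc-based approach you would need to supply a substitute for Hofer's estimate; as it stands the identification $\Phi\simeq\psi^1|_L$ is a gap, not a sketch.
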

            \begin{RR}
                Our proof of this will use a minimal amount of technical machinery: we will only use standard Gromov compactness results and standard transversality results, and we will not need any form of gluing. It is possible to prove this without invoking any transversality results, and instead only use the fact that certain operators are Fredholm, using ideas in \cite{Hofer}; see \cite[Remark 2.13]{Me+Amanda}.
            \end{RR}
            From standard duality theory for spectra (Corollary \ref{dualid}), we deduce from Theorem \ref{Main Theorem}:
            \begin{CC}\label{COROLLARY}
                Under Assumption \ref{Orientation Assumption},
                \begin{enumerate}
                    \item $\psi^1|_L$ induces the identity map on $R_*(L)$.
                    \item $\psi^1|_L$ induces the identity map on $R^*(L)$.
                \end{enumerate}
            \end{CC}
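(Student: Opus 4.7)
The plan is to read off both statements directly from Theorem \ref{Main Theorem}, which produces a homotopy $\Sigma^\infty_+(\psi^1|_L) \wedge R \simeq \mathrm{id}$ of $R$-module spectra. Everything else is formal.

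For part (1), I would simply pass to homotopy groups. By definition $R_*(L) = \pi_*(\Sigma^\infty_+ L \wedge R)$, and the induced action of $\psi^1|_L$ on $R_*(L)$ is precisely $\pi_*(\Sigma^\infty_+(\psi^1|_L) \wedge R)$. Applying $\pi_*$ to the homotopy furnished by Theorem \ref{Main Theorem} yields the identity on $R_*(L)$.

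For part (2), the $R$-orientability of $TL$ enters through Poincaré duality. Under this assumption $L$ carries a fundamental class $[L]_R \in R_n(L)$, and capping with $[L]_R$ gives an isomorphism $R^*(L) \xrightarrow{\sim} R_{n-*}(L)$. By part (1), $(\psi^1|_L)_*$ fixes $[L]_R$ (so in particular $\psi^1|_L$ is $R$-orientation-preserving), and naturality of cap product identifies $(\psi^1|_L)^*$ on $R^*(L)$ with $(\psi^1|_L)_*$ on $R_{n-*}(L)$ under Poincaré duality. Since the latter is the identity by part (1), so is the former.

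There is no real obstacle here: the entire difficulty of the corollary is absorbed into Theorem \ref{Main Theorem}, and what remains is a standard bookkeeping exercise converting a spectrum-level homotopy into statements on (co)homology. As a sanity check, one could alternatively deduce (2) by applying the free-forgetful adjunction $[\Sigma^\infty_+ L \wedge R, \Sigma^* R]_R \cong R^*(L)$ directly to the $R$-module-level statement in Theorem \ref{Main Theorem}; this bypasses the orientation hypothesis, but the Poincaré duality route is the most natural setting in which the stated hypothesis of part (2) is actually used.
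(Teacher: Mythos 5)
Your proof is correct, but for part (2) it takes a genuinely different route from the paper's. The paper (as signalled by the sentence preceding the corollary) deduces both parts from Corollaries \ref{Surjectivity Corollary} and \ref{Injectivity Corollary} applied to $\pi: \sP \to L$ and the cobordism $\sW$: surjectivity of $\pi_*$ together with $\psi^1 \circ \pi \simeq \pi$ (Lemma \ref{Moduli Space Works}) gives part (1), while injectivity of $\pi^*$ --- which requires $T\sW \simeq (\Ind - \pi^*TL) \oplus \pi^*TL \oplus \R$ to be $R$-orientable, hence the extra hypothesis on $TL$ --- gives part (2). You instead treat Theorem \ref{Main Theorem} as a black box: part (1) is essentially the same content repackaged, but for part (2) you invoke Poincar\'e duality on $L$ itself. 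That is a legitimate alternative; the cleanest phrasing is the naturality identity $(\psi^1)_*\left((\psi^1)^* x \cap [L]\right) = x \cap (\psi^1)_*[L]$, which together with part (1) forces $(\psi^1)^* x \cap [L] = x \cap [L]$ and hence $(\psi^1)^* x = x$, so the ``orientation-preserving'' step is really just an instance of part (1) and need not be isolated. Your ``sanity check'' via the free--forgetful adjunction $[\Sigma^\infty_+ L \wedge R, \Sigma^* R]_R \cong R^*(L)$ is a worthwhile observation in its own right: it shows that the $R$-orientability hypothesis on $TL$ in part (2) is in fact superfluous given the $R$-module-level statement of Theorem \ref{Main Theorem}, whereas it is genuinely needed by the paper's route through the Injectivity Corollary (which lives at the level of spectra, before smashing with $R$, and needs $\sW$ itself to be $R$-orientable).
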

            We will deduce Theorem \ref{Main Theorem} from the following sequence of lemmas.\par 
            In Section \ref{Moduli Section}, we will define a moduli space $\sP$ of holomorphic discs in $M$, with moving boundary conditions, lying naturally inside $\sD_0$. Its tangent space $T\sP$ will be stably isomorphic to $\Ind$. We will prove that it satisfies the following lemmas:
            \begin{LL}
                $\sP$ is a closed smooth manifold of dimension $n$.
            \end{LL}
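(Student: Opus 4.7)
The plan is to verify three properties of $\sP$ in turn: a smooth manifold structure, the dimension equalling $n$, and compactness.

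For smoothness and the dimension count, I would first ensure $J$ is chosen generically within the admissible class so that the linearised $\bar\partial_J$ operator with the moving Lagrangian boundary condition and the constraint $u(1) \in L$ defining $\sD_0$ is surjective at every element of $\sP$; this is the standard Sard--Smale transversality argument, as in McDuff--Salamon. The implicit function theorem then equips $\sP$ with the structure of a smooth manifold of dimension equal to the Fredholm index of this linearisation. Since the linearisation represents $\Ind$ by construction, the dimension is the virtual rank of $\Ind$, which is $n$ by a direct Riemann--Roch computation: the Maslov contribution vanishes thanks to relative exactness of $L$ combined with the fact that the moving boundary family is generated by a Hamiltonian isotopy preserving $L$ setwise, and the remaining topological term on the disc contributes $n$.

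For compactness, I would apply Gromov compactness to a sequence $u_k \in \sP$. A priori energy bounds hold because $\psi^t$ is compactly supported, and in each Liouville factor of $M$ a standard maximum principle confines the discs to a compact region. The only obstructions to smooth convergence are sphere bubbles in $M$ or disc bubbles with boundary on some $\psi^{t_0}(L)$. Because $\psi^{t_0}$ is a symplectomorphism, each $\psi^{t_0}(L)$ inherits relative exactness from $L$, so $\omega$ vanishes on $\pi_2(M,\psi^{t_0}(L))$; any $J$-holomorphic disc with such a boundary therefore has zero symplectic area and is constant, ruling out disc bubbles. The long exact sequence of the pair $(M,L)$ further shows $\omega$ vanishes on $\pi_2(M)$, so sphere bubbles cannot form either. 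Hence a subsequence of $u_k$ converges smoothly to a limit still satisfying the closed $C^0$-conditions defining $\sD_0$, giving an element of $\sP$, and $\sP$ is compact.

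The substantive point is the exclusion of bubbling in the compactness argument, which relies on relative exactness of $L$ passing through the entire moving Lagrangian family --- this is automatic from $\psi^t$ being symplectic but is the essential input. Once bubbling is ruled out, the remaining ingredients (transversality, Fredholm index) are routine, and together they give that $\sP$ is a closed smooth manifold of dimension $n$.
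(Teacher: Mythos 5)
Your transversality and compactness steps are reasonable in outline (and the bubbling exclusion via relative exactness of $\psi^{t_0}(L)$ is the right idea, though the paper handles Gromov compactness for the moving boundary conditions via the graph construction in $M \times \C$ rather than appealing to it directly). But the dimension argument has a genuine gap.

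You claim the dimension of $\sP$ is $n$ because ``the Maslov contribution vanishes thanks to relative exactness of $L$.'' This conflates two different structures: relative exactness constrains $\omega$ on $\pi_2(M,L)$, while the Maslov contribution to the index is governed by the Maslov class $\mu$, and these are independent. A relatively exact Lagrangian can perfectly well support disc classes with nonzero Maslov index, so the full moduli space $\sU_{f_{c'}, C_{c'}}(J_{c'})$ may have components of various dimensions --- the paper explicitly remarks that ``$\sW'$ may have had different path components of different dimensions.'' Your argument, applied as stated, would assign dimension $n$ to all of them, which is false.

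The actual reason $\sP$ has dimension $n$ is built into its definition, which your proof never engages with. In the paper, $\sW$ is defined to be the path component of $\sU_0 \cong L$ inside the parametrised moduli space $p^{-1}[0,c'] \subset \sW'$, and $\sP := \sW \cap \sU_{c'}$ is a regular level set inside $\sW$. Since $\sU_0$ consists precisely of the constant discs and $\pi: \sU_0 \to L$ is a diffeomorphism (this is where relative exactness genuinely enters: it forces the $l=0$ discs to be constant), this boundary component has dimension $n$; the dimension of a connected smooth manifold is constant, so $\sW$ has dimension $n+1$ and its regular slice $\sP$ has dimension $n$. Closedness similarly comes from properness of $p: \sW' \to \R$ (proved via the graph construction and Gromov compactness) together with $c'$ being a regular value. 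In short: the index equals $n$ on $\sP$ by deformation invariance along the cobordism to $L$, not by a pointwise Maslov vanishing; and you need to identify $\sP$ with this specific path component for the claim to be true at all.
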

            and
            \begin{LL}\label{Moduli Space Works}
                The following diagram commutes up to homotopy: 
                \[\begin{tikzcd}
                    & L \arrow[dd, "\psi^1"]\\
                    \sP \arrow[ur, "\pi"] \arrow[dr, "\pi"] \\
                    & L \\
                \end{tikzcd}\]
            \end{LL}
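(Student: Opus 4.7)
The plan is to exhibit an explicit homotopy from $\pi$ to $\psi^1 \circ \pi$ built out of the boundary arcs of the discs parameterised by $\sP$. I expect that the moving Lagrangian boundary conditions used to define $\sP$ will, after an appropriate reparametrisation of the boundary circle, take the form
\[ u(e^{2\pi i s}) \in \psi^{s}(L) \text{ for all } s \in [0,1], \]
which is the natural family compatible with the data: it starts and ends at $L$ (using $\psi^1(L) = L$) and its monodromy around $\partial D^2$ is exactly $\psi^1$. The marked point $1 \in \partial D^2$ corresponds to both $s = 0$ and $s = 1$, which is consistent because $\psi^t$ is constant in $t$ near $0$ and $1$, so the condition $u(1) \in L$ is unambiguous.

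For each $u \in \sP$, I would define a path $\gamma_u : [0,1] \to L$ by
\[ \gamma_u(s) = (\psi^{s})^{-1}\bigl(u(e^{2\pi i s})\bigr). \]
This is well-defined by the boundary condition, depends continuously (in fact smoothly) on $u \in \sP$, and at the endpoints satisfies $\gamma_u(0) = u(1) = \pi(u)$ and $\gamma_u(1) = (\psi^1)^{-1}(u(1)) = (\psi^1)^{-1}(\pi(u))$. I then set $H : \sP \times [0,1] \to L$ by $H(u, s) = \psi^1\bigl(\gamma_u(1 - s)\bigr)$; a direct check gives $H(u, 0) = \psi^1((\psi^1)^{-1}(\pi(u))) = \pi(u)$ and $H(u, 1) = \psi^1(\pi(u))$, so $H$ is the required homotopy. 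Continuity of $H$ in $(u, s)$ follows from smoothness of $\psi^t$ in $t$ together with continuity of the boundary evaluation map on $\sP$.

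The main content of the lemma is therefore effectively absorbed into the definition of $\sP$: once the moving boundary conditions are arranged so that the boundary of each disc sweeps once through the family $\{\psi^s(L)\}_{s \in [0,1]}$ starting and ending at the marked point $1$, the two compositions $\pi$ and $\psi^1 \circ \pi$ are tautologically homotopic via $H$. I don't anticipate any significant obstacle in this step, since the argument uses no analytic input at all beyond the existence of the boundary condition; the real technical work — compactness of $\sP$, its smooth manifold structure, and the identification $T\sP \cong \Ind$ — belongs to the companion lemmas constructing $\sP$, not to this verification.
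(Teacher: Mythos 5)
There is a genuine gap here, and it stems from a misidentification of the boundary conditions defining $\sP$. In the paper, $\sP$ sits inside $\sU_{f_{c'}, C_{c'}}(J_{c'})$ where the domain $C_{c'}$ is a (long) convex region and the boundary function is $f_{c'}(z) = \im z$: as you traverse $\partial C_{c'}$ starting at $0$, the value of $f$ rises from $0$ up to $1$ at $z = i$ and then falls back to $0$. It does \emph{not} wind once around $S^1 = \R/\Z$, so there is no reparametrisation of $\partial C_{c'}$ giving $u(e^{2\pi i s}) \in \psi^s(L)$ with a single marked point where $s = 0$ is glued to $s = 1$. In particular $\pi(u) = u(0)$ and the point $i$ (where $f = 1$) are genuinely distinct boundary points, lying on opposite sides of the long strip. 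Your homotopy $\gamma_u$, transported to this setting, only connects $u(i)$ to $\psi^1(u(0))$ — it gives $\psi^1 \circ \pi \simeq \pi'$, where $\pi'(u) := u(i)$. What is missing is the step $\pi \simeq \pi'$, i.e.\ showing $u(0)$ and $u(i)$ can be joined canonically, and that step is precisely the content of Lemma \ref{Hofer Bound}: for $l \geq c$, $d(u(i), u(0)) < \varepsilon$ (the injectivity radius), which follows from a Gromov-compactness/strip-stretching argument and uses relative exactness. This is analytic input, not a tautology absorbed into the definition.

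The misidentification is also not harmless as an alternative route: if one tried to \emph{redefine} $\sP$ with $S^1$-wrapping boundary conditions, the 1-parameter family $f_l$ (interpolating from $f_0 \equiv 0$ to $f_{c'}$) would no longer make sense, and the cobordism $\sW$ from $\operatorname{Id}_L$ to $\pi : \sP \to L$ — which the surrounding argument for Theorem \ref{Main Theorem} depends on — would not exist, since a loop winding once around $S^1$ cannot be deformed through such loops to a constant one. So the ``go up and come back down'' shape of $f_{c'}$ is forced, and the closeness of $u(0)$ and $u(i)$ is the real work. Your second half (the boundary-transport homotopy applying $(\psi^{\im \gamma(t)})^{-1}$ along a path in $\partial C_{c'}$ and then $\psi^1$) matches the paper's step from $\pi'$ to $\psi^1 \circ \pi$, so you have correctly reconstructed roughly half the proof, but the lemma is false without Lemma \ref{Hofer Bound}.
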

            Furthermore, by varying the boundary conditions in a 1-parameter family, we will construct a cobordism $\pi: \sW \to L$ from $Id: L \rightarrow L$ to $\pi: \mathcal{P} \rightarrow L$ over $L$, such that $T\sW - \pi^*TL$ will be $R$-orientable under Assumption \ref{Orientation Assumption}. A standard application of the Pontryagin-Thom construction (see Section \ref{Spectra Section}) will prove the following lemma:
            \begin{LL}\label{aqws}
                The induced map $\pi: \Sigma^\infty_+ \mathcal{P} \wedge R \rightarrow \Sigma^\infty_+ L \wedge R$ admits a section of $R$-module maps (up to homotopy of $R$-module maps).
            \end{LL}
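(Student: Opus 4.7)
Proof proposal.

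The plan is a Pontryagin--Thom umkehr argument, applied twice: first to $\pi : \sP \to L$ to construct a candidate section, and then to the cobordism $\sW$ to show that the candidate really is a section up to $R$-module homotopy.

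\emph{Step 1: Building the section.} Fix a smooth embedding $\iota : \sP \hookrightarrow \R^N$ for $N$ large and combine it with $\pi$ to form an embedding $(\pi, \iota) : \sP \hookrightarrow L \times \R^N$. The normal bundle $\nu$ of this embedding is stably isomorphic to $\underline{\R^N} - (T\sP - \pi^*TL)$; since $T\sP$ is stably isomorphic to $\Ind$, Assumption \ref{Orientation Assumption} makes $\nu$ an $R$-orientable rank-$N$ bundle. The Pontryagin--Thom collapse $\Sigma^N L_+ \to \mathrm{Th}(\nu)$, followed by the $R$-module Thom isomorphism $\mathrm{Th}(\nu) \wedge R \simeq \Sigma^N \sP_+ \wedge R$ determined by a chosen $R$-orientation, produces an $R$-module map
\[ s : \Sigma^\infty_+ L \wedge R \longrightarrow \Sigma^\infty_+ \sP \wedge R, \]
our candidate section.

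\emph{Step 2: Verifying the section property.} To show $\pi \circ s \simeq \mathrm{id}_L$ as $R$-module maps, I would extend $\iota$ to a proper embedding $\iota_\sW : \sW \hookrightarrow \R^N \times [0,1]$, collared near the boundary, so that $(\pi, \iota_\sW) : \sW \hookrightarrow L \times \R^N \times [0,1]$ restricts at $t = 0$ to the identity embedding $L \hookrightarrow L \times \{0\} \times \{0\}$ and at $t = 1$ to the Step 1 embedding of $\sP$. The normal bundle is stably $\underline{\R^N} - (T\sW - \pi^*TL)$, which is $R$-orientable by the hypothesis on $\sW$, and the orientation can be chosen to restrict at the ends to those already fixed. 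The PT collapse, Thom-trivialised over $R$ and post-composed with pushforward along $\pi : \sW \to L$, gives an $R$-module map
\[ \Sigma^\infty_+ L \wedge [0,1]_+ \wedge R \longrightarrow \Sigma^\infty_+ L \wedge R, \]
whose endpoints work out to $\mathrm{id}_L$ at $t = 0$ (the PT construction applied to the identity embedding with trivial normal bundle is the identity, and $\pi|_L = \mathrm{id}_L$) and to $\pi \circ s$ at $t = 1$.

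The main subtlety is coherence of $R$-orientations across the cobordism: one must verify that a single $R$-orientation of the normal bundle of $(\pi, \iota_\sW)$ can be chosen to restrict at $t = 0$ and $t = 1$ to the orientations fixed in Step 1 and in the trivial case, respectively. This follows because the space of $R$-orientations of an $R$-orientable bundle is a torsor over a simplicial group of units and hence orientations always extend along the collared boundary of $[0,1]$. Beyond this coherence issue, the proof is a direct and entirely standard application of stable Pontryagin--Thom machinery for $R$-orientable virtual bundles over finite CW complexes, taking as input only the previously established facts that $\sP$ is a closed $n$-manifold, that $\sW$ is a cobordism over $L$ from $\mathrm{id}_L$ to $\pi$, and that the virtual bundles $T\sP - \pi^*TL$ and $T\sW - \pi^*TL$ are $R$-orientable.
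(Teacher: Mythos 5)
Your argument is correct and is essentially the same as the paper's: the paper simply cites Lemma \ref{Cobordism Implies Section}, whose proof is exactly the Pontryagin--Thom umkehr construction through the cobordism $\sW$ that you spell out. The only minor difference is that the paper shows the composition $p_f = \pi \circ s$ is an equivalence (being homotopic over the cobordism to $p_{\mathrm{Id}}$, a composite of equivalences) and takes $s \circ p_f^{-1}$ as the section, which sidesteps the orientation-coherence point you flag in Step 2; your direct claim that $\pi \circ s \simeq \mathrm{id}$ is also attainable, but is most cleanly arranged by first fixing the $R$-orientation on $\nu_F$ over all of $\sW$, restricting to get the orientations at the two ends, and using the retraction $\pi \colon \sW \to L$ to normalise the restriction at $t=0$ by a unit.
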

            \begin{proof}
                This follows from the construction of $\sW$ in Section 4, along with Lemma \ref{Cobordism Implies Section}.
            \end{proof}
            \begin{proof}[Proof of Theorem \ref{Main Theorem}]
                Let $s$ be the section from Lemma \ref{aqws}. Then from Lemma \ref{Moduli Space Works}, we have that 
                $$\psi^1 \simeq \psi^1 \circ \pi \circ s \simeq \pi \circ s \simeq Id$$
                as maps of $R$-modules $\Sigma^\infty_+L \wedge R \rightarrow \Sigma^\infty_+L \wedge R$.
            \end{proof}
            In Section \ref{Orienty Subsection}, we will show the following:
            \begin{PP}\label{Orientation Criteria}
                Assumption \ref{Orientation Assumption} holds when:
                \begin{enumerate}
                    \item $R = H\Z / 2$ representing mod-2 singular homology.
                    \item $R = H\Z$ representing integral singular homology, and $w_2(L)(x) = 0$ whenever $x$ is a homology class represented by a 2-torus $S^1 \times \partial D^2$ in $L$ which can be extended to a solid torus $S^1 \times D^2$ in $M$.
                    \item $R = KU$ representing complex $K$-theory, and $L$ admits a spin structure.
                    \item $R = KO$ representing real $K$-theory, and $TL$ admits a stable trivialisation over a 3-skeleton of $L$ which extends (after applying $\cdot \otimes \C$) to a stable trivialisation of $TM$ over a 4-skeleton of $M$.
                    \item $R$ is a complex-orientable generalised cohomology theory and $L$ is stably parallelisable.
                    \item $R = \Sphere$ is the sphere spectrum and there is a real vector bundle $E$ over $X$, along with stable isomorphisms 
                    $TM \cong E \otimes \C$ and $TL \cong E|_L$ compatible with each other.
                \end{enumerate}
            \end{PP}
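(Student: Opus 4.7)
My plan is to topologically identify the virtual bundle $V := \Ind - \pi^* TL$ over $\sD_0$ and then check the six $R$-orientability criteria case by case via standard obstruction theory.

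For the identification, I would use the standard description of the index of a Cauchy--Riemann operator with moving Lagrangian boundary conditions: trivialising $u^*TM$ symplectically over the contractible disc $D^2$ converts the boundary data into a loop in the Lagrangian Grassmannian of $\C^n$, and in families over a finite CW complex $K \subseteq \sD_0$ this gives a K-theoretic expression for $\Ind$ in terms of $TL$ and $TM$ pulled back via the boundary evaluation. After subtracting $\pi^* TL$, the $w_1(L)$ contribution cancels, so the first obstruction to $R$-orienting $V$ is controlled by $w_2(L)$. Concretely, the claim I would aim for is that a loop in the fibre of $\pi: \sD_0 \to L$ corresponds to an element of $\pi_3(M, L)$; its image in $\pi_2(L) \to H_2(L; \Z)$ gives a 2-sphere in $L$; and $w_1(V)$ evaluated on the loop equals $w_2(L)$ evaluated on that 2-sphere. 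Higher characteristic classes of $V$ should satisfy analogous geometric formulas mixing classes of $TL$ and $TM \otimes \C$.

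With this identification in hand, each case becomes a direct obstruction-theoretic check. Case (1) is automatic since all real virtual bundles are $H\Z/2$-orientable. Case (2) follows from the identification of $w_1(V)$ with $w_2(L)$ on spherical classes from $\pi_3(M,L)$ as above. Case (3) reduces to $L$ being spin together with spinness of the complex $TM$-contribution, the latter being automatic since $TM$ is almost complex. Cases (4), (5), (6) follow the same scheme but require climbing further up the Postnikov tower of $BO \to BGL_1(R)$, which is precisely why the orientability hypotheses on $TL$ and $TM$ become progressively stronger; the stated hypothesis in each case is exactly what is needed to kill the next obstruction, with complex-orientability of $R$ in (5) absorbing the $TM \otimes \C$ part for free, and $R = \Sphere$ in (6) demanding the full stable trivialisation.

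The main obstacle will be Step 1: establishing the explicit K-theoretic description of $\Ind$ under the symplectic trivialisation and matching the resulting geometric formulas for characteristic classes of $V$ against the hypotheses in the six cases. This demands careful bookkeeping of the Cauchy--Riemann index in families, particularly at the basepoint $1 \in \partial D^2$ where the boundary condition is pinned to $TL$, together with a clean geometric interpretation of $\pi_1$ of the fibres of $\pi$. Once that is done, the case-by-case verifications amount to standard computations with the Atiyah--Hirzebruch spectral sequence and the theory of $R$-orientations of virtual bundles.
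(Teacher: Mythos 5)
Your overall plan—symplectically trivialise $u^*TM$ over $D^2$, reduce to a loop in $U(n)/O(n)$, and then run a case-by-case orientability check—is the right starting point and matches the paper's first move. However, the framework you propose for the case analysis has a genuine gap that affects cases (3) and (5), and to a lesser extent (4).

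For cases (3) and (5), the paper's essential technical input is not a characteristic-class or Postnikov-tower computation but a structural result: a stable trivialisation of the real part $F$ of a bundle pair $(E,F)$ induces a \emph{stable complex structure} on $\Ind(E,F)$ (Proposition \ref{Complex Orientability}). This is proved using de Silva's theorem that $\Ind: [\Sigma X, U/O]_* \to [X, BO]_*$ is a bijection, together with the fibration $BU \to BO \to \Omega BO$: the trivialisation of $F$ gives a nullhomotopy of the composite $\Sigma X \to U/O \to BO$, which lifts the classifying map of $\Ind$ to $BU$. For (5), complex-orientability of $R$ only buys you $R$-orientability of \emph{complex} bundles, so you genuinely need this lift to $BU$; ``killing the next obstruction'' in $BO \to BGL_1(R)$ does not produce a stable complex structure, and your remark that complex-orientability ``absorbs the $TM \otimes \C$ part for free'' misses that the complex structure is on $\Ind$ itself, not a contribution to be absorbed. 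For (3), the paper's chain is: $L$ spin $\Rightarrow$ $TL$ stably trivial over a 3-skeleton $\Rightarrow$ (by the above) $\Ind$ has a stable complex structure over a 2-skeleton $\Rightarrow$ $\Ind$ is spin$^c$ $\Rightarrow$ $KU$-orientable by Atiyah--Bott--Shapiro; your proposed route via ``spinness of the complex $TM$-contribution being automatic since $TM$ is almost complex'' is not the mechanism and does not address the index bundle directly.

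For case (2), the target you set (equate $w_1(\Ind - \pi^*TL)$ on loops in the fibre of $\pi$ with $w_2(L)$ on the associated 2-spheres from $\pi_3(M,L)$) is exactly the right statement, but it is a nontrivial index computation which the paper outsources to Georgieva's theorem rather than deriving from scratch; you should be aware you would either need to cite that result or reprove it. Cases (1) and (6) are straightforward as you say (canonical $H\Z/2$-orientation; stably trivial bundle pair implies stably trivial index bundle). Case (4) in the paper goes via producing a stable trivialisation of $\Ind$ over a 2-skeleton (by homotoping the evaluation map into low skeleta of $(M,L)$) and then Atiyah--Bott--Shapiro for $KO$, which is again a somewhat different route than a pure obstruction-theoretic computation.

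In short: the missing ingredient is the recognition that the hard cases hinge on equipping $\Ind$ with a stable complex structure via de Silva's identification of index bundles with $[\Sigma X, U/O]_*$, rather than on vanishing of successive obstructions in $BGL_1(R)$.
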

            \begin{RR}
                Our methods show that when $R = H\Z$, Assumption \ref{Orientation Assumption} holds if $L$ is spin. We will deduce the stronger statement (2) from a theorem of Georgieva (\cite[Theorem 1.1]{Georgieva}).
            \end{RR}
            \begin{CC}\label{sphere}
                When $L$ is a homotopy sphere, $\psi^1|_L$ is homotopic to the identity.
            \end{CC}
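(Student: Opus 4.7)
The plan is to apply Theorem \ref{Main Theorem} with $R = H\Z$ and then use the fact that a self-map of a sphere is determined up to homotopy by its degree.

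First, I would verify that Assumption \ref{Orientation Assumption} holds for $R = H\Z$ by invoking Proposition \ref{Orientation Criteria}(2). This reduces to checking that $w_2(L)$ evaluates to zero on the image of the composition $\pi_3(M, L) \to \pi_2(L) \to H_2(L; \Z)$. If $L$ is a homotopy $n$-sphere with $n \neq 2$ then $\pi_2(L) = 0$ and the condition is vacuous; if $n = 2$ then $L \simeq S^2$ and $w_2(L) = 0$ (since the Euler number of $TS^2$ is $2$, and $w_2$ is its mod-$2$ reduction in this case). Either way the hypothesis holds.

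Applying the corollary to Theorem \ref{Main Theorem} concerning $R_*(L)$, the induced map $(\psi^1|_L)_*$ is then the identity on $H_*(L; \Z)$. In particular it acts as $+1$ on $H_n(L; \Z) \cong \Z$, so $\psi^1|_L$ has degree one as a self-map of $L$.

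Finally, since $L$ is a homotopy sphere, choosing a homotopy equivalence $L \simeq S^n$ identifies $[L, L]$ with $[S^n, S^n] \cong \Z$ via degree (Hopf's theorem). A degree-one self-map is therefore homotopic to the identity, completing the argument. The only nontrivial input is the $w_2$ case analysis in the first step, which is also the place where upgrading from $H\Z/2$ (where Theorem \ref{HLL theorem} of Hu--Lalonde--Leclercq would only yield information mod $2$) to $H\Z$ is essential; the remainder is formal.
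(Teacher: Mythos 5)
Your argument is correct and follows essentially the same route as the paper: apply Theorem \ref{Main Theorem} with $R = H\Z$ to conclude that $\psi^1|_L$ acts as the identity on $H_*(L;\Z)$, hence has degree one, hence is homotopic to the identity by Hopf's theorem. The only minor divergence is in how the orientation hypothesis is verified: the paper simply observes that a homotopy sphere admits a spin structure (hence $\Ind - \pi^*TL$ is $H\Z$-orientable by the corollary in Section \ref{Orienty Subsection}), whereas you verify Proposition \ref{Orientation Criteria}(2) directly, splitting into the cases $n \neq 2$ (where $\pi_2(L) = 0$ makes the condition vacuous) and $n = 2$ (where $w_2(S^2) = 0$). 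Both verifications are valid and lead to the same conclusion; the spin-structure observation is marginally cleaner since it dispenses with the case analysis in one stroke, but your reasoning is sound.
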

            \begin{proof}
                $L$ admits a spin structure, so $\psi^1$ acts as the identity on integral homology and so is homotopic to the identity.
            \end{proof}
            It is nontrivial to find examples of self-diffeomorphisms of spin manifolds which act trivially on integral cohomology but non-trivially on complex $K$-theory, so we provide an example in Appendix \ref{K-theory Example}, following a suggestion of Randal-Williams (\cite{R-W}). We furthermore provide an example in Appendix \ref{real K-theory Example} of a self-diffeomorphism of a manifold which acts trivially on integral cohomology but non-trivially on real $K$-theory.
        \subsection{Extensions to the free loop space}
            Fix a ring spectrum $R$. We will prove an analogue of Theorem \ref{Main Theorem} for the free loop space $\sL L$ of $L$, the space of maps $S^1 \rightarrow L$. Once again, we will need to make an assumption in order to orient our moduli spaces.
            \begin{TT}\label{Looped Theorem}
                Assume Assumption \ref{Looped Orientation Assumption} holds. Then the induced map
                $$\psi^1|_L: \Sigma^\infty_+ \sL L \wedge R\rightarrow \Sigma^\infty_+ \sL L \wedge R$$ 
                is homotopic to the identity, as maps of $R$-modules.
            \end{TT}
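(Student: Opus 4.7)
The plan is to adapt the three-step strategy of Theorem \ref{Main Theorem} from $L$ to $\sL L$, constructing an analogous moduli space mapping to $\sL L$, a relative cobordism over $\sL L$, and invoking the Pontryagin--Thom construction. The two new difficulties are identifying the right moduli space (since the boundary of a disc with moving Lagrangian boundary condition naturally produces a $\psi^1$-twisted loop in $L$ rather than a genuine free loop) and handling the infinite-dimensionality of $\sL L$.

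For the moduli space, I would couple $\sP$ with $\sL L$: set $\sP_{\sL L} := \sP \times_L \sL L$, where the fibre product uses the evaluation map $\pi: \sP \to L$ from Section \ref{Moduli Section} and the basepoint evaluation $\text{ev}_0: \sL L \to L$. A point is a pair $(u, \gamma)$ of a disc together with a free loop based at $\pi(u)$, and the projection $\pi_{\sL L}: \sP_{\sL L} \to \sL L$ forgets $u$. The analogue of Lemma \ref{Moduli Space Works}, a homotopy $\psi^1_* \circ \pi_{\sL L} \simeq \pi_{\sL L}$ of maps $\sP_{\sL L} \to \sL L$, would be built by combining the homotopy $H: \sP \times [0, 1] \to L$ from $\pi$ to $\psi^1 \circ \pi$ of Lemma \ref{Moduli Space Works} with $\gamma$: concretely, the path $s \mapsto H(u, s)$ conjugates $\gamma$ into a free loop based at $\psi^1(\pi(u))$ and produces a canonical free homotopy in $L$ from $\gamma$ to $\psi^1 \circ \gamma$, natural in $(u, \gamma)$.

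For the cobordism I would take $\sW_{\sL L} := \sW \times_L \sL L$, pulled back from the cobordism $\sW$ of Section 4 along $\text{ev}_0$. Its relative virtual tangent bundle is pulled back from $T\sW - \pi^* TL$, hence $R$-orientable under Assumption \ref{Looped Orientation Assumption} (the loop-space analogue of Assumption \ref{Orientation Assumption}). Invoking Lemma \ref{Cobordism Implies Section} then yields a Pontryagin--Thom section $\Sigma^\infty_+ \sL L \wedge R \to \Sigma^\infty_+ \sP_{\sL L} \wedge R$ up to homotopy, which combined with the homotopy of the previous paragraph gives $\psi^1_* \simeq \text{Id}$ after smashing with $R$.

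The hard part will be the two difficulties above. First, producing the homotopy $\psi^1_* \circ \pi_{\sL L} \simeq \pi_{\sL L}$ requires choosing the conjugation-style construction so that it is natural in $(u, \gamma)$ and compatible with the fibre-product structure; the subtlety is that the boundary of a disc in $\sP$ is intrinsically a $\psi^1$-twisted loop, so some care is needed to pass from $\sL_{\psi^1} L$ to $\sL L$. Second, Lemma \ref{Cobordism Implies Section} does not apply verbatim because $\sL L$ is infinite-dimensional; however, since $\sW_{\sL L}$ is pulled back from the finite-dimensional $\sW$, one can apply Lemma \ref{Cobordism Implies Section} to the restriction over each finite CW subcomplex $K \hookrightarrow \sL L$ and assemble the resulting sections functorially along the skeletal filtration of $\sL L$, producing the required map of spectra.
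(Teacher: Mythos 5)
There is a genuine gap at the heart of the proposal. You claim that the path $s \mapsto H(u,s)$ ``produces a canonical free homotopy in $L$ from $\gamma$ to $\psi^1 \circ \gamma$,'' but dragging the basepoint of $\gamma$ along a path $\alpha: \pi(u) \rightsquigarrow \psi^1(\pi(u))$ only gives a free homotopy from $\gamma$ to the basepoint-conjugated loop $\alpha^{-1} \cdot \gamma \cdot \alpha$, \emph{not} to $\psi^1 \circ \gamma$. These are the same free loop exactly when $(\psi^1)_*$ fixes the conjugacy class of $[\gamma]$ in $\pi_1(L)$ --- which is part of what the theorem is supposed to establish ($H_0(\sL L;\Z/2)$ is the free module on $\pi_0(\sL L)$, so triviality of the action on $\pi_0(\sL L)$ is a corollary of Theorem \ref{Looped Theorem}, not an available input). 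In other words, the fibre product $\sP \times_L \sL L$ does not come with a homotopy from $\pi_{\sL L}$ to $\psi^1 \circ \pi_{\sL L}$; the existence of such a homotopy is the whole difficulty, and the rest of your argument (the pulled-back cobordism $\sW \times_L \sL L$, the Thom collapse over finite skeleta) would only produce a section of $\pi_{\sL L}$, which buys nothing without it.

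The paper sidesteps this precisely by passing to the mapping torus $L_{\psi^1}$: it decomposes the based-over-$L$ loop space $\sL$ of $L_{\psi^1}$ by winding number into pieces $\sL_j$, identifies $\sL_0 \simeq \sL L$, and observes (Lemma \ref{Monodromy is trivial on L1}) that on the winding-number-$\pm 1$ pieces the monodromy $\Psi^1$ \emph{is} homotopic to the identity by an elementary rotation --- a homotopy that has no analogue on $\sL_0$. One then multiplies against the fundamental classes of the half-strip moduli spaces $\sN, \sM$ using a Chas--Sullivan module structure; Lemmas \ref{Product Is Intersection}, \ref{Gluing}, \ref{Composition Is A Unit} show the resulting self-map of $\Sigma^\infty_+ \sL_0 \wedge R$ is an equivalence factoring through $\sL_1$ and intertwining $\Psi^1$, forcing $\Psi^1 \simeq \mathrm{Id}$ on $\sL_0$. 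Your $\sP \times_L \sL L$ construction corresponds roughly to the product $[\sQ]\cdot$ without the intermediate factoring through $\sL_1$, but that intermediate step is exactly where the non-trivial homotopy comes from, and omitting it leaves the argument circular. (As a secondary point, the orientation you would actually need for the cobordism is Assumption \ref{Orientation Assumption}, not \ref{Looped Orientation Assumption}, which in the paper is a different condition tailored to $\sM$, $\sN$, $\sQ$ and the gluing isomorphism of Lemma \ref{F}.)
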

            \begin{proof}[Idea of proof]
                To prove this, we will define $\sL_1$ to be the space of free loops in the mapping torus $L_{\psi^1}$ of $\psi^1|_L$ which have winding number one over $S^1$, and whose basepoint lies in the fibre of the mapping torus over the basepoint in $S^1$. $\psi$ determines an automorphism $\Psi^1: \sL_1 \rightarrow \sL_1$, which one should think of as parallel transporting once around this mapping torus. \par 
                We will construct a map
                $$p \circ \left(\left[\sN\right] \cdot \right): \bigvee\limits_{j \in \Z} \Sigma^{\infty + j}_+ \sL_1 \wedge R \rightarrow \Sigma_+^\infty \sL L \wedge R$$
                This will use a moduli space of holomorphic discs with moving boundary conditions $\sN$, as well as Cohen and Jones' version of the Chas-Sullivan product, constructed similarly to \cite{Cohen}. We will then show:
                \begin{enumerate}
                    \item $\Psi^1$ acts as the identity on $\sL_1$ up to homotopy.
                    \item $p \circ \left(\left[\sN\right] \cdot \right)$ intertwines the actions of $\Psi^1$ and $\psi^1|_L$.
                    \item $p \circ \left(\left[\sN\right] \cdot \right)$ admits a section up to homotopy.
                \end{enumerate}
                Together, these will imply Theorem \ref{Looped Theorem}.
            \end{proof}
            \begin{CC}
                Assume Assumption \ref{Looped Orientation Assumption} holds. Then the map 
                $$\left(\psi^1|_L\right)_*: R_*(\sL L) \rightarrow R_*(\sL L)$$ 
                is the identity.
            \end{CC}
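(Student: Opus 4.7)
The plan is to deduce this corollary directly from Theorem \ref{Looped Theorem} by applying the functor $\pi_*$. By definition of $R$-homology (as recalled in Section \ref{Spectra Section}), we have
$$R_*(\sL L) = \pi_*(\Sigma^\infty_+ \sL L \wedge R),$$
and the induced map $(\psi^1|_L)_*$ on $R_*(\sL L)$ is by definition the effect on homotopy groups of the spectrum-level self-map
$$\psi^1|_L: \Sigma^\infty_+ \sL L \wedge R \rightarrow \Sigma^\infty_+ \sL L \wedge R.$$
So the only ingredient needed is the fact that homotopic maps of spectra induce identical maps on homotopy groups.

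The substantive content is therefore Theorem \ref{Looped Theorem} itself, whose proof (sketched in the idea of proof above) constructs the auxiliary moduli space $\sN$, the map $p \circ ([\sN] \cdot -)$ out of a wedge of shifts of $\Sigma^\infty_+ \sL_1 \wedge R$, verifies that this intertwines $\Psi^1$ with $\psi^1|_L$, shows $\Psi^1$ is homotopic to the identity on $\sL_1$, and produces a section. Given all of that, invoking $\pi_*$ is the entire argument.

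In short, I would simply write: apply Theorem \ref{Looped Theorem}, and pass to $\pi_*$. There is no obstacle at this step; the only non-triviality lives in the proof of the theorem itself, and in the verification (via Proposition \ref{MN Orientations}) that Assumption \ref{Looped Orientation Assumption} is indeed satisfied in the cases of interest.
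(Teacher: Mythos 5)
Your proposal is correct and is exactly the intended deduction: the paper states this corollary without a written proof precisely because it follows immediately from Theorem \ref{Looped Theorem} by applying $\pi_*$ and using that homotopic maps of spectra agree on homotopy groups. Nothing further is needed.
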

            \begin{PP}\label{MN Orientations}
                Assumption \ref{Looped Orientation Assumption} holds when
                \begin{enumerate}
                    \item $R = H\Z / 2$.
                    \item $R = H\Z$ and $L$ admits a spin structure which is preserved by $\psi^1$.
                    \item $R = KU$ and $L$ admits a spin structure which is preserved by $\psi^1$. 
                    \item $R$ is any complex-orientable generalised cohomology theory and $TL$ admits a homotopy class of stable trivialisations which is preserved under $\psi^1|_L$.
                \end{enumerate}
            \end{PP}
            From this, we can obtain strong restrictions on $\sG_L$ for certain $L$ that we could not get from Theorem \ref{Main Theorem}:
            \begin{CC}
                \begin{enumerate}
                    \item If $L$ is a $K(\pi_1(L), 1)$, $\psi^1|_L$ is homotopic to the identity.
                    \item If $n = 2$, $\psi^1|_L$ is isotopic to the identity, and hence $\sG_L$ is trivial.
                \end{enumerate}
            \end{CC}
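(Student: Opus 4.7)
The plan is to deduce part (2) from part (1) together with the classification of closed surfaces and classical surface mapping class group theory, and to prove part (1) by translating $\psi^1|_L \simeq \mathrm{id}$ into a purely group-theoretic statement about $\pi_1(L)$ and then exploiting the triviality of the $\psi^1$-action on $H_*(\sL L; \Z/2)$ supplied by the previous corollary.

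For part (2), when $n = 2$ the classification of closed connected surfaces reduces the claim to three cases. If $L = S^2$, Corollary \ref{sphere} gives $\psi^1|_L \simeq \mathrm{id}$, and since $\pi_0 \operatorname{Diff}(S^2) = \Z/2$ is detected by orientation, homotopy promotes to an isotopy. If $L = \R P^2$, I invoke the classical fact that $\pi_0 \operatorname{Diff}(\R P^2)$ is trivial. Otherwise $L$ is aspherical, so part (1) yields $\psi^1|_L \simeq \mathrm{id}$, and the Baer--Epstein theorem (for closed surfaces other than $S^2$) promotes a homotopy of self-diffeomorphisms to an isotopy. In all cases $\sG_L$ is trivial.

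For part (1), since $L = K(\pi_1(L), 1)$, we have $[L, L] = \operatorname{Hom}(\pi_1(L), \pi_1(L))/\operatorname{Inn}(\pi_1(L))$, so it suffices to show that $(\psi^1|_L)_* : \pi_1(L) \to \pi_1(L)$ is an inner automorphism. Applying the previous corollary to $R = H\Z/2$, which is allowed by Proposition \ref{MN Orientations}(1), I obtain that $\psi^1|_L$ acts trivially on $H_*(\sL L; \Z/2)$. Using the standard decomposition $\sL L \simeq \bigsqcup_{[g] \in \operatorname{Conj}(\pi_1(L))} BC(g)$, triviality on $H_0 = \Z/2[\operatorname{Conj}(\pi_1(L))]$ is equivalent to $(\psi^1|_L)_*$ fixing every conjugacy class of $\pi_1(L)$, i.e.\ being \emph{pointwise inner} (class-preserving).

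The hard part is then a purely group-theoretic step: promoting pointwise-inner to inner. For the groups arising in part (2)---fundamental groups of closed aspherical surfaces---class-preserving automorphisms are inner: this is trivial for $T^2$ and the Klein bottle (where $\pi$ is virtually abelian) and follows from results on conjugacy rigidity in hyperbolic groups (e.g.\ of Minasyan--Osin, or the surface-group case of Bogopolski--Martino--Ventura) for higher genus. For a general aspherical $L$, one exploits the refined data that $\psi^1|_L$ acts trivially on $H_*(BC(g); \Z/2)$ for each $g$: writing $(\psi^1|_L)_*(g) = h_g g h_g^{-1}$, the twisted automorphism $c \mapsto h_g^{-1} (\psi^1|_L)_*(c) h_g$ of $C(g)$ is constrained by its triviality on mod-$2$ homology of $C(g)$, and collating these constraints across all $g$ should rigidify the global automorphism to an inner one. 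This group-theoretic rigidification, rather than any further symplectic input, is the crux of part (1).
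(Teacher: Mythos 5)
Your reduction for part (1) is exactly the paper's: for a $K(\pi_1(L),1)$, homotopy classes of self-homotopy-equivalences of $L$ correspond to $\operatorname{Out}(\pi_1(L))$; $H_0(\sL L;\Z/2)$ is the free $\Z/2$-module on $\pi_0\sL L = \operatorname{Conj}(\pi_1(L))$; and Theorem \ref{Looped Theorem} with $R = H\Z/2$ therefore shows $(\psi^1|_L)_*$ is class-preserving, so the task is to promote class-preserving to inner. Your handling of part (2) then works: $S^2$ and $\R\bP^2$ are disposed of by mapping class group facts, and for closed aspherical surfaces you correctly supply the missing group-theoretic ingredient (conjugacy rigidity for surface groups, together with homotopy-implies-isotopy for surface diffeomorphisms). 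This is a sound and essentially parallel route to the paper's for $n=2$.

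The genuine gap is in part (1) for a general $K(\pi_1(L),1)$. You rightly flag that promoting pointwise-inner to inner is "the crux," but what you offer there is not an argument: the suggestion that triviality of $\psi^1|_L$ on each $H_*(BC(g);\Z/2)$ "should rigidify the global automorphism to an inner one" is speculation, and nothing in it explains how mod-$2$ homology of centralisers would force innerness --- indeed class-preserving outer automorphisms exist for some groups, so some input specific to the aspherical setting is required. The paper closes this step by citation rather than by loop-space homology in positive degrees: having extracted from $H_0(\sL L;\Z/2)$ alone that $\psi^1|_L$ fixes every free homotopy class of loops, it invokes \cite[Theorem 2.4]{Costoya-Viruel} to conclude directly that such a self-map of a $K(\pi,1)$ is homotopic to the identity. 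The missing ingredient in your write-up is precisely a result of this Costoya--Viruel type, not additional information from $H_{>0}(\sL L;\Z/2)$.
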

            \begin{proof}
                Recall that a diffeomorphism of a closed surface is homotopic to the identity iff it is isotopic to the identity. Therefore to prove (2), it suffices to prove (1) along with the special cases $L = S^2$ and $\R\bP^2$. \par\par 
                It follows from Corollary \ref{sphere} that if $L = S^2$, then $\psi^1|_L$ is isotopic to the identity. Furthermore, since the mapping class group of $\R\bP^2$ is trivial, we can assume that $L$ is a $K(\pi_1(L), 1)$.\par 
                Note that $H_0(\sL L; \Z/2)$ is the free $\Z/2$-module generated by the set of homotopy classes of free loops in $L$. Therefore by Theorem \ref{Looped Theorem}, $\psi^1|_L$ acts as the identity on this set of generators. Then by \cite[Theorem 2.4]{Costoya-Viruel}, $\psi^1|_L$ must be homotopic to the identity.
            \end{proof}
        \subsection{Extensions to other bases}
            In this subsection and Section \ref{Other Bases Section}, we restrict to the case $R = H\Z / 2$.
            \begin{DD}
                Given a fibre bundle $E \twoheadrightarrow B$, we say it \emph{c-splits} if the inclusion of a fibre into $E$ induces an injection on mod-2 singular homology, for any fibre of this bundle.
            \end{DD}
            If the fibre $F$ has $H^*(F; \Z/2)$ finite-dimensional (as will be the case below), the Leray-Hirsch theorem says that this implies that
            $$H^*(E; \Z/2) \cong H^*(B; \Z/2) \otimes H^*(F; \Z/2)$$
            We define $Lag_L$ to be the space of (unparametrised) Lagrangian submanifolds of $M$ which are Hamiltonian isotopic to $L$. There is a natural fibre bundle $\sE$ over $Lag_L$, where the fibre over $K$ in $Lag_L$ is $K$. This is naturally a subbundle of $Lag_L \times M$. Given a map $\gamma: S^1 \rightarrow Lag_L$, Theorem \ref{HLL theorem} implies that the pullback bundle $\gamma^* \sE$ c-splits. We will use techniques in \cite{Lalonde-McDuff} to deduce the following from Theorem \ref{HLL theorem}:
            \begin{TT}\label{Bigger Bases}
                Let $X$ be a finite CW complex that admits a map $f: (S^1)^i \rightarrow X$ which is surjective on mod-2 homology (such as a product of spheres), and let $\gamma: X \rightarrow Lag_L$ be any map. Then the pullback bundle $\gamma^* \sE$ c-splits.
            \end{TT}
                
        \subsection{Acknowledgements}
            The author would like to thank Amanda Hirschi, Jack Smith, Nick Nho and Oscar Randal-Williams for helpful conversations regarding this project,  Ailsa Keating, Mohammed Abouzaid, Thomas Kragh, Jonny Evans, Oscar Randal-Williams and the two anonymous referees for useful comments and feedback on earlier drafts. The author would also like to express his deep gratitude to his supervisor Ivan Smith for all his support, motivation, productive discussions and ability to endure endless questions over the past few years. The author is supported by the Engineering and Physical Sciences Research Council [EP/W015889/1], and was also supported by an EPSRC PhD studentship [grant number 2261120].
    \section{Spectra and Pontryagin-Thom maps}\label{Spectra Section}
        We assume all spaces that we work with are Hausdorff, paracompact and homotopy equivalent to CW complexes. For an unbased space $X$, we will write $X_+$ for $X$ with a disjoint basepoint added.\par 
        We will use the category of spectra described in \cite{Rudyak}, whose construction and properties we will sketch here.
        \subsection{Spectra}
            \begin{DD}
                A \emph{spectrum} $X$ is a sequence $\{X_n\}_{n \in \mathbb{N}}$ of based spaces along with \emph{structure maps} $\Sigma X_n \rightarrow X_{n+1}$.\par 
                A \emph{function} $f$ from $X$ to $Y$ is a family of based maps of spaces $f_n: X_n \rightarrow Y_n$ such that the square\\
                \[\begin{tikzcd}
                    \Sigma X_n \arrow[r, "\Sigma f_n"] \arrow[d] & \Sigma Y_n \arrow[d]\\
                    X_{n+1} \arrow[r, "f_{n+1}"] & Y_{n+1}
                \end{tikzcd}\]
                commutes.
            \end{DD}
            We will not define a morphism of spectra (see \cite[\S II.1]{Rudyak} for details), but any function of spectra is a morphism of spectra, and all morphisms of spectra that we will directly construct will arise in this way.
            \begin{RR}
                Rudyak actually defines a spectrum in a slightly different way but shows the two are equivalent in \cite[Lemma II.1.19]{Rudyak}.
            \end{RR}
            \begin{DD}
                Given a spectrum $X$ and a based space $Z$, there is a spectrum $X \wedge Z$ with $(X \wedge Z)_n = X_n \wedge Z$.\par 
                A \emph{homotopy} between two functions $f, g: X \rightarrow Y$ is a function $$H: X \wedge [0,1]_+ \rightarrow Y$$
                restricting to $f$ over $\{0\}_+$ and to $g$ over $\{1\}_+$. \par 
                One can extend this definition to homotopies of morphisms of spectra, as in \cite[Definition II.1.9]{Rudyak}, and for homotopic morphisms $f$ and $g$ we write $f \simeq g$. Then for spectra $X$ and $Y$, we define $[X, Y]$ to be the set of homotopy classes of morphisms $X \rightarrow Y$. There is a natural structure of an abelian group on this set.\par 
                From this definition of a homotopy we can define a notion of \emph{(homotopy) equivalence} as with spaces, which we denote by $\simeq$.
            \end{DD}
            There is a natural functor $\Sigma^\infty$ from based spaces to $Sp$ sending a space $X$ to the spectrum with $(\Sigma^\infty X)_n = \Sigma^n X$, with structure maps the identity. We write $\Sigma^\infty_+$ for the functor from unbased spaces to $Sp$ sending $X$ to $\Sigma^\infty X_+$. These both send homotopies to homotopies. We will denote $\Sigma^\infty_+ \{*\}$ by $\mathbb{S}$, and call this the \emph{sphere spectrum}. This has $\Sphere_n = S^n$, justifying the name.\par 
            For any $N$ in $\mathbb{Z}$, there is an endofunctor $\Sigma^N$ of $Sp$ which sends $X$ to the spectrum $\Sigma^N X$ with
            $$(\Sigma^N X)_n =
            \begin{cases}
                X_{n+N} & \text{ if }n + N \geq 0\\
                \{*\} & \text{ Otherwise}
            \end{cases}$$
            This satisfies $\Sigma^N \Sigma^M X \simeq \Sigma^{N+M}X$ for all $X$, and $[X, Y] = [\Sigma^N X, \Sigma^N Y]$ for all $X, Y$. We write $\Sigma^{\infty + N}_+ X$ for $\Sigma^N \Sigma^{\infty}_+ X$.
            \begin{DD}
                Given a family $\{X_\lambda\}_{\lambda \in \Lambda}$ of spectra, we can define their \emph{wedge product} to be the spectrum $\bigvee\limits_\lambda X_\lambda$ with
                $$\left(\bigvee\limits_\lambda X_\lambda \right)_n = \bigvee\limits_\lambda X_n$$
            \end{DD}
            \begin{LL}[{\cite[Proposition II.1.16]{Rudyak}}]
                For any spectrum $Y$, there are natural isomorphisms
                $$\left[\bigvee\limits_\lambda X_\lambda, Y \right] \cong \prod\limits_\lambda \left[ X_\lambda, Y \right]$$
                and
                $$\left[Y, \bigvee\limits_\lambda X_\lambda \right] \cong \bigoplus \limits_\lambda \left[Y, X_\lambda \right]$$
            \end{LL}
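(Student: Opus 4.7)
The plan is to derive both isomorphisms from level-wise universal properties of wedge sums of pointed spaces, with the direct-sum case requiring an additional compactness argument. The work goes naturally into three steps: (a) handle the first isomorphism by a direct universal-property argument, (b) establish the second isomorphism for \emph{finite} index sets via the stable splitting of finite wedges, then (c) reduce the infinite case to the finite case via a factorisation argument.

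For step (a), the observation is that at the level of spaces $(\bigvee_\lambda X_\lambda)_n = \bigvee_\lambda (X_\lambda)_n$, with structure maps inherited by wedging those of the summands. A function $f: \bigvee_\lambda X_\lambda \to Y$ is by definition a compatible family of pointed maps $\bigvee_\lambda (X_\lambda)_n \to Y_n$, and by the universal property of the wedge sum as the coproduct in pointed spaces, these are in natural bijection with compatible families $\{f_\lambda: X_\lambda \to Y\}$. Since $(\bigvee_\lambda X_\lambda) \wedge [0,1]_+$ is identified with $\bigvee_\lambda (X_\lambda \wedge [0,1]_+)$, the same argument shows that homotopies out of the wedge also split, giving the first isomorphism at the level of functions; passing from functions to morphisms is then routine given the setup in Rudyak.

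For step (b), when $\Lambda$ is finite one also has projections $\bigvee_\mu X_\mu \to X_\lambda$ obtained by collapsing the other summands, and the resulting map from the finite wedge to the finite product is a stable equivalence. Together with the inclusions $i_\lambda: X_\lambda \hookrightarrow \bigvee_\mu X_\mu$, this gives $[Y, \bigvee_{\lambda \in F} X_\lambda] \cong \prod_{\lambda \in F}[Y, X_\lambda] = \bigoplus_{\lambda \in F}[Y, X_\lambda]$ for any finite $F \subseteq \Lambda$. For step (c), the remaining claim is that every morphism $f: Y \to \bigvee_\lambda X_\lambda$ is homotopic to one factoring through a finite sub-wedge, after which one takes the colimit over finite $F \subseteq \Lambda$ of the isomorphisms in (b).

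The main obstacle is the finite-factorisation step. The natural approach is to replace $Y$ by an equivalent CW spectrum and argue cell-by-cell: the image of each (compact) cell of $Y_n$ in $\bigvee_\lambda (X_\lambda)_n$ meets only finitely many summands. The subtlety is to do this coherently across all cells and all levels simultaneously, since the structure maps of $\bigvee_\lambda X_\lambda$ can in principle mix summands across levels; a diagonal/induction argument over a countable CW structure is needed to pick a single finite $F$ (or at worst a countable $F$) supporting $f$ up to homotopy. This is precisely the kind of smallness argument that Rudyak's framework is set up to handle, so in practice I would just cite \cite[Proposition II.1.16]{Rudyak} as the author does.
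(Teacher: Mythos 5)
The paper gives no proof of this lemma — it is cited verbatim from Rudyak — and your proposal ultimately does the same in its final sentence, so the ``proofs'' coincide. However, since you sketched an argument, it is worth flagging that your step (c) contains a genuine gap rather than a mere technical subtlety. The claim that every morphism $f: Y \to \bigvee_\lambda X_\lambda$ is homotopic to one factoring through a finite sub-wedge is false for general $Y$: take $Y = \bigvee_\lambda X_\lambda$ with $\Lambda$ infinite and $f$ the identity. More concretely, with $Y = \bigvee_{n} \Sphere$ and $X_n = \Sphere$, one has $[Y,Y] \cong \prod_n \bigoplus_m \Z$ while $\bigoplus_n [Y, X_n] \cong \bigoplus_n \prod_m \Z$, and the identity corresponds to the ``diagonal'' element of the former which does not lie in the latter. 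Your hedge ``or at worst a countable $F$'' does not help, because membership of the direct sum requires \emph{finite} support in $\lambda$. The cell-by-cell compactness argument you describe produces a single finite $F$ only when $Y$ is itself a finite CW spectrum; for such $Y$ (or for $Y = \Sigma^i\Sphere$, giving $\pi_*$) the factorisation and hence the isomorphism do hold. So the second displayed isomorphism really needs a compactness hypothesis on $Y$ that the lemma as transcribed omits. This has no downstream consequences in the paper, since every wedge used there is effectively finite (only finitely many components of the relevant moduli spaces are non-empty), but you should not present step (c) as if it goes through unconditionally.
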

            We can extend the smash product $\wedge$ to two spectra, functorially in each argument, as in \cite{Adams}. We will not need the explicit construction but will use some of its properties:
            \begin{TT}[{\cite[Theorems II.2.1 and II.2.2]{Rudyak}}]
                There are equivalences
                \begin{enumerate}
                    \item $(X \wedge Y) \wedge Z \simeq X \wedge (Y \wedge Z)$
                    \item $X \wedge Y \simeq Y \wedge X$
                    \item $X \wedge \mathbb{S} \simeq X$
                    \item $\Sigma X \wedge Y \simeq \Sigma(X \wedge Y)$
                \end{enumerate}
                such that all natural diagrams made up of these equivalences commute up to homotopy.
            \end{TT}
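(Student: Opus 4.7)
The plan is to follow the classical CW spectra approach of Boardman, which is what Rudyak uses: construct the smash product cell-by-cell on CW spectra, then verify the four equivalences and their coherence. First, I would restrict to CW spectra (which suffices since every spectrum is equivalent to a CW one), and for CW spectra $X$, $Y$ choose cofinal subspectra on which the smash product $(X \wedge Y)_n$ can be defined as a suitable colimit over smash products $X_p \wedge Y_q$ with $p + q = n$, together with structure maps inherited from those of $X$ and $Y$. The key lemma is that different choices of cofinal subspectra and different indexings produce canonically homotopy equivalent spectra, which follows from the cellular approximation theorem for CW spectra.

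Next, I would construct each of the four equivalences explicitly. Equivalence (4) is essentially built into the definition via the suspension coordinate. The unit equivalence (3) follows from $\mathbb{S}_n = S^n$ together with the identification $X_p \wedge S^q \simeq \Sigma^q X_p$ and the structure maps of $X$; one must check that the telescoped construction collapses to $X$ up to canonical equivalence. For commutativity (2), the twist map $\tau: X_p \wedge Y_q \to Y_q \wedge X_p$ on spaces induces a morphism of spectra, but only after reindexing and including sign-correction maps $S^p \wedge S^q \to S^q \wedge S^p$ (which are homotopic to the identity up to a sign $(-1)^{pq}$). For associativity (1), both $(X \wedge Y) \wedge Z$ and $X \wedge (Y \wedge Z)$ are built as colimits indexed by triples $(p, q, r)$ summing to $n$, and the identification comes from associativity of smash product of spaces.

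The main obstacle, and where I expect the bulk of the work to lie, is the coherence statement: showing that every natural diagram built from these four equivalences commutes up to homotopy. This is genuinely non-trivial in Boardman's setting because the symmetric monoidal structure is not strict. The strategy is to verify Mac Lane's pentagon (for associativity) and the two hexagons (for the interaction of associativity and commutativity), together with the unit axioms, up to homotopy. Each check reduces, after enough unpacking of the colimit description, to the corresponding strict identity for smash products of based spaces, combined with a sign computation for the twist on spheres; the sign contributions cancel in every coherence diagram because each side of any such diagram realises the same permutation on the smash factors.

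An alternative, and in many ways cleaner, route would be to exhibit an equivalence between Rudyak's homotopy category of spectra and the homotopy category of a model possessing a strictly symmetric monoidal smash product (symmetric spectra, orthogonal spectra, or $S$-modules), prove (1)--(4) strictly there together with the Mac Lane coherence axioms on the nose, and then transport the statement back. This avoids the sign bookkeeping but costs the construction of the comparison equivalence, so within the self-contained framework of the paper's reference, I would execute the direct CW verification above.
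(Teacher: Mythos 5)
The paper does not prove this statement: it is cited verbatim from Rudyak (\cite[Theorem~II.2.1]{Rudyak}) and used as a black box, so there is no ``paper's own proof'' to compare against. Your sketch is a fair outline of the Boardman/Adams-style construction that underlies Rudyak's treatment (smash product on cofinal CW subspectra, the twist with its $(-1)^{pq}$ sign on sphere coordinates, and a coherence check via the pentagon and hexagon axioms), and the alternative route through a strictly symmetric monoidal model (symmetric or orthogonal spectra, or $S$-modules) is also standard. Be aware, though, that what you have written is a roadmap rather than a proof: the genuinely hard steps --- that the colimit construction of $X\wedge Y$ is well-defined and homotopy-invariant independent of the cofinal subspectra, and that the Mac\,Lane coherence axioms actually suffice to imply ``all natural diagrams commute'' in this weak setting --- are precisely where the length of Adams's and Rudyak's arguments lies, and you have gestured at them rather than carried them out. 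Since the paper treats this as an imported foundation, that level of detail is appropriate here, but it would not constitute a self-contained proof of the theorem.
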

            \begin{DD}
                A \emph{ring spectrum} $R$ is a spectrum equipped with a unit morphism $\eta: \mathbb{S} \rightarrow R$ and product morphism $\mu: R \wedge R \rightarrow R$, satisfying appropriate associativity and unitality conditions up to homotopy.\par 
                A \emph{(right) module} over a ring spectrum is defined similarly.\par 
                A \emph{map} of $R$-modules $X \rightarrow Y$ is a map of spectra $X \rightarrow Y$ which commutes with the actions of $R$ on $X$ and $Y$ up to homotopy.\par
                A \emph{homotopy} of $R$-module maps between $f$ and $g: X \rightarrow Y$ is a homotopy of spectra $X \wedge [0, 1]_+ \rightarrow Y$ between $f$ and $g$, which is also a map of $R$-modules.
            \end{DD}
            \begin{EE}
                The sphere spectrum $\Sphere$ is naturally a ring spectrum, and every spectrum is naturally a module spectrum over it.
            \end{EE}
            \begin{EE}
                For a ring spectrum $R$, there is a functor from spectra to (right) $R$-module spectra, given by $\cdot \wedge R$.
            \end{EE}
            \begin{DD}
                We define the \emph{stable homotopy groups} of $X$ to be $$\pi_i X := [\Sigma^i \mathbb{S}, X]$$
                This is covariantly functorial in $X$.\par 
                Given a spectrum $R$, we define $R_i(X)$ to be $\pi_i(X \wedge R)$ and $R^i(X)$ to be $[X, \Sigma^i R]$. These are functorial in $X$, covariantly and contravariantly respectively.\par 
                Given a space $Z$, we define $R_*(Z)$ to be $R_*(\Sigma^\infty_+ Z)$ and $R^*(Z)$ to be $R^*(\Sigma^\infty_+ Z)$. These are functorial in $Z$, covariantly and contravariantly respectively. 
            \end{DD}
            By Brown's representability theorem (\cite[Theorem 4E.1]{Hatcher}), for an abelian group $G$ there is a (unique up to homotopy equivalence) spectrum $HG$ such that $HG_*$ and $HG^*$ are homology and cohomology with co-efficients in $G$ respectively, and when $G$ is a ring these are in fact ring spectra. Similarly there are (unique up to homotopy equivalence) ring spectra $KO$ and $KU$ such that $KO^*$ and $KU^*$ are real and complex $K$-theory respectively.
        \subsection{Duality}
            We say a spectrum $X$ is \emph{finite} if it is of the form $X = \Sigma^{\infty + i} Y$ for $Y$ a based finite CW complex, and $i$ in $\Z$. Finite spectra admit duals in the following sense.
            \begin{LL}[{\cite[Corollary II.2.9]{Rudyak}}]\label{duality}
                For a finite spectrum $X$, there is another finite spectrum $X^\vee$, unique up to natural homotopy equivalence, along with natural isomorphisms 
                $$[X, E] \cong [\Sphere, X^\vee \wedge E]$$
                and 
                $$[X^\vee, E] \cong [\Sphere, X \wedge E]$$
                for any other spectrum $E$. These induce further natural isomorphisms
                $$E_*(X) \cong E^{-*}(X).$$
            \end{LL}
            We will also use a base-change isomorphism. Let $R$ be a ring spectrum, with unit map $i: \Sphere \rightarrow R$.
            \begin{LL} \label{basechange}
                Let $X$ be a spectrum and $M$ an $R$-module, with $R$-action $\mu: M \wedge R \rightarrow M$. Then there is a natural isomorphism
                $$[X, M] \cong [X \wedge R, M]^R$$
                where $[\cdot, \cdot]^R$ denotes homotopy classes of $R$-module maps.
            \end{LL}
            \begin{proof} 
                Let $\alpha: [X, M] \rightarrow [X \wedge R, M]^R$ send $\phi$ to the composition
                $$X \wedge R \xrightarrow{\phi \wedge R} M \wedge R \xrightarrow{\mu} M$$
                and let $\beta: [X \wedge R, M]^R \rightarrow [X, M]$ send $\psi$ to the composition
                $$X \xrightarrow{X \wedge i} X \wedge R \xrightarrow{\psi} M.$$
                Then $\alpha$ and $\beta$ are inverses to each other.
            \end{proof}
            From these two lemmas we deduce the following corollary.
            \begin{CC}\label{moddual}
                For finite spectra $X$ and $Y$, there is an isomorphism
                $$[X \wedge R, Y \wedge R]^R \cong [Y^\vee \wedge R, X^\vee \wedge R]^R.$$
            \end{CC}
            \begin{proof}
                We have isomorphisms
                \begin{align*}
                    [X \wedge R, Y \wedge R]^R & \cong [X, Y \wedge R]\\
                    & \cong [Y^\vee \wedge X, R]\\
                    & \cong [Y^\vee, X^\vee \wedge R]\\
                    & \cong [Y^\vee \wedge R, X^\vee \wedge R]^R.
                \end{align*}
                The first and fourth isomorphisms are from Lemma \ref{basechange}, and the second and third isomorphisms are from Lemma \ref{duality}.
            \end{proof}
            \begin{CC}\label{dualid}
                Let $X$ be a finite spectrum and $f: X \rightarrow X$ be a map such that $f\wedge R: X \wedge R \rightarrow X \wedge R$ is homotopic to the identity as a map of $R$-modules. Then $f$ induces the identity map on both $R_*(X)$ and $R^*(X)$.
            \end{CC}
            \begin{proof}
                $R_*(X) = \pi_*(X \wedge R)$, so $f$ induces the identity map on $R_*(X)$.\par 
                When $X = Y$, the isomorphism of Corollary \ref{moddual} sends the identity to the identity, so the induced map of $R$-modules $X^\vee \wedge R \rightarrow X^\vee \wedge R$ is homotopic to the identity. But the action of this map on homotopy groups is the same as the action of $f$ on $R^*(X)$.
            \end{proof}
        \subsection{Thom spectra}
            Let $\xi: E \rightarrow X$ be a vector bundle of rank $n$ over a space $X$. We let $DE$ and $SE$ denote the unit disc and unit sphere bundles of $E$ respectively, with respect to some choice of metric.
            \begin{DD}
                We define the \emph{Thom space} of $E$, denoted $X^E_u$, to be the quotient $DE / SE$. This is a based space with basepoint given by the image of $SE$.\par 
                We define the \emph{Thom spectrum} of $E$, denoted $X^E$, to be the spectrum $\Sigma^\infty X^E_u$.\par 
                If $E'$ is a virtual vector bundle which can be written as $E' \cong E - \mathbb{R}^m_X$ for some $m$ (in particular, this includes any virtual vector bundle pulled back from one over a compact space), we define the \emph{Thom spectrum} of $E'$ to be $\Sigma^{\infty-m} X_u^E$.
            \end{DD}
            \begin{RR}\label{Thom Spectrum Is Stable}
                None of these depend on the choice of metric up to homotopy equivalence, and furthermore the Thom spectrum only depends on the stable isomorphism class of the virtual vector bundle by \cite[Lemma IV.5.14]{Rudyak}.
            \end{RR}
            Now let $\xi: E \rightarrow X$ be a virtual vector bundle of rank $n$.
            \begin{DD}
                $E$ is \emph{orientable} with respect to $R$ if there is a morphism $U: X^E \rightarrow \Sigma^{n}R$, called the \emph{Thom class}, whose restriction to a fibre (which is equivalent to a copy of $\Sigma^n \Sphere$) represents plus or minus the unit in $R$.\par 
                An \emph{orientation} is a homotopy class of such morphisms.
            \end{DD}
            It follows from Remark \ref{Thom Spectrum Is Stable} that being $R$-orientable only depends on the stable isomorphism class of $E$.
            \begin{LL} [{\cite[Proposition V.1.10 and Examples V.1.23]{Rudyak}}]\label{Orientabiliy Conditions from Rudyak}
                \begin{enumerate}
                    \item Any virtual vector bundle is canonically oriented with respect to $H\Z/2$.
                    \item A virtual vector bundle is orientable with respect to $H\Z$ iff it is orientable in the usual sense, and there is a natural bijection between $H\Z$-orientations and orientations in the usual sense.
                    \item A trivialisation of a virtual vector bundle induces a natural orientation with respect to any $R$.
                    \item If $E$ is oriented with respect to $R$ and $f: Y \rightarrow X$ is any map, the pullback bundle $f^*E$ admits a natural $R$-orientation.
                    \item If $F$ is another virtual vector bundle over $X$ and any two of $E$, $F$ and $E \oplus F$ are $R$-oriented, then the third admits a natural $R$-orientation.
                \end{enumerate}
            \end{LL}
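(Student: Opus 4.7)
The plan is to verify each of the five items directly from the definition of an $R$-orientation as a Thom morphism $U: X^E \to \Sigma^n R$ restricting to $\pm 1$ on fibers. Items (1)--(4) are essentially formal consequences of the setup, while (5) — the multiplicativity of Thom classes — carries the genuine content.

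For (1), because $\pi_k H\Z/2$ vanishes for $k \neq 0$ and equals $\Z/2$ for $k = 0$, on any fiber of $X^E_u$ there is a unique nonzero class in mod-$2$ cohomology, and the mod-$2$ Thom isomorphism produces a canonical Thom class with no sign ambiguity. For (2), a classical orientation of a rank-$n$ bundle is a coherent choice of generator of $H_n(\R^n, \R^n \setminus 0; \Z)$ on each fiber, which via the Thom isomorphism is equivalent to an integral Thom class, and naturality of this correspondence is immediate.

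For (3), if $E \cong X \times \R^n$, then $X^E \simeq \Sigma^n \Sigma^\infty_+ X$, so the composition of the collapse $\Sigma^\infty_+ X \to \Sphere$ with the unit $\eta: \Sphere \to R$, suspended $n$ times, is a Thom class restricting to $\eta$ on each fiber; the virtual trivial case reduces by shifting. For (4), the pullback square induces a morphism $Y^{f^*E} \to X^E$ of Thom spectra, and precomposing a Thom class with this morphism preserves the fiberwise condition because fibers of $f^*E$ map isomorphically to fibers of $E$.

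The main obstacle is (5). I would use the natural equivalence $X^{E \oplus F} \simeq X^E \wedge_{X_+} X^F$ together with a Thom diagonal $X^{E \oplus F} \to X^E \wedge X^F$ arising from the diagonal $X \to X \times X$. Given $U_E$ and $U_F$, composing through $\mu : R \wedge R \to R$ supplies a Thom class for $E \oplus F$, which on each fiber restricts to a smash of units and hence to the unit of $R$. The subtle step is the converse: producing an orientation of the remaining summand given orientations of the other two. This cannot be done fiber-by-fiber, and instead one uses the Thom isomorphism of the oriented summand (whose Thom class is invertible as a module map over $R \wedge X_+$) to divide through by it; the main thing to check is that the resulting class still restricts to $\pm 1$ on fibers and is well-defined up to homotopy of $R$-module maps. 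This is the piece that genuinely leans on the framework of Rudyak's Chapter V, and my proof would follow his treatment there.
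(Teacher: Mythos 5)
The paper does not prove this lemma: it is stated purely as a citation to Rudyak's book (\cite[Proposition V.1.10 and Examples V.1.23]{Rudyak}), so there is no ``paper's own proof'' to compare against. Your reconstruction is a sound sketch of the standard arguments and, for item (5), follows essentially the same route Rudyak himself takes: use the Thom diagonal $X^{E\oplus F}\to X^E\wedge X^F$ and the ring structure of $R$ to produce $U_{E\oplus F}$ from $U_E$ and $U_F$, and use the $R$-module invertibility of a given Thom class (i.e.\ the Thom isomorphism) to ``divide through'' and recover an orientation of the remaining summand, checking the fiberwise condition at the level of $\pi_0 R$.

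One small wrinkle worth flagging in item (1): you say the ``mod-$2$ Thom isomorphism produces a canonical Thom class,'' but the Thom isomorphism is a \emph{consequence} of a Thom class, so as phrased this is circular. What you actually want is: existence of a mod-$2$ Thom class comes from the classical Thom isomorphism theorem for singular cohomology (proved e.g.\ by Mayer--Vietoris or the Serre spectral sequence, independently of $R$-orientations), and then uniqueness/canonicity follows because $[\Sigma^n\Sphere,\Sigma^n H\Z/2]\cong\Z/2$ has a unique nonzero element, so the fiberwise condition admits no sign choice. With that adjustment the argument is fine, and the remaining items are handled correctly.
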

            We will need a stable version of the Thom isomorphism theorem:
            \begin{TT}[{\cite[Theorem V.1.15 and Exercise V.1.28]{Rudyak}}]
                An $R$-orientation of $E$ induces an equivalence of $R$-module spectra
                $$\Sigma^{\infty + n}_+ X \wedge R \simeq X^E \wedge R$$
                More generally, if $F \rightarrow X$ is another virtual vector bundle, then an $R$-orientation of $E$ induces an equivalence of $R$-module spectra
                $$\Sigma^{n} X^F \wedge R \simeq X^{E \oplus F} \wedge R$$
            \end{TT}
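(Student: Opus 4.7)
The plan is to construct an explicit $R$-module map
$$\Phi: X^E \wedge R \to \Sigma^{\infty + n}_+ X \wedge R$$
and verify it is an equivalence by reducing via cellular induction to the case of a trivial bundle over a point.

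First I would construct the \emph{Thom diagonal}. The diagonal $\Delta: X \to X \times X$ lifts to a bundle map $E \to \pi_1^* E$ covering $\Delta$, and applying the unstable Thom space construction gives a based map $X^E_u \to X^E_u \wedge X_+$ after identifying $(X \times X)^{\pi_1^* E} \simeq X^E_u \wedge X_+$. Stabilising gives $\delta: X^E \to X^E \wedge X_+$. Composing with the Thom class $U: X^E \to \Sigma^n R$ produces
$$X^E \xrightarrow{\delta} X^E \wedge X_+ \xrightarrow{U \wedge \operatorname{id}} \Sigma^n R \wedge X_+ \simeq \Sigma^{\infty + n}_+ X \wedge R,$$
and smashing this composite with $R$ and then using the multiplication $\mu: R \wedge R \to R$ produces the desired $R$-module map $\Phi$. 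That $\Phi$ respects the $R$-action up to homotopy is formal from the associativity of $\mu$.

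Next I would show $\Phi$ is an equivalence. Over a point, $E$ is trivial of rank $n$ and $X^E \simeq \Sigma^n \Sphere$; the condition in the definition of a Thom class forces $\Phi$ to reduce to the canonical equivalence $\Sigma^n \Sphere \wedge R \simeq \Sigma^n R$. By naturality, this extends to any trivial bundle over any base. For general $X$, I would induct up a CW decomposition: if $X = X^{(k-1)} \cup_\alpha e^k_\alpha$ and the result is known on the $(k-1)$-skeleton, the cofibre sequence obtained by restricting $E$ (which is trivial over each cell) together with the analogous sequence for $\Sigma^{\infty+n}_+ X \wedge R$ and the five lemma in the triangulated homotopy category of $R$-module spectra give the result on $X^{(k)}$; taking homotopy colimits handles infinite-dimensional $X$, using that $\Sigma^\infty$ and smash with $R$ commute with filtered colimits.

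The main technical subtlety is verifying that these constructions are well-behaved in the stable category: one must check that $\delta$ is independent of the choice of metric and trivialising chart up to canonical homotopy (so the Thom diagonal descends to an $R$-module map between stable homotopy types), and that smash with $R$ preserves the cofibre sequences used in the induction; both follow from the axioms of the stable homotopy category and Remark \ref{Thom Spectrum Is Stable}. The more general statement involving an auxiliary virtual bundle $F$ is then formal: replace $X_+$ throughout by $X^F$, construct the generalised Thom diagonal using the identification $(X \times X)^{\pi_1^* E \oplus \pi_2^* F} \simeq X^E \wedge X^F$, and repeat the argument verbatim, so the $R$-orientation of $E$ again yields an equivalence $\Sigma^{n} X^F \wedge R \simeq X^{E \oplus F} \wedge R$.
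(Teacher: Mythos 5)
The paper does not prove this result itself; it is quoted directly from Rudyak (Theorem V.1.15 and Exercise V.1.28), so there is no in-paper proof to compare against. That said, your proposal is correct and is essentially the same argument given in that reference: build the map from the Thom diagonal composed with the Thom class and the multiplication on $R$, reduce to the fibrewise statement (where the defining property of a Thom class gives the unit equivalence $\Sigma^n\Sphere\wedge R\simeq\Sigma^n R$), then propagate over a CW structure via the five lemma and a colimit argument. Two small points worth making explicit: the genuinely virtual case ($E = E'-\R^m$) should be reduced at the outset to the actual-bundle case by desuspension, using Remark \ref{Thom Spectrum Is Stable}, since ``trivial over a cell'' only makes literal sense for a genuine bundle; and in the inductive step the cofibre of $(X^{(k-1)})^{E} \to (X^{(k)})^{E}$ is a wedge of $\Sigma^{n+k}\Sphere$'s precisely because $E$ trivialises over each open $k$-cell, which is what lets the map of cofibre sequences close up.
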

        \subsection{Pontryagin-Thom collapse maps}
            Let $i: X \hookrightarrow Y$ be a smooth embedding of manifolds with $X$ compact, such that $i(\partial X) \subseteq \partial Y$. Let $\nu$ be the normal bundle of $i$, with unit disc bundle $D\nu$ and unit sphere bundle $S\nu$ (with respect to some choice of metric). Furthermore we fix a tubular neighbourhood of $X$, i.e. we pick an embedding $j: D\nu \hookrightarrow Y$ extending $i$, and not touching $\partial Y$ apart from over $\partial X$.\par
            We will denote the one-point compactification of $Y$ by $Y_\infty$, viewed as a based space with basepoint at infinity. If $Y$ is already compact then this is $Y_+$.
            \begin{DD}
                We define the \emph{Pontryagin-Thom collapse map} $i_{!,u}: Y_\infty \rightarrow X^\nu_u$ by
                $$ i_{!,u}(x) = 
                \begin{cases}
                    j^{-1}(x) & \textit{ if } x \in j(D\nu)\\
                    S\nu & \textit{ otherwise}
                \end{cases} $$
                We denote the stabilisation $\Sigma^\infty i_{!, u}$ by $i_!$.
            \end{DD}
            The map $i_{!,u}$ is continuous, and both the space $X^\nu_u$  and the map $i_{!,u}$ are independent of the choices of metric and tubular neighbourhood up to homotopy.\par
            Now let $f: X^m \rightarrow Y^n$ be any smooth map of manifolds. 
            \begin{DD}
                We define the \emph{stable normal bundle} of $f$, $\nu_f$, to be the virtual vector bundle $f^* TY - TX$.
            \end{DD}
            When $f$ is an embedding, this is stably isomorphic to the normal bundle of the embedding, defined in the usual sense.\par 
            Now assume $X$ and $Y$ are closed, and choose a smooth embedding 
            $$i: X \hookrightarrow \mathbb{R}^N$$
            for some $N$. Now $\nu_f \oplus \R^N$ is stably isomorphic to the normal bundle $\nu_{f \times i}$ of the embedding 
            $$f \times i: X \hookrightarrow Y \times \R^N$$ 
            We can consider the map
            $$(f \times i)_!: \Sigma^\infty Y_\infty \simeq \Sigma^{\infty-N} \left(Y \times \mathbb{R}^N\right)_\infty \rightarrow \Sigma^{-N} X^{\nu_{f \times i}} \simeq X^{\nu_f}$$
            \begin{LL}
                Up to homotopy, the morphism $\Sigma^{\infty} Y_\infty \rightarrow X^{\nu_f}$ does not depend on the choice of $i$ or $N$.
            \end{LL}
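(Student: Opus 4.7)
The plan is to reduce the claim to two separate assertions: that changing $N$ (for fixed $i$) changes the map only by suspension, and that for fixed $N$ two different choices of $i$ give homotopic maps after further stabilisation. The map is then well-defined as an element of $[\Sigma^\infty Y_\infty, X^{\nu_f}]$, which (by $[X, Y] = [\Sigma^N X, \Sigma^N Y]$) is what the stated formulation means.

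For the first reduction, given $i: X \hookrightarrow \R^N$ and its stabilisation $i': X \hookrightarrow \R^{N+1}$ defined by $i'(x) := (i(x), 0)$, there is a canonical isomorphism $\nu_{f \times i'} \cong \nu_{f \times i} \oplus \R$, and $(Y \times \R^{N+1})_\infty \cong \Sigma (Y \times \R^N)_\infty$. Directly from the collapse construction, $(f \times i')_{!, u}$ is the (reduced) suspension of $(f \times i)_{!, u}$, so after applying $\Sigma^\infty$ and identifying $\Sigma X^{\nu_f \oplus \R} \simeq \Sigma \Sigma X^{\nu_f}$ via Remark \ref{Thom Spectrum Is Stable}, the resulting element of $[\Sigma^\infty Y_\infty, X^{\nu_f}]$ is unchanged.

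For the second reduction, fix $N$ large enough that the space of smooth embeddings $X \hookrightarrow \R^N$ is path-connected; by general position this holds for $N > 2 \dim X + 1$, and any smaller $N$ may be enlarged using the previous step. For $i_0, i_1: X \hookrightarrow \R^N$, choose a smooth isotopy $H: X \times [0,1] \hookrightarrow \R^N$ between them, and consider the embedding
$$\tilde H: X \times [0,1] \hookrightarrow Y \times \R^N \times [0,1], \qquad (x,t) \mapsto (f(x), H(x,t), t),$$
which sends boundary to boundary. Its normal bundle is canonically, stably isomorphic to the pullback of $\nu_{f \times i_0}$ along $X \times [0,1] \to X$, since along each slice $t = t_0$ it agrees with $\nu_{f \times H(\cdot, t_0)}$ and $H$ identifies these stably. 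The Pontryagin-Thom collapse map of $\tilde H$ produces a based map
$$(Y \times \R^N \times [0,1])_\infty \longrightarrow (X \times [0,1])_u^{\nu_{\tilde H}} \simeq X_u^{\nu_{f \times i_0}} \wedge [0,1]_+,$$
which, after stabilising and identifying the Thom spectrum of the product bundle in the obvious way, realises a homotopy of spectra $\Sigma^{\infty + N} Y_\infty \wedge [0,1]_+ \to \Sigma^N X^{\nu_f}$ from $(f \times i_0)_!$ to $(f \times i_1)_!$.

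The main technical obstacle is the canonicity of the stable identification of normal bundles used in the parametrised step: one needs to verify that the collapse construction genuinely descends to a homotopy of spectra rather than merely a continuous family of maps of based spaces agreeing at the endpoints. Given that Thom spectra depend only on the stable isomorphism class of a virtual vector bundle (Remark \ref{Thom Spectrum Is Stable}), this ultimately amounts to bookkeeping of the evident product structure on $\nu_{\tilde H}$, but it is the step requiring the most care.
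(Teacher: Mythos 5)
Your proposal is correct and follows essentially the same two-step strategy as the paper: reduce changing $N$ to suspension via the standard inclusion $\R^N \hookrightarrow \R^{N+1}$, then for fixed (large) $N$ use an isotopy $I: X \times [0,1] \hookrightarrow \R^N \times [0,1]$ over $[0,1]$ between the two embeddings and apply Pontryagin--Thom collapse to $f \times I$ to produce the required homotopy. The paper states this more tersely but the content is identical, including the "increase $N$ if necessary" step.
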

            \begin{proof}
                First, observe that if we compose $i$ with the standard embedding $\mathbb{R}^N \hookrightarrow \mathbb{R}^{N+1}$, the resulting map is the same up to suspension.\par 
                Assume $i$, $i'$ are two choices of embedding $X \hookrightarrow \mathbb{R}^N$. Pick an embedding 
                $$I: X \times [0,1] \hookrightarrow \mathbb{R}^N \times [0,1]$$
                over $[0,1]$ restricting to $i$ and $i'$ over $0$ and $1$ respectively, increasing $N$ if necessary as above. Then $(f \times I)_{!, u}$ provides a homotopy between $(f \times i)_{!,u}$ and $(f \times i')_{!, u}$.
            \end{proof}
            Now that we know the desuspended map $\Sigma^\infty Y_\infty \rightarrow X^{\nu_f}$ doesn't depend on these choices (up to homotopy), we denote it by $f_!$.\par 
            We fix a ring spectrum $R$. Given an $R$-orientation of $\nu_f$, we consider the following composition, which is a map of $R$-modules:
            $$p_f: \Sigma^\infty Y_\infty \wedge R \xrightarrow{f_!} X^{\nu_f} \wedge R \xrightarrow{\simeq} \Sigma^{\infty + d}_+ X \wedge R \xrightarrow{f} \Sigma^{\infty + d} Y_\infty \wedge R$$
            where $d = n - m$, and the middle map is from the Thom isomorphism theorem. By construction, this factors through $\Sigma^{\infty + d}_+ X \wedge R$.\par
            Now suppose that $n = m$ (so $d = 0$), $Y$ is compact (so $Y_\infty = Y_+$), and there is a cobordism $W$ from $X$ to $Y$ along with a map $F: W \rightarrow Y$ restricting to the identity on $Y$ and to $f$ on $X$, as in the following diagram:
            \[\begin{tikzcd}
                X & & \\
                & W & Y \\
                Y & & 
                \arrow[from=1-1, to=2-3, "f"]
                \arrow[from=1-1, to=2-2, hook]
                \arrow[from=3-1, to=2-3, "Id"']
                \arrow[from=2-2, to=2-3, "F"]
                \arrow[from=3-1, to=2-2, hook]
            \end{tikzcd}\]
            \begin{RR}
                We will find ourselves in this situation in the proof of Theorem \ref{Main Theorem}, with $Y = L$, $X = \sP$ a moduli space of suitable holomorphic discs with moving Lagrangian boundary conditions, and $\sW$ a moduli space of moving boundary conditions varying in a family of such boundary conditions. 
            \end{RR}
            We assume that $\nu_F$ admits an orientation with respect to $R$, which, by restriction, induces one for $\nu_f$.
            \begin{LL}\label{Cobordism Implies Section}
                After applying $\Sigma^\infty_+ \cdot \wedge R$, $f$ admits a section of $R$-module maps up to homotopy (of $R$-module maps).
            \end{LL}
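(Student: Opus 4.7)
The plan is to exploit that, by construction,
\[ p_f \;=\; f \circ s \;:\; \Sigma^\infty_+ Y \wedge R \;\xrightarrow{f_!}\; X^{\nu_f} \wedge R \;\xrightarrow{\simeq}\; \Sigma^\infty_+ X \wedge R \;\xrightarrow{f}\; \Sigma^\infty_+ Y \wedge R \]
factors through the $R$-module map $s := (\text{Thom iso}) \circ f_!$, where the Thom isomorphism uses the $R$-orientation of $\nu_f$ inherited from that of $\nu_F$ by restriction. So it suffices to produce a homotopy $p_f \simeq \mathrm{id}$ of $R$-module maps; then $s$ automatically serves as the desired section (up to $R$-module homotopy).

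First observe that $p_{\mathrm{id}_Y}$ is tautologically the identity: $\nu_{\mathrm{id}_Y}$ is the zero virtual bundle, its Thom isomorphism is the identity, and $(\mathrm{id}_Y)_!$ associated to the trivial graph embedding $Y \hookrightarrow Y \times \R^N$ is also the identity (up to the canonical suspensions). So it suffices to build a homotopy $p_f \simeq p_{\mathrm{id}_Y}$, which I will obtain by running the Pontryagin-Thom construction $[0,1]$-parametrically over the cobordism $W$.

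Concretely, pick a smooth embedding $\iota: W \hookrightarrow [0,1] \times \R^N$ for $N$ large which is proper with respect to the boundary, with $\iota(X) \subset \{0\} \times \R^N$ and $\iota(Y) \subset \{1\} \times \{0\}$, and which is a product on some collar of $\partial W$. Pairing with $F$ yields a proper embedding
\[ (F, \iota): W \hookrightarrow Y \times [0,1] \times \R^N \]
whose stable normal bundle is $\nu_F \oplus \R^{N+1}$. The relative Pontryagin-Thom collapse (well-defined because of the boundary-proper embedding and the product collar) gives a map
\[ \Sigma^N \Sigma^\infty_+ (Y \times [0,1]) \longrightarrow \Sigma^{N+1} W^{\nu_F}. \]
After smashing with $R$, applying the Thom isomorphism induced by the $R$-orientation of $\nu_F$, and composing with the map induced by $F$, this produces an $R$-module map
\[ P_F: \Sigma^\infty_+ (Y \times [0,1]) \wedge R \longrightarrow \Sigma^\infty_+ Y \wedge R \]
whose restrictions to $Y \times \{0\}$ and $Y \times \{1\}$ are, by virtue of the product structure on the collars, $p_f$ and $p_{\mathrm{id}_Y}$ respectively. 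This is exactly the sought homotopy.

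The main obstacle I expect is verifying cleanly that the relative Pontryagin-Thom collapse, together with the Thom isomorphism for $\nu_F$, restricts on each end of the cobordism to its absolute counterpart for $f$ (resp.\ $\mathrm{id}_Y$). This is a local check given the product structure near $\partial W$, but it also requires one to verify that the $R$-orientation of $\nu_F$ restricts to those of $\nu_f$ and $\nu_{\mathrm{id}_Y}$ used in defining $p_f$ and $p_{\mathrm{id}_Y}$, which follows from functoriality of orientations under pullback.
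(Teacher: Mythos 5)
Your argument follows the same route as the paper's proof: embed $W$ properly in $\R^N\times[0,1]$, run the Pontryagin--Thom construction parametrically over $F:W\to Y$ to obtain a homotopy of $R$-module maps $p_f \simeq p_{\mathrm{id}_Y}$, and then conclude from the invertibility of $p_{\mathrm{id}_Y}$. One minor wrinkle: you assert that $p_{\mathrm{id}_Y}$ is ``tautologically the identity,'' with the Thom isomorphism for $\nu_{\mathrm{id}_Y}$ taken to be the identity. But the homotopy you build only equates $p_f$ with the version of $p_{\mathrm{id}_Y}$ defined using the $R$-orientation of $\nu_{\mathrm{id}_Y}$ inherited by restriction from $\nu_F$, and this need not be the canonical trivialisation of the zero virtual bundle (for instance, it can differ by a unit of $\pi_0 R$ on different components of $Y$); with that inherited orientation, $p_{\mathrm{id}_Y}$ is guaranteed only to be an equivalence, not the identity. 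This is why the paper asserts merely that $p_{\mathrm{id}}$ is a composite of three equivalences and then takes the section to be $s\circ p_f^{-1}$ rather than $s$ itself --- the same harmless adjustment repairs your version.
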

            \begin{proof}
                We will show that the map $p_f$ constructed above is an equivalence, then $f_! \circ p_f^{-1}$ will be the desired section.\par 
                We pick an embedding 
                $$I: W \hookrightarrow \mathbb{R}^N \times [0,1]$$
                for some $N$, such that $I^{-1}\left(\R^N \times \{0\}\right) = X$ and $I^{-1}\left(\R^N \times \{1\}\right) = Y$.
                Then choosing an orientation of $\nu_F$ and performing the above construction to $F$ gives us a homotopy from $p_f$ to $p_{Id}$, where $p_{Id}$ is constructed similarly to $p_f$ but for $Id: Y \rightarrow Y$, using some choice of trivialisation on the trivial vector bundle over $Y$. But $p_{Id}$ is a composition of three equivalences and hence $p_f$ is an equivalence.\par 
                All the maps and homotopies are constructed either by applying $\cdot \wedge R$ to a map of spectra or by applying the Thom isomorphism theorem over $R$, so they are all $R$-module maps.
            \end{proof}
            \begin{CC}\label{Surjectivity Corollary}
                The map $f_*: R_* X \rightarrow R_* Y$ is split surjective.
            \end{CC}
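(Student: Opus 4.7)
The plan is to deduce this as an immediate formal consequence of the preceding Lemma \ref{Cobordism Implies Section}. That lemma provides, after smashing with $R$, a morphism $s: \Sigma^\infty_+ Y \wedge R \to \Sigma^\infty_+ X \wedge R$ such that the composite $f \circ s$ is homotopic to the identity as a map of $R$-module spectra (and hence in particular as a map of spectra).

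Applying the functor $\pi_*$ turns this homotopy identity into an actual identity of group homomorphisms. By the definition $R_*(Z) := \pi_*(\Sigma^\infty_+ Z \wedge R)$, we obtain
$$f_* \circ s_* = (f \circ s)_* = \mathrm{id}_{R_* Y}: R_* Y \to R_* Y.$$
Hence $f_*: R_* X \to R_* Y$ admits $s_*$ as a right inverse, which forces it to be surjective.

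There is no real obstacle here, since all the content lies in Lemma \ref{Cobordism Implies Section}; the corollary is just the observation that a section at the level of $R$-module spectra descends to a section at the level of homotopy groups, and any map admitting a right inverse is surjective. The proof will therefore be essentially one line, simply invoking the lemma and applying $\pi_*$.
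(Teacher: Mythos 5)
Your proposal is correct and is exactly the intended argument: the paper states this corollary without proof precisely because it follows immediately from Lemma~\ref{Cobordism Implies Section} by applying $\pi_*$ to the section of $R$-module maps. Nothing is missing.
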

            It follows from \cite[Lemma II.2.4 and Theorem V.2.3]{Rudyak} that 
            \begin{CC}\label{Injectivity Corollary}
                If $W$ is $R$-orientable (and hence $X$ and $Y$ are too), the map $f^*: R^*Y \rightarrow R^*X$ is injective.
            \end{CC}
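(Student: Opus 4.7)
The plan is to dualise the section provided by Lemma \ref{Cobordism Implies Section}. Writing $s: \Sigma^\infty_+ Y \wedge R \to \Sigma^\infty_+ X \wedge R$ for the $R$-module map that lemma produces (so that $(f \wedge R) \circ s \simeq \mathrm{id}$ as $R$-module maps), applying the contravariant functor $[-, \Sigma^i R]_R$ of $R$-module homotopy classes immediately yields $s^* \circ (f \wedge R)^* = \mathrm{id}$. Hence $(f \wedge R)^*$ admits a left inverse on $R$-module morphism sets, and is in particular injective.

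To transfer this to injectivity on $R$-cohomology, I would invoke the free/forgetful adjunction between spectra and $R$-modules, together with the standard representation of $R$-cohomology by $R$-module maps into $\Sigma^i R$ --- this is precisely the content of the cited Rudyak Lemma II.2.4 and Theorem V.2.3. They supply a natural bijection
\[
R^i(Z) \;\cong\; [\Sigma^\infty_+ Z \wedge R, \; \Sigma^i R]_R
\]
under which the pullback $f^*: R^*Y \to R^*X$ is identified with the contravariant action of $f \wedge R$ on $R$-module morphism sets. The left inverse $s^*$ constructed above then exhibits $f^*$ as injective.

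The hypothesis that $W$ is $R$-orientable is used twice. First, restricting an orientation of $TW$ to the boundary components forces both $X$ and $Y$ to be $R$-orientable, which is what allows the cited Rudyak results to be applied cleanly. Second, together with $R$-orientability of $TY$, it forces $\nu_F = F^*TY - TW$ to be $R$-orientable by Lemma \ref{Orientabiliy Conditions from Rudyak}(5); this is exactly the hypothesis under which Lemma \ref{Cobordism Implies Section} produces the section $s$ in the first place, so the argument above makes sense.

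The main technical subtlety --- and the only place where one needs to be careful --- is that the adjunction must be natural at the level of $R$-module homotopy classes rather than merely on the point-set level, so that $f^*$ on $R$-cohomology really corresponds to $(f \wedge R)^*$ on $R$-module morphism sets and the equation $s^* \circ (f \wedge R)^* = \mathrm{id}$ transports to $R^*$. This naturality is exactly what the cited Rudyak results encode, so no additional work is required beyond assembling these pieces.
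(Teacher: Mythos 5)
Your overall plan is sound and the dualisation step works: the section $s$ makes $(f \wedge R)$ a split epi of $R$-modules, and any contravariant functor sends a split epi to a split mono. But the route is almost certainly different from the paper's, which cites Rudyak's Theorem V.2.3 (in the chapter on orientations) — this is Poincar\'e/Atiyah duality, not a free/forgetful adjunction. Under that approach one identifies $R^i(Y) \cong R_{n-i}(Y)$ and $R^i(X) \cong R_{n-i}(X)$ (this is where $R$-orientability of $X$, $Y$ genuinely enters), and shows that $f^*$ corresponds under these identifications to a Gysin map which is a split mono because of the section. Your approach via the free module functor is more elementary, and is valid, but two points deserve flagging.

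First, in the naive up-to-homotopy setting the paper works in, the ``natural bijection'' $R^i(Z) \cong [\Sigma^\infty_+ Z \wedge R, \Sigma^i R]_R$ is delicate: the homotopy witnessing $\psi \simeq \widetilde{\mathrm{res}(\psi)}$ is constructed from the module axiom for $\psi$ and is not obviously an $R$-module homotopy, so surjectivity of the extension map is not automatic. Fortunately you do not need a bijection. It suffices that the extension map $\alpha \mapsto \mu \circ (\alpha \wedge 1_R)$ from $R^i(Z) = [\Sigma^\infty_+ Z, \Sigma^i R]$ into $[\Sigma^\infty_+ Z \wedge R, \Sigma^i R]$ (plain spectrum maps are enough here) is injective — which it is, since precomposing with the unit $\Sigma^\infty_+ Z \to \Sigma^\infty_+ Z \wedge R$ recovers $\alpha$ by right unitality of $R$ — and that it is natural in $Z$, which is immediate. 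Then $\mathrm{ext}\circ f^* = (f\wedge R)^* \circ \mathrm{ext}$, the right-hand side is injective because $(f\wedge R)^*$ is split mono, so $f^*$ is injective.

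Second, your accounting of the hypothesis is confused and a little circular. You write that $R$-orientability of $W$ ``together with $R$-orientability of $TY$'' forces $\nu_F$ to be $R$-orientable — but $R$-orientability of $\nu_F$ is the \emph{standing} assumption of this part of the paper (stated just before Lemma \ref{Cobordism Implies Section}), and $R$-orientability of $TY$ is not separately given; it is a \emph{consequence} of $\nu_F$ and $TW$ being orientable. More importantly, your argument, once tightened as above, never actually uses that $W$ is $R$-orientable: only $\nu_F$. That is perfectly fine — you have proved a marginally stronger statement — but you should either note this or not claim the hypothesis is being ``used twice'' when your route uses it zero times. The paper's hypothesis is there because its intended duality-theoretic proof needs it.
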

            We can also define Pontryagin-Thom collapse maps without smoothness assumptions. Let $i: X \hookrightarrow Y$ be an embedding of spaces.
            \begin{DD}
                We say $X$ admits a \emph{tubular neighbourhood} in $Y$ with \emph{normal bundle} $\nu$ if there is a vector bundle $\nu$ over $X$ and an open neighbourhood $U$ of $X$ in $Y$, such that there is a homeomorphism $\phi: U \rightarrow DE$ between $U$ and the unit disc bundle $DE$ of $\nu$ (for some choice of fibrewise metric on $\nu$), sending $X$ to the zero section.
            \end{DD}
            \begin{DD}\label{PT without smoothness}
                Let $E \rightarrow Y$ be another vector bundle. We define the \emph{Pontryagin-Thom collapse map} $i_{!, u}: Y^E_u \rightarrow X^{\nu \oplus E}_u$ by
                $$(x, v) \mapsto \begin{cases}
                    (\phi(x), v) & \mathrm{ if } \,x \in U\\
                    S(\nu \oplus E) & \mathrm{ otherwise}
                \end{cases}$$
                We write $i_!$ for the induced map on Thom spectra.\par 
                This definition extends to the case when $E$ is a virtual vector bundle of the form $E' - \R^N$, as we can apply this construction to $E'$ and desuspend. Similarly to before, up to homotopy $i_! := \Sigma^\infty i_{!, u}$ then only depends on the virtual vector bundle up to stable isomorphism.
            \end{DD}
            \begin{EE}
                When $X \hookrightarrow Y$ is a proper embedding of smooth manifolds, $X$ always admits a tubular neighbourhood in $Y$, and we recover the earlier construction.
            \end{EE}
            
        \subsection{Fundamental classes}\label{Fundamental Classes Subsection}
            Let $X^n$ be a closed manifold of dimension $n$. Choose an embedding 
            $$i: X \hookrightarrow \R^N \hookrightarrow S^N$$
            for some $N$, with normal bundle $\nu$. Then as virtual vector bundles, there is a natural isomorphism $\nu - \R^N \cong -TX$. Consider the map $i_{!, u}: S^N \rightarrow X^\nu$. Stabilising and desuspending gives us a well-defined (up to homotopy) map of spectra
            $$[X]: \Sphere \rightarrow X^{-TX}$$
            Given a ring spectrum $R$ and an $R$-orientation of $TX$ (which induces one on $-TX$), the Thom isomorphism theorem gives us a well-defined (up to homotopy) map of spectra
            $$[X]: \Sphere \rightarrow \Sigma^{\infty-n}_+ X \wedge R$$
            representing an element $[X]$ in $R_n(X)$.
            More generally, given a space $Z$, a map of spaces $f: X \rightarrow Z$, a vector bundle $E \rightarrow Z$, and an $R$-orientation of $TX - f^*E$ (which we assume to have rank $d$), we can use the Thom isomorphism theorem to obtain a well-defined (up to homotopy) map of spectra
            $$\Sphere \xrightarrow{[X]} X^{-TX} \wedge R \simeq \Sigma^{-d} X^{-f^*E} \wedge R \rightarrow \Sigma^{-d}Z^{-E} \wedge R$$
            which we also denote by $[X]$. Here the middle homotopy equivalence is the Thom isomorphism.\par            
            We call all of these maps $[X]$ \emph{fundamental classes}.
            \begin{LL} \label{PT functorial}
                Let $X$ and $Y$ be closed manifolds, and let $f: X \hookrightarrow Y$ be an embedding. Then the following diagram commutes:
                \[ \begin{tikzcd}
                    \Sphere 
                    \arrow[d, "{[Y]}"] \arrow[dr, "{[X]}"] & \\
                    Y^{-TY} \arrow[r, "f_\Shrek"] &
                    X^{-TX}
                \end{tikzcd}\]
            \end{LL}
            \begin{proof}
                Let $i: Y \hookrightarrow \R^N$ be an embedding. Then $i \circ f: X \hookrightarrow \R^N$ is an embedding too, and using this embedding to construct the unstable Pontryagin-Thom collapse map, we get a commutative diagram (of spaces):
                \[ \begin{tikzcd}
                    S^N
                    \arrow[d, "i_{\Shrek, u}"] \arrow[dr, "(i \circ f)_{\Shrek, u}"] & \\
                    Y^{\nu_i} \arrow[r, "f_{\Shrek, u}"] &
                    X^{\nu_{i \circ f}}
                \end{tikzcd}\]
                where $\nu_i$ and $\nu_{i \circ f}$ are the normal bundles of $i$ and $i \circ f$ respectively.\par 
                Stabilising this diagram gives the required diagram.
            \end{proof}
            \begin{LL}\label{Cobodism Implies Same Fundamental Class}
                Let $Y$ be another closed manifold of dimension $n$. Suppose $f: X \rightarrow Z$ and $g: Y \rightarrow Z$ are two maps, such that there is a cobordism $W$ from $X$ to $Y$, and a map $F: W \rightarrow Z$ extending $f$ and $g$, as shown below.
                \[\begin{tikzcd}
                    X & & \\
                    & W & Z \\
                    Y & & 
                    \arrow[from=1-1, to=2-3, "f"]
                    \arrow[from=1-1, to=2-2, hook]
                    \arrow[from=3-1, to=2-3, "g"']
                    \arrow[from=2-2, to=2-3, "F"]
                    \arrow[from=3-1, to=2-2, hook]
                \end{tikzcd}\]
                Let $E \rightarrow Z$ be a vector bundle and assume there is an $R$-orientation of $TW - F^*E$, which we assume to have rank $d + 1$. Then the two fundamental classes $[X], [Y]$ in $R_d(Z^{-E})$ (defined using the orientations of $TX - f^*E$ and $TY - g^*E$ given by restricting the orientation of $TW - F^* E$) agree. 
            \end{LL}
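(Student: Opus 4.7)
The plan is to promote the Pontryagin-Thom construction used to define $[X]$ and $[Y]$ to a 1-parameter family over $W$, producing a single morphism of spectra $H: \Sphere \wedge [0,1]_+ \wedge R \to \Sigma^{-d} Z^{-E} \wedge R$ whose restrictions to $\{0\}_+$ and $\{1\}_+$ recover $[X]$ and $[Y]$ respectively.

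Concretely, I would pick a smooth embedding $W \hookrightarrow \R^N \times [0,1]$ for some large $N$, sending $X$ into $\R^N \times \{0\}$, $Y$ into $\R^N \times \{1\}$, with $W$ meeting $\R^N \times \{0,1\}$ transversely along $\partial W$. Such an embedding exists by standard manifold topology (using a collar on $\partial W$). Since the ambient space has dimension $N+1$ and $W$ has dimension $n+1$, the normal bundle $\nu_W$ has rank $N-n$ and satisfies $\nu_W \oplus TW \cong \R^{N+1}$, so that stably $\nu_W - \R^N \cong \R - TW$. Applying the Pontryagin-Thom collapse for this embedding (in the boundary-in-the-boundary setting, exactly as used in Lemma \ref{Cobordism Implies Section}) yields a based map
$$S^N \wedge [0,1]_+ \cong (\R^N \times [0,1])_\infty \longrightarrow W^{\nu_W}_u,$$
and stabilising gives a morphism of spectra $\Sphere \wedge [0,1]_+ \to W^{\nu_W - \R^N}$. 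Smashing with $R$, applying the Thom isomorphism determined by the hypothesised $R$-orientation of $TW - F^*E$ (of rank $d+1$), composing with the map on Thom spectra induced by $F: W \to Z$, and adjusting by an overall suspension produces the desired morphism $H$.

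The remaining step --- and the main technical obstacle --- is to verify that $H|_{\{0\}_+} = [X]$ and $H|_{\{1\}_+} = [Y]$. The restriction of the Pontryagin-Thom collapse to $\{0\}_+ \subset [0,1]_+$ manifestly collapses away from a tubular neighbourhood of $X \subset W$ and factors as $S^N \to X^{\nu_X}_u \hookrightarrow W^{\nu_W}_u$, where $\nu_X = \nu_W|_X$ is the normal bundle of $X$ in $\R^N$; stabilising, this is exactly the construction defining $[X]: \Sphere \to X^{-TX}$. The subtle point is then to check that the Thom-isomorphism identifications match. Over $X$ one has $TW|_X \cong TX \oplus \R_{\nu}$, where $\R_{\nu}$ is spanned by the inward normal to $X$ in $W$, and by the transversality arrangement $\R_{\nu}$ is canonically identified with the trivial $\R$-summand of $\R^{N+1} = \R^N \oplus \R$ that is desuspended above. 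The two trivial lines cancel in the virtual bundle, giving a canonical stable isomorphism $(\nu_W - \R^N)|_X \cong -TX$, under which the restriction of the $R$-orientation of $TW - F^*E$ agrees with the specified $R$-orientation of $TX - f^*E$ used to define $[X]$. The identical argument at $\{1\}_+$ (with the inward normal at $Y$ pointing in the opposite $[0,1]$-direction, which is precisely the sign convention built into the definition of "restriction of orientations") shows $H|_{\{1\}_+} = [Y]$, completing the proof.
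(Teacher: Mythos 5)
Your proposal is correct and follows the paper's proof essentially verbatim: the paper also picks an embedding $I: W \hookrightarrow \R^N \times [0,1]$ with $X$ and $Y$ over the endpoints, applies the Pontryagin--Thom collapse to get a map $\Sphere \wedge [0,1]_+ \rightarrow W^{\R - TW}$, uses the Thom isomorphism from the $R$-orientation of $TW - F^*E$, and pushes forward along $F$. Your discussion of why the endpoint restrictions recover $[X]$ and $[Y]$ (matching the inward-normal line against the suspension coordinate) fills in a verification the paper leaves implicit, but the route is the same.
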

            \begin{proof}
                Pick an embedding 
                $$I: W \hookrightarrow \R^N \times [0, 1]$$
                for some $N$, such that $W^{-1} \left(\R^N \times \{0\}\right) = X$ and $W^{-1} \left(\R^N \times \{1\}\right) = Y$. Then the composition
                $$\Sphere \wedge [0, 1]_+ \xrightarrow{I_!} W^{\R - TW} \wedge R \simeq \Sigma^{-d} W^{-F^* E} \rightarrow \Sigma^{-d} Y^{-E}$$
                where the middle map is the Thom isomorphism, defines a homotopy between $[X]$ and $[Y]$.
            \end{proof}
            \begin{RR}
                Let $X = \bigsqcup\limits_{i \geq 0} X^i$ be a manifold consisting of components $X^i$ of dimension $i$. If $X$ is compact (in particular implying that only finitely many components $X^i$ are non-empty), we can consider the fundamental classes of these independently and obtain a fundamental class $[X] := \sum\limits_i [X^i]$ in $\bigoplus\limits_{i} R_i\left(X^{-TX}\right)$. We will use this later in Section \ref{Looped Section}.
            \end{RR}
       
    \section{Index bundles and their orientations}\label{Index Bundle Section}
        \subsection{Cauchy-Riemann operators}\label{CR Operators Subsection}
            Let $B$ be a space and $A \subseteq B$ some subspace.
            \begin{DD}
                A \emph{bundle pair} over the pair $(B, A)$ is a complex vector bundle $E$ over $B$ along with a totally real subbundle $F$ of $E|_A$. This is written as
                $$(E, F) \rightarrow (B, A)$$ 
                We call $E$ the \emph{complex part} and $F$ the \emph{real part}.\par 
                We can perform certain operations on bundle pairs, analogously to vector bundles.
                \begin{itemize}
                    \item Given a real vector bundle $G$ over $B$, we define $(E, F) \otimes G$
                    to be the bundle pair
                    $$(E \otimes_\R G, F \otimes_\R G|_A)$$
                    \item Given $(E', F') \rightarrow (B, A)$ another bundle pair, we define ${(E, F) \oplus (E', F')}$
                    to be the bundle pair 
                    $$(E \oplus E', F \oplus F')$$
                    \item A \emph{virtual bundle pair} is a formal difference $(E, F) - (E', F')$ of two bundle pairs, and we say two virtual bundle pairs $(E, F) - (E', F')$ and ${(G, H) - (G', H')}$ are \emph{stably isomorphic} if there is an isomorphism of bundle pairs
                    $$(E, F) \oplus (G', H') \oplus (\C^N_B, \R^N_A) \cong (E', F') \oplus (G, H) \oplus (\C^N_B, \R^N_A)$$
                    for some $N$. \par 
                    \item The \emph{rank} of a bundle pair $(E, F)$ is $\Rank_\C E$, and the \emph{(virtual) rank} of a virtual bundle pair $(E, F) - (E', F')$ is $\Rank_\C E - \Rank_\C E'$.\par 
                    \item Given a bundle pair $(E, F) \rightarrow (B, A)$ and a map of pairs $f: (B', A') \rightarrow (B, A)$, we define the bundle pair $f^*(E, F) \rightarrow (B', A')$ to be the bundle pair $(f^*E, f^*F)$.\par 
                    \item We define a \emph{section} of $(E, F)$ to be a smooth section of $E$ whose restriction to $A$ lies in $F$. We write $\Gamma(E, F)$ for the space of sections.
                \end{itemize}
            \end{DD}
            Similarly to the case of real or complex vector bundles, when $B$ is a finite CW complex and $A$ a subcomplex, every virtual bundle pair over $(B, A)$ is stably isomorphic to $(E, F) - (\C^N_B, \R^N_A)$ for some $(E, F)$ and some $N$. Similarly the pullbacks of a bundle pair under homotopic maps are isomorphic.\par 
            We will usually take $(B, A)$ to be $(D^2, \partial D^2) \times X$ for some space $X$, and we will assume this to be the case for the rest of Section \ref{Index Bundle Section}.\par 
            For the rest of Section \ref{CR Operators Subsection}, we will assume that $X$ is a point, so $(E, F)$ is a bundle pair over $(D^2, \partial D^2)$.
            \begin{DD}
                A {\em (real) Cauchy-Riemann operator} on $(E, F)$ is an $\mathbb{R}$-linear first order differential operator
                $$D: \Gamma(E, F) \rightarrow \Omega^{0,1}(E)$$
                satisfying the Leibniz rule:
                $$D(f \eta) = (\bar{\partial} f) \eta + f D \eta$$
                for $f$ in $C^\infty(D^2, \R)$ and $\eta$ in $\Gamma(E, F)$.\par
            \end{DD}
            \begin{LL}[{\cite[Remark C.1.2]{McDuff-Salamon}}]\label{CR operators contractible}                 The space of Cauchy-Riemann operators on $(E, F)$ is contractible (and in fact, convex).
            \end{LL}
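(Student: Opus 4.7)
The plan is to prove convexity directly from the Leibniz rule, and then to establish non-emptiness by a partition-of-unity construction from local models; these together give contractibility.

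First, for convexity, suppose $D_0$ and $D_1$ are two Cauchy--Riemann operators on $(E, F)$ and let $t \in [0, 1]$. Define $D_t = (1-t) D_0 + t D_1$, which is an $\R$-linear first-order differential operator $\Gamma(E, F) \rightarrow \Omega^{0,1}(E)$. For $f \in C^\infty(D^2)$ and $\eta \in \Gamma(E, F)$, expanding gives
$$D_t(f\eta) = (1-t)\bigl((\bar\partial f) \eta + f D_0 \eta\bigr) + t \bigl((\bar\partial f)\eta + f D_1 \eta\bigr) = (\bar\partial f)\eta + f D_t \eta,$$
so the Leibniz rule is preserved. Thus the space is convex, and in particular path-connected; moreover, fixing any basepoint $D_0$ the straight-line homotopy $(D, t) \mapsto (1-t) D_0 + t D$ gives a contraction to $D_0$, provided the space is non-empty.

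Second, for non-emptiness, I would build a Cauchy--Riemann operator by patching. Choose a locally finite open cover $\{U_\alpha\}$ of $D^2$ by coordinate discs such that, over each $U_\alpha$, the bundle pair $(E, F)|_{U_\alpha}$ admits a trivialisation as $(\C^n, \R^n)$ (this exists because $(D^2, \partial D^2)$ is contractible as a pair and because totally real subbundles can be trivialised locally near the boundary). In each trivialisation the standard $\bar\partial$ operator $\partial_{\bar z}$ defines a Cauchy--Riemann operator $D_\alpha$ on $(E, F)|_{U_\alpha}$ satisfying the Leibniz rule. Now let $\{\rho_\alpha\}$ be a smooth partition of unity subordinate to this cover and set $D = \sum_\alpha \rho_\alpha D_\alpha$. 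The same computation as in the convexity step, using $\sum_\alpha \rho_\alpha = 1$, shows that $D$ again satisfies the Leibniz rule, and the boundary condition is respected because each $D_\alpha$ respects it. Hence $D$ is a Cauchy--Riemann operator on $(E, F)$.

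The main (and essentially only) subtlety is the local triviality of the bundle pair near $\partial D^2$, i.e.\ writing $(E,F)$ as $(\C^n, \R^n)$ on a neighbourhood of each boundary point; once that local model is in hand, both the existence of a local $\bar\partial$ and the verification of the Leibniz rule for the patched operator are immediate. An alternative, perhaps cleaner, route is to observe that the difference $D_1 - D_0$ of two Cauchy--Riemann operators is $C^\infty(D^2)$-linear by the Leibniz rule and hence is a bundle homomorphism $E \rightarrow \Lambda^{0,1} \otimes_\C E$; this identifies the space of Cauchy--Riemann operators with a torsor over the vector space $\Gamma\bigl(\operatorname{Hom}_\R(E, \Lambda^{0,1} \otimes E)\bigr)$, and this reformulation makes both the convexity and the contractibility transparent once non-emptiness is established as above.
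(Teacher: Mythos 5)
Your proof is correct. The paper gives no proof of its own for this lemma---it simply cites \cite[Remark C.1.2]{McDuff-Salamon}---so there is no internal argument to compare against, but your argument matches the standard one and in particular your closing ``torsor over $\Gamma(\operatorname{Hom}_\R(E,\Lambda^{0,1}\otimes E))$'' observation is exactly the content of the cited remark. Both halves of your argument are fine: the convexity computation is a one-line check of the Leibniz rule, and contractibility follows by a straight-line homotopy to any basepoint once non-emptiness is known.

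The only place where you work harder than necessary is the non-emptiness step. Since the domain here is $(D^2,\partial D^2)$ and $D^2$ is contractible, the complex bundle $E$ is globally trivial, $E\cong \underline{\C}^n$. In any such global trivialisation the ordinary $\bar\partial$-operator $\Gamma(E)\to\Omega^{0,1}(E)$ satisfies the Leibniz rule, and since the definition in the paper only imposes the totally real condition $F$ on the \emph{domain} $\Gamma(E,F)$ (not on the target), one can simply restrict $\bar\partial$ to $\Gamma(E,F)$ to obtain a Cauchy--Riemann operator. This sidesteps the need for a locally-finite cover by coordinate discs with simultaneous trivialisations of $(E,F)$ and a partition of unity; the patching argument you give is perfectly valid and would be the right thing to do over a non-contractible base, but over the disc it is more machinery than the problem requires. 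Your parenthetical worry about trivialising the bundle pair near the boundary is genuine in the patching approach (it uses that $U_\alpha\cap\partial D^2$ is contractible so the classifying map to $U(n)/O(n)$ is nullhomotopic), but becomes irrelevant once you notice the boundary condition plays no role in the existence of an operator with the Leibniz property.
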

            For a choice of Hermitian metric on $E$ and a real number $q > 2$ (which we fix), a Cauchy-Riemann operator induces an operator 
            $$\hat{D}: W^{1, q}(E, F) \rightarrow W^{q}\left(\Lambda^{0, 1}T^*D^2 \otimes E\right)$$
            where these spaces are appropriate Sobolev completions of the above spaces of smooth sections. By \cite[Theorem C.1.10]{McDuff-Salamon}, this operator $\hat{D}$ is Fredholm, and in fact $\Ker \hat{D} = \Ker D$.
        \subsection{Index bundles}\label{IndexBundleSection}
            Let $X$ be a finite CW complex, and let 
            $${(E, F) \rightarrow (D^2, \partial D^2) \times X}$$
            be a bundle pair.\par 
            By Lemma \ref{CR operators contractible}, we can choose a Cauchy-Riemann operator $D_x$ on 
            $$\left(E|_{D^2 \times \{x\}}, F|_{\partial D^2 \times \{x\}}\right)$$
            for each $x$, varying continuously in $x$. The space of such choices is contractible.
            \begin{DD}
                Assume $\hat{D}_x$ is surjective for all $x$. The {\em index bundle} of $(E, F)$ is the (real) vector bundle $\Ind(E, F)$ over $X$, with fibre at a point $x$ given by $\Ker \hat{D}_x$. 
            \end{DD}
            If $\hat{D}_x$ is not always surjective, we can still define the index bundle, following \cite{Atiyah}. Since $X$ is compact, we can find, for some finite $N$, a continuous family of linear maps 
            $$\phi_x: \mathbb{R}^N \rightarrow W^q\left(\Lambda^{0, 1}T^*D^2 \otimes E|_{D^2 \times \{x\}}\right)$$
            for $x$ in $X$, such that the \emph{stabilised operator} $T_x := \hat{D}_x + \phi_x $
            $$T_x: W^{1, q}\left(E|_{D^2 \times \{x\}}, F|_{\partial D^2 \times \{x\}}\right) \oplus \R^N \rightarrow W^q\left(\Lambda^{0, 1} T^*D^2 \otimes E|_{D^2 \times \{x\}}\right)$$
            is surjective. In this situation we call $\phi$ a \emph{stabilisation} of \emph{rank} $N$. We then define $\Ind(E, F) + \R^N_X$ to be the vector bundle with fibre over $x$ given by $\Ker T_x$. 
            \begin{LL}\label{Index Bundles Are Canonical}
                The vector bundle $\Ind(E, F)$ is well-defined up to stable isomorphism. Furthermore this stable isomorphism can be chosen in a way that is unique up to weakly contractible choice.
            \end{LL}
            \begin{proof}
                Firstly we note that stabilising $\phi$ by adding another copy of $\R$ which is sent to 0 does not change the index bundle up to canonical isomorphism.\par 
                We choose a metric on $E$, and note that the space of such choices is contractible. This defines a $W^{0, 2}$ inner product on $W^{1, q}(E, F)$.\par 
                Given two choices of family of Cauchy-Riemann operators and stabilisation $(D, \phi)$ and $(D', \phi')$, by the above we can assume that both stabilisations are of the same rank. If $(D, \phi)$ and $(D', \phi')$ have distance less than 1 with respect to the operator norm, orthogonal projection defines an isomorphism between their kernels. Note that since the space of such operators is convex, this open ball is contractible.\par 
                In general, we can pick a path in the space of such $(D, \phi)$, and iterate this process along the path. Because the spaces of such pairs $(D, \phi)$ (up to stabilisation) are weakly contractible, the uniqueness result follows from a similar argument. 
            \end{proof}
            \begin{RR}
                We needed compactness of $X$ for there to exist a family of linear maps $\phi_x$ as above. If $X$ was not compact, the $\hat{D}_x$ may not have a uniform upper bound on the rank of its cokernel, in which case such $\phi_x$ cannot exist.\par 
                If $X$ was not assumed to be compact, we could construct the index bundle on compact subsets $Y \subseteq X$ and take colimits in an appropriate manner. This would require a longer discussion and is not required for our purposes.
            \end{RR}
            We will require some elementary properties of index bundles.
            \begin{LL}\label{Index Bundle Restriction}
                If $X'$ is another finite CW complex and $f: X' \rightarrow X$ is any map, then there is a natural isomorphism of virtual vector bundles
                $$f^* \Ind(E, F) \cong \Ind f^* (E, F)$$
                where we write $f^*(E, F)$ for $(Id_{D^2} \times f)^*(E, F)$.
            \end{LL}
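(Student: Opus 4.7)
The plan is to exploit the definition of the index bundle in terms of stabilised Cauchy-Riemann operators, and to observe that pullback commutes with the kernel construction on the nose once compatible choices of defining data are made. First I would fix a continuous family of Cauchy-Riemann operators $\{D_x\}_{x \in X}$ on $(E, F)$ together with a stabilisation $\phi_x: \R^N \to W^q(\Lambda^{0,1}T^*D^2 \otimes E|_{D^2 \times \{x\}})$ making each $T_x = \hat{D}_x + \phi_x$ surjective. This presents $\Ind(E, F) + \R^N$ as the vector bundle over $X$ whose fibre at $x$ is $\Ker T_x$.

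Next, I would pull these choices back along $f$. Concretely, set $D'_{x'} := D_{f(x')}$ and $\phi'_{x'} := \phi_{f(x')}$; since the fibre of $f^*(E, F)$ at $x'$ is canonically the fibre of $(E, F)$ at $f(x')$, the pair $(D', \phi')$ is a valid continuous family of Cauchy-Riemann operators and stabilisation for $f^*(E, F)$, with the stabilised operator $T'_{x'}$ still surjective (indeed, equal to $T_{f(x')}$). Thus $\Ind f^*(E, F) + \R^N$ can be presented as the bundle over $X'$ whose fibre at $x'$ is $\Ker T'_{x'} = \Ker T_{f(x')}$, which is exactly the fibre of $f^*(\Ind(E,F) + \R^N)$ at $x'$. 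This fibrewise identification is continuous and therefore gives an isomorphism between these specific representatives; cancelling the trivial summand $\R^N$ yields the claimed stable isomorphism $f^* \Ind(E, F) \cong \Ind f^*(E, F)$.

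Finally, I would invoke Lemma \ref{Index Bundles Are Canonical} to deduce that this identification is independent of the auxiliary choice of $(D, \phi)$ on $X$, up to the contractible ambiguity built into the definition of $\Ind$. The only real subtlety is naturality, i.e.\ verifying that different choices of stabilising data on $X$ (or on $X'$, or those arising by pullback) yield coherent stable isomorphisms; but the weak contractibility of the space of admissible pairs $(D, \phi)$ established in the proof of Lemma \ref{Index Bundles Are Canonical} makes any two such identifications homotopic through isomorphisms, so this is automatic. I do not anticipate a serious obstacle: the content of the lemma is that the kernel construction is functorial in the base parameter, which is immediate once the defining Fredholm data is pulled back.
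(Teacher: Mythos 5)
Your proof is correct and is essentially the paper's argument spelled out in full: the paper's proof is the one-line observation that ``all of the choices made to define the index bundle are compatible under pullbacks,'' and you have simply unwound this by pulling back the family $(D_x,\phi_x)$ along $f$, matching kernels fibrewise, and invoking Lemma~\ref{Index Bundles Are Canonical} for well-definedness.
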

            \begin{proof}
                All of the choices made to define the index bundle are compatible under pullbacks.
            \end{proof}
            \begin{LL}\label{Index Bundle Stability}
                If $(E, F)$ and $(E', F')$ are bundle pairs on $(D^2, \partial D^2) \times X$, then there is a natural isomorphism of virtual vector bundles $$\Ind(E, F) \oplus \Ind(E', F') \cong \Ind(E \oplus E', F \oplus F')$$
            \end{LL}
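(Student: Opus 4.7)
The plan is to exploit the fact that a Cauchy-Riemann operator on a direct sum of bundle pairs can be built as the direct sum of Cauchy-Riemann operators on each factor. Concretely, I would choose continuous families of Cauchy-Riemann operators $D_x$ on $(E,F)|_x$ and $D'_x$ on $(E',F')|_x$, together with stabilisations $\phi_x : \R^N \to W^q(\Lambda^{0,1}T^*D^2 \otimes E|_{D^2 \times \{x\}})$ and $\phi'_x : \R^{N'} \to W^q(\Lambda^{0,1}T^*D^2 \otimes E'|_{D^2 \times \{x\}})$ making the stabilised operators $T_x$ and $T'_x$ fibrewise surjective. Then I would verify that $D_x \oplus D'_x$ is a Cauchy-Riemann operator on $(E \oplus E', F \oplus F')$ (the Leibniz rule is preserved componentwise) and that $\phi_x \oplus \phi'_x : \R^{N+N'} \to W^q(\Lambda^{0,1}T^*D^2 \otimes (E \oplus E')|_{D^2 \times \{x\}})$ is a stabilisation for it, since its image includes both cokernel-killing subspaces.

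The key computation is the elementary identity $\Ker(T_x \oplus T'_x) = \Ker T_x \oplus \Ker T'_x$, which, letting $x$ vary, gives an equality of actual (not just virtual) vector bundles
\[
    \left(\Ind(E,F) + \R^N\right) \oplus \left(\Ind(E',F') + \R^{N'}\right) = \Ind(E \oplus E', F \oplus F') + \R^{N+N'}.
\]
Subtracting $\R^{N+N'}$ from both sides yields the desired isomorphism of virtual vector bundles.

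For naturality, I would appeal to Lemma \ref{Index Bundles Are Canonical}: both sides are defined up to stable isomorphism that is unique up to weakly contractible choice, so it suffices to exhibit \emph{any} natural isomorphism built from a \emph{particular} compatible choice of data, which is exactly what the construction above produces. To see the isomorphism is independent of the auxiliary $(D, \phi, D', \phi')$ up to the allowed ambiguity, I would note that the space of such tuples is contractible (being a product of contractible spaces, by Lemma \ref{CR operators contractible} and the fact that stabilisations form an affine / convex space once the rank is fixed), so any two choices are connected by a path, along which the direct-sum construction varies continuously.

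The main subtlety, rather than a genuine obstacle, is bookkeeping the virtual rank: stabilising $(E,F)$ by $N$ and $(E',F')$ by $N'$ produces a stabilisation of $(E \oplus E', F \oplus F')$ of rank $N+N'$, so the trivial summands cancel correctly when passing to the virtual level. Once this bookkeeping is done, the statement reduces to the linear-algebra fact about kernels of direct sums, and the lemma follows.
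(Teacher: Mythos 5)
Your proof is correct and takes essentially the same approach as the paper, which merely asserts that ``all of the choices made to define the index bundle are compatible under direct sums''; you have simply spelled out that assertion in full (componentwise Leibniz rule, direct sum of stabilisations is a stabilisation, kernel of a direct sum is the direct sum of kernels, rank bookkeeping). One small imprecision: the space of surjective stabilisations of a \emph{fixed} rank is open but not convex, so the parenthetical ``affine / convex'' justification is off; what one actually uses (as in Lemma \ref{Index Bundles Are Canonical}) is weak contractibility of the space of pairs $(D, \phi)$ \emph{after} allowing further stabilisation, which is enough to make your naturality argument go through unchanged.
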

            \begin{proof}
                All of the choices made to define the index bundle are compatible under direct sums.
            \end{proof}
            \begin{CC}\label{ind is stable}
                A stable isomorphism between $(E, F)$ and $(E', F')$ induces a stable isomorphism between $\Ind(E, F)$ and $\Ind(E', F')$.
            \end{CC}
            \begin{LL}\label{Tensoring and Ind}
                Let $G \rightarrow X$ be a real virtual vector bundle over $X$. Then there is a canonical (up to weakly contractible choice) stable isomorphism
                $$\left(\Ind(E, F) \right) \otimes G \cong \Ind \left((E, F) \otimes G \right)$$
            \end{LL}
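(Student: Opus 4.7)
The plan is to unwind the definition of the index bundle and verify that all the linear-algebraic operations involved commute with tensoring by $G$. By additivity of both sides in $G$, I first reduce to the case where $G$ is a genuine (rather than virtual) vector bundle. Since $X$ is a finite CW complex, $G$ sits as a subbundle of some trivial bundle $\R^M$, with an orthogonal complement $G^\perp$ satisfying $G \oplus G^\perp \cong \R^M$; this embedding will be essential for arranging a \emph{trivial} source for the stabilisation below, which is the main subtlety of the argument.

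Next I pick a family of Cauchy--Riemann operators $D_x$ on $(E, F)|_{D^2 \times \{x\}}$ and a stabilisation $\phi_x : \R^N \to W^q(\Lambda^{0,1}T^*D^2 \otimes E|_{D^2 \times \{x\}})$ of rank $N$, so that $T_x := \hat D_x + \phi_x$ is surjective with kernel $\Ind(E, F)_x \oplus \R^N$. Since $G$ is pulled back from $X$, the operator $D_x \otimes \mathrm{id}_{G_x}$ is itself a Cauchy--Riemann operator on $((E, F) \otimes G)|_{D^2 \times \{x\}}$. The naive tensored stabilisation $\phi_x \otimes \mathrm{id}_{G_x}$ has source $\R^N \otimes G_x$, i.e.\ the fibre of $G^N$, which is not (the fibre of) a trivial bundle. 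To fix this, I define
$$\tilde\phi_x : \R^{NM} = \R^N \otimes \R^M \longrightarrow W^q\bigl(\Lambda^{0,1}T^*D^2 \otimes (E \otimes G)|_{D^2 \times \{x\}}\bigr)$$
by $\tilde\phi_x := (\phi_x \otimes \mathrm{id}_{G_x}) \circ (\mathrm{id}_{\R^N} \otimes \pi_x)$, where $\pi_x : \R^M \twoheadrightarrow G_x$ is the orthogonal projection. This depends continuously on $x$ and stabilises $\hat D_x \otimes \mathrm{id}_{G_x}$ by a genuinely trivial bundle of rank $NM$, since its image contains that of the already-surjective operator $(\hat D_x + \phi_x) \otimes \mathrm{id}_{G_x}$.

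The last step is a direct kernel computation. Using the splitting $\R^{NM} = \R^N \otimes G_x \oplus \R^N \otimes G^\perp_x$, on the first summand $\tilde\phi_x$ restricts to $\phi_x \otimes \mathrm{id}_{G_x}$ and on the second summand it vanishes, so
$$\Ker\bigl(\hat D_x \otimes \mathrm{id}_{G_x} + \tilde\phi_x\bigr) \;=\; \Ker(\hat D_x + \phi_x) \otimes G_x \;\oplus\; \R^N \otimes G^\perp_x \;\cong\; (\Ind(E, F) \otimes G)_x \oplus \R^{NM},$$
where the last isomorphism uses $\Ker(\hat D_x + \phi_x) = \Ind(E, F)_x \oplus \R^N$ together with $G_x \oplus G^\perp_x = \R^M$. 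This fibrewise identity upgrades to an isomorphism of vector bundles $\Ind((E, F) \otimes G) \oplus \R^{NM} \cong \Ind(E, F) \otimes G \oplus \R^{NM}$, which yields the desired stable isomorphism. Independence of the choices of $D_x$, $\phi_x$ and $\pi_x$ follows as in Lemma \ref{Index Bundles Are Canonical}, and no further Fredholm input is required beyond what is already in place.
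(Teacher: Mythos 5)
Your proof is correct and is essentially the paper's own argument: the paper likewise reduces to a genuine bundle $G$, picks $G'$ with $G \oplus G' \cong \R^M$, and defines the stabilisation $\theta_x(a \otimes u, b \otimes v) = \phi_x(a) \otimes u$, which under the identification $(\R^N \otimes G_x) \oplus (\R^N \otimes G'_x) \cong \R^{NM}$ is exactly your $\tilde\phi_x = (\phi_x \otimes \mathrm{id}) \circ (\mathrm{id} \otimes \pi_x)$. The kernel computation you give coincides with the paper's; the only difference is that you phrase $G'$ as an orthogonal complement and make the projection explicit, which is a presentational rather than mathematical change.
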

            \begin{proof}
                We assume $G$ is an actual vector bundle, and observe that the general case follows from Lemma \ref{Index Bundle Stability}.\par 
                Let $D$ be a family of Cauchy-Riemann operators on $(E, F)$, and $\phi$ a stabilisation of rank $N$, corresponding to the family of stabilised operators $T$. We pick another vector bundle $G'$ over $X$, along with an isomorphism $G \oplus G' \cong \R^M_X$ for some $M$.\par 
                For each $x$ in $X$, we consider the Cauchy-Riemann operator
                $$D_x \otimes Id_{G_x}: \Gamma\left(\left(E|_{D^2 \times \{x\}}, F|_{\partial D^2 \times \{x\}}\right)\otimes G_x\right) \rightarrow \Omega^{0, 1} \left( E_{D^2 \times \{x\}} \otimes G_x \right)$$
                along with the stabilisation of rank $NM$
                $$\theta_x: \R^{NM} \cong (\R^N \otimes G_x) \oplus (\R^N \otimes G_x') \rightarrow W^q\left(\Lambda^{0, 1}T^* D^2 \otimes E|_{D^2 \times \{x \}} \otimes G_x\right)$$
                sending $(a \otimes u, b \otimes v)$ to $\phi_x(a) \otimes u$.\par 
                Then $\Ind\left(\left(E, F\right) \otimes G\right)_x \oplus \R^{NM}$ is equal to $\Ker S_x$ by definition, where $S_x$ is the stabilised operator 
                $$\left(\widehat{D_x \otimes Id_{G_x}}\right) \oplus \theta_x$$
                But $\Ker S_x$ is isomorphic to
                $$\left(\left(\Ker T_x\right) \otimes G_x\right) \oplus \left(\R^N \otimes G'_x\right)$$
                which is isomorphic to
                $$\left(\left(\Ind(E, F)\right)_x \otimes G_x\right)  \oplus \R^{NM}$$
                A similar argument to Lemma \ref{Index Bundles Are Canonical} shows that this isomorphism is canonical up to weakly contractible choice.
            \end{proof}
            There is a standard bundle pair of rank 1
            $$H = (\mathbb{C}, \delta) \rightarrow (D^2, \partial D^2)$$
            where $\delta(z) = \sqrt{z} \mathbb{R}$ for $z$ in $\partial D^2$.\par 
            We can pick a trivialisation $E \cong \mathbb{C}^n_{D^2 \times \{x\}}$ over $D^2 \times \{x\}$ for a point $x$ in $X$. Then $F|_{\partial D^2 \times \{x\}}$ determines a loop in the space of totally real subspaces of $\mathbb{C}^n$, which is isomorphic to $U(n) / O(n)$. There are isomorphisms
            $$\pi_1 U(n) / O(n) \cong \mathbb{Z}$$
            for all $n$, compatibly with stabilisation in $n$. There are two such choices of isomorphism, and we fix the one such that $H \oplus (\mathbb{C}^{n-1}_{D^2}, \mathbb{R}^{n-1}_{\partial D^2})$ is sent to 1.
            \begin{DD}
                We define the \emph{Maslov index} of $(E, F)$, denoted $\mu(E, F)$, to be the image of $F$ under the isomorphism $\pi_1 U(n) / O(n) \rightarrow \mathbb{Z}$ after choosing a trivialisation of $E$ over a point in $X$. This is well-defined on each path component of $X$, i.e. it is a map $\pi_0 X \rightarrow \Z$.
            \end{DD}
            Using the index theorem, one can compute the virtual rank of the index bundle.
            \begin{LL}[{\cite[Theorem C.1.10]{McDuff-Salamon}}]\label{Virtual Dimension Formula}
                The virtual rank of $\Ind(E, F)$ is
                $$n + \mu(E, F)$$
            \end{LL}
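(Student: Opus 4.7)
The plan is to exploit the additivity of the index under direct sums (Lemma \ref{Index Bundle Stability}) together with its stable invariance (Corollary \ref{Trivial Orientability}), reducing the claim to an index computation on a small list of model bundle pairs. First, I would classify bundle pairs over $(D^2, \partial D^2)$ up to stable isomorphism. Since $D^2$ is contractible, $E$ is trivialisable as a complex bundle, and the totally real sub-bundle $F$ over $\partial D^2$ then becomes a loop in $U(n)/O(n)$, classified up to homotopy by its Maslov index in $\pi_1(U(n)/O(n)) \cong \Z$. Consequently every bundle pair $(E,F)$ of complex rank $n$ and Maslov index $\mu$ is stably isomorphic to $H^{\oplus a} \oplus \bar H^{\oplus b} \oplus (\C^{n-a-b}, \R^{n-a-b})$ for any $a, b \geq 0$ with $a - b = \mu$, where $\bar H$ is the ``conjugate'' bundle pair with Maslov index $-1$.

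By additivity and stable invariance, it now suffices to verify the virtual rank formula on the three model bundle pairs. For $(\C, \R)$, with the standard $\bar\partial$ operator, the kernel consists of holomorphic functions on $D^2$ with real boundary values; by Schwarz reflection these extend to bounded entire functions taking real values on $\R$, hence are real constants, giving a $1$-dimensional kernel. A similar reflection argument gives trivial cokernel, so the virtual rank is $1 = 1 + 0$. For $H$, the kernel consists of holomorphic $f$ with $f(z) \in \sqrt{z}\R$ on $\partial D^2$; a direct power series analysis (or a change of variables $g(z) = f(z)^2/z$, interpreted via a suitable branch) yields a $2$-dimensional kernel and trivial cokernel, giving virtual rank $2 = 1 + 1$. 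For $\bar H$, I would avoid a direct computation by noting that $H \oplus \bar H$ has Maslov index $0$ and complex rank $2$, so is stably isomorphic to $(\C^2, \R^2)$; additivity then forces the virtual rank of $\Ind(\bar H)$ to be $2 - 2 = 0 = 1 + (-1)$.

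With these three model computations in hand, the general formula follows by decomposing an arbitrary $(E, F)$ of rank $n$ and Maslov index $\mu$ into $|\mu|$ copies of $H$ or $\bar H$ together with a trivial bundle pair, and summing the contributions. The main obstacle is the stable classification of bundle pairs over $(D^2, \partial D^2)$ alluded to above; once that is cleanly in place, the rest is bookkeeping plus two explicit one-variable Cauchy-Riemann computations, where the contractibility of the space of Cauchy-Riemann operators (Lemma \ref{CR operators contractible}) lets us make the most convenient choice of $D$ in each model.
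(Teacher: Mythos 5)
The paper does not give a proof of this lemma; it quotes it directly from McDuff--Salamon, Theorem C.1.10. Your proposal is a correct, self-contained proof, and its strategy --- classify bundle pairs over $(D^2, \partial D^2)$ by rank and Maslov index using $\pi_1(U(n)/O(n)) \cong \Z$, decompose (stably) into rank-one models via additivity (Lemma \ref{Index Bundle Stability}) and stable invariance (Corollary \ref{Trivial Orientability}), and compute the index of $\bar\partial$ on each model --- is essentially the same reduction-to-models strategy as the cited source, so it buys a self-contained argument in place of the black-box citation. Two small points to tighten. The substitution $g(z) = f(z)^2/z$ is not holomorphic unless $f(0) = 0$, so the power series argument should be the one you run: the boundary condition $f(z) \in \sqrt{z}\,\R$ on $\partial D^2$ is equivalent to $f(z) = z\,\overline{f(z)}$ there, and matching Fourier coefficients of $f(z) = \sum_{n \geq 0} a_n z^n$ against $z\,\overline{f(z)} = \sum \bar{a}_n z^{1-n}$ forces $a_n = 0$ for $n \geq 2$ and $a_1 = \bar{a}_0$, giving the kernel $\{a + \bar{a}z : a \in \C\}$ of real dimension $2$. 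The trivial-cokernel claims are also correct, but Schwarz reflection alone only treats the kernel; for the cokernel you should either adjust a particular solution of $\bar\partial f = \eta$ by a holomorphic function, which reduces to a solvable Dirichlet/Riemann--Hilbert problem on $\partial D^2$, or double the disc across its boundary and quote Riemann--Roch for $\sO(\mu)$ over $\C\bP^1$ --- the doubling route handles kernel and cokernel simultaneously and is probably the cleanest. With those details supplied, the bookkeeping $(n-a-b)\cdot 1 + a \cdot 2 + b \cdot 0 = n + (a-b) = n + \mu$ gives exactly the claimed formula, matching the paper's sign convention that fixes $\mu(H) = 1$.
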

            We note that therefore $\Ind\,H \cong \R^2$. We fix such an isomorphism (forever).
        \subsection{Orientations}\label{Orientations Section}
            If $(E, F)$ is a virtual bundle pair over $(D^2, \partial D^2) \times Y$ for some (possibly non-compact) space $Y$, for any map from a finite CW complex $X$ to $Y$, $\Ind(E, F)|_X$ is well-defined, and furthermore this is compatible with restriction.
            \begin{DD}
                An \emph{$R$-orientation} on $\Ind(E, F)$ is a choice of $R$-orientation on $\Ind(E, F)|_X$ for all $X$, compatible with restriction. We say $\Ind(E, F)$ is \emph{$R$-orientable} if such a choice exists.
            \end{DD}
            Our goal in this section will be to establish the following two propositions. These will be used later to find conditions under which the tangent bundles to the moduli spaces constructed in Sections \ref{Moduli Subsection} and \ref{Other Moduli Subsection} are $R$-orientable for various ring spectra $R$.
            \begin{PP}\label{Trivial Orientability}
                A stable trivialisation of $(E, F)$ induces a stable trivialisation of $\Ind(E, F)$, and hence an $R$-orientation for any ring spectrum $R$, compatibly with direct sums and pullbacks.
            \end{PP}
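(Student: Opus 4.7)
The plan is to observe that the proposition follows almost immediately from Corollary \ref{Trivial Orientability}. First, I would unpack the definition: a stable trivialisation of $(E, F)$ is a stable isomorphism between $(E, F)$ and the trivial bundle pair $(\C^N, \R^N)$ pulled back from a point to $X$, for some $N$. By Corollary \ref{Trivial Orientability}, $\Ind(E, F)$ is then stably isomorphic to the index bundle of this trivial pair, and by Lemma \ref{Index Bundle Restriction} the latter is the pullback of $\Ind(\C^N, \R^N)$ along $X \to \{*\}$.

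It thus suffices to compute that $\Ind(\C^N, \R^N) \cong \R^N$ over a point. I would choose the standard Cauchy-Riemann operator $\bar{\partial}$; the Schwarz reflection principle identifies $\Ker \hat{\bar{\partial}}$ with the space of constant $\R^N$-valued maps, so this kernel has real dimension $N$. By Lemma \ref{Virtual Dimension Formula} the virtual rank of $\Ind(\C^N, \R^N)$ equals $N + \mu(\C^N, \R^N) = N$, which forces the cokernel of $\hat{\bar{\partial}}$ to vanish. Hence $\Ind(\C^N, \R^N) = \R^N$ as an honest (untwisted, trivially framed) real vector bundle.

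Pulling this identification back to $X$ yields a stable trivialisation of $\Ind(E, F)$, which by Lemma \ref{Orientabiliy Conditions from Rudyak}(3) induces an $R$-orientation for any ring spectrum $R$. Compatibility with pullbacks is immediate from Lemma \ref{Index Bundle Restriction}, and compatibility with direct sums from Lemma \ref{Index Bundle Stability}, since the distinguished trivialisations of the trivial index bundles patch together under both operations (the operator $\bar{\partial}$ on $(\C^N, \R^N) \oplus (\C^M, \R^M)$ is just the direct sum of the individual $\bar{\partial}$ operators, and both sides are pulled back naturally). I do not anticipate any serious obstacle; the only mildly delicate point is confirming the vanishing of the cokernel for the model operator, which drops out of the index computation as noted above, so the entire argument is essentially a packaging of Corollary \ref{Trivial Orientability} with the explicit trivial-pair computation.
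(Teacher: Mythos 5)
Your proposal is correct and takes essentially the same route as the paper: both reduce to the trivial bundle pair $(\C^n, \R^n)$ via Corollary \ref{Trivial Orientability}, choose the standard $\bar\partial$ operator, identify $\Ker\bar\partial$ with $\R^n$ (the paper via boundary evaluation, you via Schwarz reflection and the constant-map description, which amount to the same thing), and deduce surjectivity of $\bar\partial$ from the index formula. The only cosmetic difference is that you factor the computation through a point using Lemma \ref{Index Bundle Restriction} while the paper works fibrewise over $X$ directly.
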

            \begin{proof}
                We write $(\C^k, \R^k)_X$ for the bundle pair $(\C^k_{D^2 \times X}, \R^k_{\partial D^2 \times X})$. By Corollary \ref{ind is stable}, it suffices to prove the result for $(E, F) = (\C^n, \R^n)_X$. Over every point $x$ in the base $X$, we pick the standard Cauchy-Riemann operator 
                $$\bar{\partial}:\Gamma\left(\C^n, \R^n\right) \rightarrow \Omega^{0, 1}\left( \C^n\right)$$
                on sections over the disc $D^2 \times \{x\}$.\par 
                Evaluation at any fixed point $z$ in $\partial D^2$ defines a map $\Ker \bar{\partial} \rightarrow \R^n$, which is an isomorphism (and which does not depend on $z$). Then because the index of this Cauchy-Riemann operator is $n$, (the Sobolev completion of) this Cauchy-Riemann operator must be surjective. Therefore $\Ker \bar{\partial} = \Ind(\C^n, \R^n)_X$ and so this gives us the required trivialisation of $\Ind(E, F)$.
            \end{proof}
            The following proposition seems well-known to experts, but to the author's knowledge a proof has not yet appeared in the literature.
            \begin{PP}\label{Complex Orientability}
                A stable trivialisation of $F$ induces a stable complex structure on $\Ind(E, F)$, compatibly with direct sums and pullbacks.
            \end{PP}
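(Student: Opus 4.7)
My strategy is to reduce to the case of an honest trivialisation of $F$ and then construct the stable complex structure via a doubling construction. First, by Lemma \ref{Index Bundle Stability} the index is additive under direct sums, and by the previous proposition (or a direct computation of kernels and cokernels) $\Ind(\C, \R)$ is the trivial line bundle $\R$, which is certainly stably complex (via $\R \oplus \R \cong \C$). Stabilising $(E, F)$ by enough copies of $(\C, \R)$ promotes a stable trivialisation of $F$ to an honest trivialisation while preserving the question of stable complexity of the index, so I may assume $F$ itself is trivialised.

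Given such a trivialisation, I would apply the standard doubling construction. Writing $S^2 = D^2 \cup_{\partial D^2} \overline{D^2}$ and using the trivialisation-induced identification $E|_{\partial D^2 \times X} = F \oplus iF \cong \R^n \oplus i\R^n$, I glue $E$ on the first disc to its complex conjugate $\bar E$ on the second, obtaining a complex vector bundle $\hat E \to S^2 \times X$ equipped with an antiholomorphic involution $\sigma$ covering the antiholomorphic involution of $S^2$ and whose fixed set on the equator is $F$. Any family of Cauchy-Riemann operators on $(E, F)$ extends canonically to a family of complex-linear Cauchy-Riemann operators on $\hat E$; $\sigma$-equivariant sections of $\hat E$ correspond precisely to sections of the bundle pair $(E, F)$, yielding a canonical identification $\Ind(E, F) \otimes_\R \C \cong \Ind_\C \hat E$ of complex vector bundles on $X$.

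The main obstacle is then to extract from this complexification identity a stable complex structure on $\Ind(E, F)$ itself, as the fact that a complexification $V \otimes_\R \C$ is complex is tautological. I plan to resolve this via a classifying-space argument: stably in $n$, the space of bundle pairs on $(D^2, \partial D^2)$ together with a trivialisation of $F$ is, via the doubling construction above and Bott periodicity $\Omega U \simeq \Z \times BU$, homotopy equivalent to a classifying space for complex $K$-theory. This identifies the index assignment---a priori landing in $KO(X)$---with a natural transformation factoring through $KU(X)$, which is precisely the statement that $\Ind(E, F)$ admits a stable complex structure. Compatibility with direct sums and pullbacks finally follows from the evident naturality of each ingredient (stabilisation, doubling, and the classifying-space identification).
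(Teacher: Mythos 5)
Your reduction to an honest trivialisation of $F$ via stabilising by copies of $(\C,\R)$ is fine and consistent with Lemma \ref{Index Bundle Stability}; and the doubling construction is a standard and correct observation, giving $\Ind(E,F)\otimes_\R\C\cong\Ind_\C\hat E$. You rightly flag that this alone is tautological. The problem is that the classifying-space step, which is where all the content lies, is asserted rather than proved, and as stated it is a non-sequitur. Identifying the \emph{domain} of the index assignment — the set of trivialised bundle pairs over $(D^2,\partial D^2)\times X$ — with $[X,\Z\times BU]$ via clutching and $\Omega^2 BU\simeq\Z\times BU$ is plausible. But the index assignment is then a natural transformation $[X,\Z\times BU]\to[X,\Z\times BO]$, and knowing the source is $KU$ gives no reason whatsoever for the target values to carry complex structures. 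To conclude, you would need to show that this map of classifying spaces is (up to the right normalisation) the realification $\Z\times BU\to\Z\times BO$, or at least factors through it. That identification is the actual theorem; it is not automatic. A quick sanity check already shows some care is needed: $\Ind(\C^n,\R^n)\cong\R^n$ has rank $n$, whereas the naive realification of the corresponding rank-$n$ clutched bundle on $S^2\times X$ has rank $2n$, so the map cannot be the obvious one without passing to virtual bundles and reduced classes.

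This missing identification is precisely what the paper supplies, by a different route: it classifies based bundle pairs (not trivialised ones) by $[\Sigma X, U/O]_*$, invokes de Silva's bijection $\Ind:[\Sigma X,U/O]_*\to[X,BO]_*$ together with his explicit inverse $\beta$, computes that the ``real part'' map $\zeta$ corresponds under $\beta$ to the Möbius-twist map $\varepsilon(E)=\lambda\otimes E-\R\otimes E-\lambda\otimes\R^n$ (i.e.\ multiplication by $\eta\in KO^{-1}(\mathrm{pt})$), and then uses the Wood-type fibration $BU\to BO\to\Omega BO$ to lift. Your route could in principle work, but you would still have to prove a statement of de Silva type — that the families index of real Cauchy–Riemann operators on trivialised bundle pairs agrees with the forgetful map from complex to real $K$-theory under your Bott-periodicity identification — and you have not done so. Until that step is filled in, there is a genuine gap.
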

            \begin{proof}
                Stabilising if necessary, we can assume that we have a choice of (unstable) trivialisation $F \cong \R^n_{\partial D^2 \times X}$.\par 
                This trivialisation of $F$ induces a trivialisation of $E|_{\{1\} \times X}$, which (since the disc is contractible) naturally extends to a trivialisation $E \cong \C^n_{D^2 \times X}$ over $D^2 \times X$. This is compatible with the trivialisation of $F$ over $\{1\} \times X$, but not necessarily over the rest of $\partial D^2 \times X$. \par 
                By identifying $\partial D^2$ with $S^1$, we identify $\partial D^2 \times X / (\{1\} \times X)$ with the suspension $\Sigma X_+$. There is then an induced map $f: \Sigma X_+ \rightarrow U/O$, sending $y$ to the the totally real subspace $F_y$ of $E_y \cong \C^n$. Here $U/O$ is naturally identified with the colimit as $N \rightarrow \infty$ of totally real subspaces of $\C^N$, where a matrix $A$ in $U(N)/O(N)$ corresponds to the totally real subspace $A(\R^N) \subseteq \C^N$. \par 
                The group of based homotopy classes of maps $\Sigma X_+ \rightarrow U/O$ classifies virtual bundle pairs over $(D^2, \partial D^2) \times X$ equipped with a stable trivialisation over $\{1\} \times X$, up to stable isomorphism and stabilisation (i.e. direct summing a copy of $(\C, \R)_X$).
                \begin{MM}
                    The trivialisation of $F$ induces a lift $\tilde{f}: \Sigma X_+ \rightarrow U$, fitting into the commutative diagram
                    \[\begin{tikzcd}
                        & U \arrow[d, "q"] \\
                        \Sigma X_+ \arrow[r, "f"] \arrow[ru, "\tilde{f}"] & U/O
                    \end{tikzcd}\]
                    where $q: U \rightarrow U/O$ is the quotient map.
                \end{MM}
                \begin{proof}[Proof of claim]
                    We will construct $\tilde{f}: \Sigma X_+ \rightarrow U(n)$, then stabilising in $n$ will give the desired map.\par 
                    Let $\phi: \R^n_{\partial D^2 \times X} \xrightarrow{\cong} F \subseteq \C^n_{\partial D^2 \times X}$ be the  trivialisation (which we assume to be orthogonal), let $a_1, \ldots, a_n$ be the standard basis of $\C^n$ and let $b_1, \ldots, b_n$ be the standard basis of $\R^n$. \par 
                    We then define $\tilde{f}(y)$ to be the unitary matrix that sends $a_i$ to $\phi(b_i)$ for all $i$.
                \end{proof}
                There is a natural commutative diagram, from Bott periodicity (using the isomorphisms in \cite{AtiyahK-TheoryandReality}):
                \[\begin{tikzcd}
                    \left[X_+, BU \times \Z\right] \arrow[r] \arrow[d, "g"', "\cong"] & \left[X_+, BO \times \Z\right] \arrow[d, "h"', "\cong"]\\
                    \left[\Sigma X_+, U\right] \arrow[r, "q_*"] & \left[\Sigma X_+, U/O\right]
                \end{tikzcd}\]
                $f$ lives in the bottom right group and $\tilde{f}$ lives in the bottom left group of the above diagram.\par 
                There is a map 
                $$I: \left[\Sigma X_+, U/O \right] \rightarrow \left[X_+, BO \times \Z\right]$$
                sending a map $r$ to the index bundle of the bundle pair determined by $r$.
                \begin{MM} 
                    The composition $I \circ h: [X_+, BO \times \Z] \rightarrow [X_+, BO \times \Z]$ is the identity map.
                \end{MM}
                In particular, this implies that the map $I$ is an isomorphism. This fact, along with its relation to orientations on moduli spaces of holomorphic curves, was first observed by de Silva in \cite{deSilva}.
                \begin{proof}[Proof of claim]
                    Let $H$ be the bundle pair of rank 1 over $(D^2, \partial D^2)$ defined in Section \ref{IndexBundleSection}. It admits a canonical trivialisation over $\{1\}$. Then $h$ is defined to send (the classifying map for) a virtual vector bundle $G$ over $X$ to (the classifying map for) the bundle pair 
                    $$H \otimes G - (\C, \R)_X \otimes G$$
                    equipped with its canonical stable trivialisation over $\{1\} \times X$.\par 
                    Then by the properties of index bundles developed in Section \ref{IndexBundleSection}, we have isomorphisms
                    \begin{align*}
                        \Ind(h(G)) & \cong \Ind\left(H \otimes G - (\C, \R)_X \otimes G\right)\\
                        & \cong \Ind(H) \otimes G - \Ind(\C, \R)_X \otimes G\\
                        & \cong \R^2_X \otimes G - \R_X \otimes G\\
                        & \cong G
                    \end{align*}
                    as required.
                \end{proof}
                Then since $h^{-1}(f)$ is the classifying map of $\Ind(E, F)$, $g^{-1}(\tilde{f})$ is the choice of stable complex structure that we desired. Note that all choices we made were compatible with direct sums and pullbacks.
            \end{proof}
    \section{The moduli spaces $\mathcal{P}
    $ and $\sW$}\label{Moduli Section}
        \subsection{Construction of the moduli spaces}\label{Moduli Subsection}
            Let $C$ be a convex domain in the upper half plane in $\mathbb{C}$, with smooth boundary $\partial C$ containing 0. Let $f: \partial C \rightarrow [0,1]$ be a smooth map sending $0$ to $0$, and let $J$ be an $\omega$-tame almost complex structure on $M$ which is convex at infinity.\par
            \begin{DD}
                We define $\sD_{f, C}$ to be the space of smooth maps $u: C \rightarrow M$ such that for all $z$ in $\partial C$, $u(z)$ lies in $\psi^{f(z)}(L)$.\par 
                We define $\pi$ to be the natural evaluation map $\pi: \sD_{f, C}(J) \rightarrow L$ sending $u$ to $u(0)$.
            \end{DD}
            \begin{DD}
                We define $\sU_{f, C}(J)$ to be the space of maps $u$ in $\sD_{f, C}$ which are $J$-holomorphic, namely
                $$J \circ du = du \circ j$$
                where $j$ is the complex structure on $C \subseteq \C$.\par 
                We call the triple $(C, f, J)$ the \emph{moduli data}.
            \end{DD}
            Let $G$ be a fixed convex domain in $\mathbb{C}$ with smooth boundary, such that both line segments $(-\eta, \eta) \subset \R$ and $i + (-\eta, \eta) \subset i + \R$ are contained in $\partial G$ for some $\eta > 0$. Let $G_\pm$ be $G \cap \mathbb{C}_{\pm \re \geq 0}$.\par 
            For $l \geq 0$, define $Z_l$ to be $[0, 1]i + [-l, l]$, and $G_l$ to be 
            $${Z_l \cup (G_+ + l) \cup (G_- - l)}$$
            as shown below.\par 
            {\centering\begin{tikzpicture}[line cap=round,line join=round,>=triangle 45,x=1.0cm,y=1.0cm]
    \draw (0,0)-- (5,0);
    \draw (0,1)-- (5,1);
    \draw (5,1)-- (5,0);
    \draw (0,1)-- (0,0);            
    \draw (5,-3)-- (0,-3);
    \draw (0,-2)-- (5,-2);
    \draw (2.5,0.5) node[anchor=north west] {\parbox{0 cm}{$$ 0 $$}};
    \draw (2.5,1.5) node[anchor=north west] {\parbox{0 cm}{$$ i $$}};
    \draw (5,0.5) node[anchor=north west] {\parbox{0 cm}{$$ l $$}};
    \draw (0,0.5) node[anchor=north west] {\parbox{0 cm}{$$ -l $$}};
    \draw (-1,2) node[anchor=north west] {\parbox{0 cm}{$$ Z_l\mathchar\numexpr"6000+`:\relax $$}};
    \draw (2.5,-2.5) node[anchor=north west] {\parbox{0 cm}{$$ 0 $$}};
    \draw (2.5,-1.5) node[anchor=north west] {\parbox{0 cm}{$$ i $$}};
    \draw (5,-2.5) node[anchor=north west] {\parbox{0 cm}{$$ l $$}};
    \draw (0,-2.5) node[anchor=north west] {\parbox{0 cm}{$$ -l $$}};
    \draw (-1,-0.5) node[anchor=north west] {\parbox{0 cm}{$$ G_l\mathchar\numexpr"6000+`:\relax  $$}};
    \draw [shift={(0,-2.5)}] plot[domain=1.57:4.71,variable=\t]({1*0.5*cos(\t r)+0*0.5*sin(\t r)},{0*0.5*cos(\t r)+1*0.5*sin(\t r)});
    \draw [shift={(5,-2.5)}] plot[domain=-1.57:1.57,variable=\t]({1*0.5*cos(\t r)+0*0.5*sin(\t r)},{0*0.5*cos(\t r)+1*0.5*sin(\t r)});
    \begin{scriptsize}
        \fill [color=black] (0,0) circle (1.5pt);
        \fill [color=black] (5,0) circle (1.5pt);
        \fill [color=black] (5,1) circle (1.5pt);
        \fill [color=black] (0,1) circle (1.5pt);
        \fill [color=black] (2.5,0) circle (1.5pt);
        \fill [color=black] (0,-2) circle (1.5pt);
        \fill [color=black] (5,-2) circle (1.5pt);
        \fill [color=black] (0,-3) circle (1.5pt);
        \fill [color=black] (5,-3) circle (1.5pt);
        \fill [color=black] (2.5,1) circle (1.5pt);
        \fill [color=black] (2.5,-2) circle (1.5pt);
        \fill [color=black] (2.5,-3) circle (1.5pt);
    \end{scriptsize}
\end{tikzpicture}\par }
            We now define a 1-parameter family of moduli data $(C_l, f_l, J_l)$ as follows.
            $$
                C_l := 
                \begin{cases}
                    G_{l - 1} & \text{ if } l \geq 1\\
                    G_0 & \text{ if } l \leq 1
                \end{cases}
            $$
            $$
                f_l(z) := 
                \begin{cases}
                    \im z & \textit{ if } l \geq 1\\
                    l \im z & \textit{ if } l \leq 1
                \end{cases}
            $$
            and $J_l$ is defined to be some choice of $\omega$-tame almost complex structure (convex at infinity if $M$ is non-compact) on $M$ which is independent of $l$ (though we will apply an $l$-dependent generic perturbation to $J_l$ soon).\par 
            As a 1-parameter family of domains in $\C$ and maps to $[0, 1]$, $C_l$ and $f_l$ are continuous everywhere in $l$ and vary smoothly in $l$ except at 1, so we pick a small smoothing of these families supported near $l = 1$, by reparametrising in the $l$ direction.\par 
            Relative exactness of $L$ in $M$ gives us a uniform energy bound.
            \begin{LL}\label{Uniform Energy Bound}
                There exists some $A$ in $\R$ such that for any $l \geq 0$ and any smooth map $u$ in $\sD_{f_l, C_l}$, the topological energy $\int_{C_l} u^* \omega$ is bounded above by $A$.
            \end{LL}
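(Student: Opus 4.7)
The plan is to use the Hamiltonian isotopy $\psi^t$ to straighten the moving boundary condition on $u$, apply relative exactness to kill the straightened term, and bound the defect introduced by the straightening by the total variation of $f_l$.

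Let $H_t$ be the (compactly supported) Hamiltonian generating $\psi^t$, so that $H_t \equiv 0$ near $t = 0$ and $t = 1$. Since $f_l$ is defined on all of $C_l$ (as $\mathrm{Im}(z)$ for $l \geq 1$ and $l \cdot \mathrm{Im}(z)$ for $l \leq 1$), I can set $\tilde u(z) := \psi^{-f_l(z)}(u(z))$; this is a smooth map $C_l \to M$ with $\tilde u(\partial C_l) \subset L$. Because each $C_l$ is a convex planar domain and hence diffeomorphic to $D^2$, the map $\tilde u$ represents an element of $\pi_2(M, L)$, so relative exactness forces $\int_{C_l} \tilde u^* \omega = 0$.

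Next I would compute $u^*\omega - \tilde u^*\omega$ pointwise. Writing $u(z) = \psi^{f_l(z)}(\tilde u(z))$ and splitting $du = df_l \cdot X_{H_{f_l}}(u) + d\psi^{f_l}(d\tilde u)$, the symplecticity of $\psi^t$ cancels the ``pure tangent'' contribution and the identity $\iota_{X_H}\omega = -dH$, combined with the vanishing $df_l \wedge df_l = 0$, produces
$$u^*\omega - \tilde u^*\omega = d\bigl(H_{f_l(z)}(u(z)) \, df_l\bigr).$$
Integrating over $C_l$ and applying Stokes together with $\int \tilde u^*\omega = 0$ yields
$$\int_{C_l} u^* \omega = \int_{\partial C_l} H_{f_l(z)}(u(z)) \, df_l.$$

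Finally, since $H_t$ is compactly supported on $M$ for $t$ in a compact interval, $\|H\|_\infty := \sup_{t,x} |H_t(x)| < \infty$, so the above integral is bounded in absolute value by $\|H\|_\infty \int_{\partial C_l} |df_l|$. The main obstacle is to verify that this total variation is uniformly bounded in $l$; for $l \geq 1$, $f_l = \mathrm{Im}$ is constant on the two horizontal straight segments of $\partial C_l$ and sweeps $[0,1]$ monotonically along each of the two semicircular caps, contributing $2$; for $l \leq 1$, $f_l = l \cdot \mathrm{Im}$ on $\partial G_0$ contributes at most $2l \leq 2$. Setting $A := 2\|H\|_\infty$ completes the argument.
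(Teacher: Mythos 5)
Your argument is correct and, as far as I can tell, it is exactly the argument the paper is implicitly relying on: the paper gives no proof of its own and defers to the proof of Hofer's Lemma~2, which is the same straighten-by-$\psi^{-f_l}$ computation you carry out (relative exactness kills the pinned-boundary disc, Stokes converts the defect to a boundary term bounded by $\|H\|_\infty$ times the total variation of $f_l$ along $\partial C_l$). The identity $u^*\omega - \tilde u^*\omega = d\bigl(H_{f_l(z)}(u(z))\,df_l\bigr)$ checks out, and the total-variation bound of $2$ is also right, though the caps are general convex arcs of $\partial G$ rather than literal semicircles; what one actually uses is that $\im z$ has total variation exactly $2$ around a convex curve lying in the strip $\{0\le\im z\le 1\}$, so the bound $A=2\|H\|_\infty$ holds.
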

            The proof is a minor adaptation of that of \cite[Lemma 2]{Hofer}. We remark that if the isotopy $\psi^t$ were through symplectomorphisms instead of Hamiltonian diffeomorphisms, the proof of this lemma would fail.
            \begin{proof}

                We choose $H: M \times [0,1] \rightarrow M$ a time-dependent Hamiltonian generating $\psi^t$, meaning that for $x$ in $M$ and $t$ in $[0,1]$,
                $$\omega\left(\cdot, \frac{d}{dt}(\psi^t)(x)\right) = dH_t(x)$$
                and we choose $H$ to be constant outside compact subsets of $M$.\par 
                Choose $l \geq 0$. There is some $\lambda$ in $[0,1]$ such that $f_l(z) = \lambda \im z$ for all $z$ in $\partial C_l$. Choose $u$ in $\sD_{f_l, C_l}$, and define $w: C_l \rightarrow M$ by
                $$w(s+it) := (\psi^{\lambda t})^{-1}(u(s+it))$$
                Then $w$ sends $\partial C_l$ to $L$ so by relative exactness of $L$, we must have $\int_{C_l} w^* \omega = 0$. \par 
                The topological energy of $u$ is
                \begin{align*}
                    \int_{C_l} u^* \omega &
                    = \int\limits_{s+it \in C_l} \omega(\partial_s u, \partial_t u) ds \, dt\\
                    & = \int\limits_{s+it \in C_l} \omega\left(d\psi^{\lambda t}(\partial_s w), d\psi^{\lambda t}(\partial_t w) + \frac{d}{dt}(\psi^{\lambda t})(w)\right) ds\,dt\\
                    & = \int\limits_{s+it\in C_l} \omega(\partial_s w, \partial_t w) ds\,dt + \int\limits_{s+it \in C_l} \omega\left(d\psi^t(\partial_s w), \lambda \frac{d}{dt}(\psi^t)(w)\right) ds\,dt \\
                    & = \int_{C_l} w^* \omega + \lambda \int\limits_{s+it \in C_l} dH_t\left(d\psi^t(\partial_s w)\right) ds\,dt\\
                    & = 0 + \lambda \int_{t=0}^1 H_t\left(\psi^t(w(s_{max}(t)+it))\right) - H_t\left(\psi^t(w(s_{min}(t)+it))\right) dt\\
                    & \leq \max_{t,x} H_t(x) - \min\limits_{t, x} H_t(x) =: A
                \end{align*}
                where $s_{max}(t)$ is the largest $s$ such that $s + it$ lies in $C_l$, and $s_{min}(t)$ is the smalles $s$ such that $s + it$ lies in $C_l$. 
            \end{proof}
            We now pick a Riemannian metric on $L$, with injectivity radius $\varepsilon$ and distance function $d$. Using this energy bound, \cite[Proposition 3]{Hofer} then directly implies the following result, which should be viewed as a form of Gromov compactness. Roughly, Hofer shows that for large $l$, these moduli spaces, when restricted to $Z_l$, live close to holomorphic strips with boundary on $L$ and $\psi^1(L)$, which we know are constant since $L = \psi^1(L)$ is relatively exact.
            \begin{LL}\label{Hofer Bound}
                There is some $c > 1$ such that for all $l \geq c$ and $u$ in $\sU_{f_l, C_l}(J_l)$, 
                $$d(u(i), u(0)) < \varepsilon$$
            \end{LL}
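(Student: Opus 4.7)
The strategy is to argue by contradiction and extract a Gromov-type limit. Suppose the conclusion fails, so there exists a sequence $l_k \to \infty$ and $u_k \in \sU_{f_{l_k}, C_{l_k}}(J_{l_k})$ with $d(u_k(i), u_k(0)) \geq \varepsilon$. The point of working on the long strips $Z_{l_k} \subseteq C_{l_k}$ is that $f_{l_k}$ is identically $0$ on the bottom edge $[-l_k, l_k]$ and identically $1$ on the top edge $i + [-l_k, l_k]$, so the restrictions $u_k|_{Z_{l_k}}$ are $J_l$-holomorphic strips with boundary on the pair of Lagrangians $L$ and $\psi^1(L) = L$ (setwise). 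The plan is to translate these strips so that the marked points $0$ and $i$ stay in a compact region, and use the uniform energy bound of Lemma \ref{Uniform Energy Bound} together with relative exactness to force convergence to a constant map.

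First I would reduce to the strip picture. After translating the $u_k$ so that their domains contain the fixed substrip $[0,1]i + [-l_k, l_k]$ about the origin, standard elliptic estimates for $J$-holomorphic curves with totally real boundary conditions give uniform $C^\infty_{\mathrm{loc}}$ bounds provided no bubbling occurs. The uniform energy cap $A$ from Lemma \ref{Uniform Energy Bound} controls the total energy. I would then pass to a subsequential Gromov limit defined on the infinite strip $[0,1]i + \R$, with boundary conditions on $L$ on the bottom and on $\psi^1(L) = L$ on top.

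The second step is to use relative exactness of $L$ to conclude the limit is constant. Any such holomorphic strip $v\colon [0,1]i + \R \to M$ with both boundary components on $L$ has a well-defined symplectic area, and relative exactness forces $\int v^* \omega = 0$ (the boundary components lift to the universal cover of $L$, and one argues via a primitive of $\omega$ on $M$ that the area vanishes on homotopy classes with endpoints on $L$). Since $J$ is $\omega$-tame, vanishing symplectic area implies $v$ is constant. This gives $d(u_k(i), u_k(0)) \to 0$, contradicting our standing assumption.

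The main obstacle is ruling out bubbling in the limit. A sphere bubble or disc bubble with boundary on some $\psi^t(L)$ would be holomorphic with positive symplectic area, but relative exactness of $L$ (combined with the fact that every $\psi^t(L)$ is Hamiltonian isotopic to $L$ and so inherits relative exactness) forbids this: any such bubble would be constant. A more delicate point is a bubble forming at a boundary point where $f_l$ transitions between values; here one uses that the boundary conditions $\psi^{f_l(z)}(L)$ still lie in the relatively exact class, and that $J_l$ is tame, so the area-forces-energy argument still applies. Since Hofer established the relevant rescaling-and-compactness package in the cited Proposition 3 under essentially these hypotheses, invoking that result directly yields the bound.
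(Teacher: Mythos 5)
Your proof takes essentially the same route as the paper: the statement is ultimately a citation of Hofer's Proposition~3, and both you and the paper identify the same key geometric picture --- that for large $l$ the central strip $Z_l \subseteq C_l$ carries boundary conditions on the pair $(L, \psi^1(L)) = (L, L)$, so a Gromov-type limit near the marked points $0, i$ must be a finite-energy holomorphic strip with both boundaries on $L$, which is constant by relative exactness plus tameness of $J$. The contradiction argument you set up and the observation that bubbles (spherical, disc, or on some intermediate $\psi^t(L)$) are forbidden by relative exactness are exactly the heuristics behind Hofer's compactness statement, and you correctly fall back on the cited proposition for the full rescaling-and-compactness package.

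A few small imprecisions in the sketch, none fatal since the final step is the citation: (i)~no translation is needed --- the domains $C_l$ are already centred at $0$, with $0$ and $i$ sitting in the middle of the neck; (ii)~the statement that one uses ``a primitive of $\omega$ on $M$'' is off, since $\omega$ need not be exact; the correct input is that a finite-energy strip with both boundaries on $L$ closes up to a class in $\pi_2(M, L)$, on which $\omega$ vanishes by hypothesis; (iii)~the direct Gromov-compactness sketch glosses over strip breaking along the lengthening neck (broken strips with constant ``chords'' on $L = \psi^1(L)$), which is the part of the analysis that Hofer's Proposition~3 actually handles carefully; it is worth flagging that this is what the citation is doing for you.
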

            \begin{proof}
                We apply \cite[Proposition 3]{Hofer}, to the Lagrangians $L$ and itself. In this setting, for $l > 1$, what Hofer calls $\Omega_{J_l}(L, L)$ is the space of finite-energy $J_l$-holomorphic strips with boundary sent to $L$, and so consists only of constant maps to $L$, and in particular does not depend on $l$.\par
                Then take the open neighbourhood $U$ of $\Omega_{J_l}(L, L)$ to be the space of maps $u: \R \times [0,1] \rightarrow M$ such that $d(u(i), u(0)) < \varepsilon$ (where we equip these mapping spaces with the weak $C^\infty$ Whitney topology). \cite[Proposition 3]{Hofer} says that there is some $c > 1$ such that for all $l \geq c$ and $u$ in $\sU_{f_l, C_l}(J_l)$, $r_l(u)$ lies in $U$, where Hofer defines $r_l(u): \R \times [0,1] \rightarrow M$ so that in particular $r_l(u)$ and $u$ agree when restricted to neighbourhoods of $i[0, 1]$, and so in particular $u$ also lies in $U$.
            \end{proof}
            Standard techniques in \cite{McDuff-Salamon} show that for a generic ($l$-dependent) perturbation of $J_l$, $\mathcal{W}' := \bigsqcup\limits_{l \geq 0} \sU_{f_l, C_l}(J_l)$ is a smooth manifold with boundary $\sU_{f_0, C_0}(J_0)$, which consists only of constant maps $C_0 \rightarrow L$ and hence $\pi: \sU_{f_0, C_0}(J_0) \rightarrow L$ is a diffeomorphism. If we choose the perturbation small enough then Lemma \ref{Hofer Bound} still holds, by Lemma \ref{Uniform Energy Bound} along with Gromov compactness.\par 
            Let $\sU_l = \sU_{f_l, C_l}(J_l)$, and let $p: \sW' \rightarrow \R$ send $\sU_l$ to $l$.
            \begin{LL}
                Each $\sU_l$ is compact and $p$ is a proper map.
            \end{LL}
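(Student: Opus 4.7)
The plan is to apply Gromov compactness, whose essential inputs here are threefold: (i) the uniform energy bound of Lemma \ref{Uniform Energy Bound}; (ii) a uniform $C^0$-bound on images in $M$; and (iii) the absence of bubbling, which will follow from relative exactness. Once these three are in place, both statements follow from (family versions of) the standard stable map compactness theorem.

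To show each $\sU_l$ is compact, I would take a sequence $u_n$ in $\sU_l$ and first bound their images uniformly. Since $\psi^t$ is compactly supported, all the Lagrangians $\psi^{f_l(z)}(L)$ lie in a fixed compact subset of $M$; together with the convexity at infinity of $J_l$ on any Liouville factor of $M$, the standard maximum principle for plurisubharmonic functions confines the $u_n$ to a fixed compact $K \subseteq M$. Combined with Lemma \ref{Uniform Energy Bound} and Gromov compactness, a subsequence converges in the sense of stable maps to a nodal $J_l$-holomorphic limit. To rule out bubble components, I would observe that relative exactness forces $[\omega]$ to vanish on $\pi_2(M, L)$ and hence (since $\omega|_L = 0$) on $\pi_2(M)$ as well; any sphere bubble, or disc bubble with boundary on a Hamiltonian-isotopic (hence still relatively exact) copy of $L$, would have to have strictly positive area and is therefore excluded. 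The limit is then a smooth $J_l$-holomorphic map satisfying the prescribed moving boundary condition, i.e.\ an element of $\sU_l$.

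For properness of $p$, suppose $(u_n, l_n)$ is a sequence with $l_n$ in a compact subset of $\R_{\geq 0}$, and pass to a subsequence with $l_n \to l$. Because the data $(C_l, f_l, J_l)$ is smooth in $l$ (after the small smoothing near $l = 1$), a neighbourhood of $l$ in the parameter space trivialises: I would pick a smooth family of diffeomorphisms $C_{l_n} \to C_l$ depending continuously on $l_n$ and pull back the sequence to get holomorphic curves on the fixed domain $C_l$ with smoothly varying, convergent boundary and almost complex data. The $C^0$-bound and energy bound remain uniform in $l$, and the bubbling exclusion is unchanged, so the same Gromov compactness argument yields a limit lying in $\sU_l \subseteq p^{-1}(\{l\})$.

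The only genuinely technical step is the bubbling analysis, which is handled wholesale by relative exactness; the reparametrisation needed to deal with the varying family $\{C_l\}$ is essentially bookkeeping, and the $C^0$-control follows from compact support of $\psi^t$ and convexity of $J_l$ at infinity.
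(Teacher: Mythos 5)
Your proposal correctly identifies the energy bound, the $C^0$-bound via the maximum principle for the Liouville factors, and the bubble-exclusion via relative exactness, all of which are implicit inputs. But there is a genuine gap: you apply Gromov compactness as if it directly yields a convergent subsequence of maps $u_n\colon C_l \to M$ satisfying a \emph{moving} Lagrangian boundary condition. Standard Gromov compactness (including the stable-map version you invoke) is stated for a fixed totally real boundary condition, and moreover only produces convergence \emph{modulo precomposition by automorphisms of the domain} — and $C_l$ is biholomorphic to a disc with a single marked boundary point, so its automorphism group is non-compact. Since the boundary condition $u(z) \in \psi^{f_l(z)}(L)$ is not invariant under these automorphisms, you cannot simply reparametrise to land back in $\sU_l$, and you have not explained why the reparametrising sequence is controlled or why the limit still satisfies the pointwise-moving condition.

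The paper handles exactly this by Gromov's graph trick: it passes to $\tilde{M} = M \times \C$ with the \emph{fixed} totally real submanifold $\tilde{L} = \bigcup_z \psi^{f_l(z)}(L) \times \{z\}$, embeds $\sU_l$ as the subspace of graphs, applies Gromov compactness there (so the moving condition becomes a fixed one, and your bubble and energy arguments now apply verbatim), and then observes that the reparametrisations $\phi_i$ themselves appear as the projections $\tau \circ \tilde u_i \circ \phi_i$, hence converge to a biholomorphism $\phi$ of $C_l$; composing with $\phi^{-1}$ undoes the reparametrisation and shows the limit is again a graph, i.e.\ an element of $\sU_l$. This is the step your proof omits, and it is precisely where the non-trivial content of the lemma lies. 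The rest of your argument (energy bound, $C^0$ confinement, exclusion of sphere and disc bubbles by relative exactness of each $\psi^t(L)$, and the trivialisation of the family $C_l$ for properness) is compatible with the paper's argument and would need no change once the graph trick is inserted.
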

            \begin{proof}
            
                We will show each $\sU_l$ is compact, a similar argument shows that $p$ is a proper map.\par  
                Let $u_i$ be a sequence in $\sU_l$. Gromov compactness as in \cite{Frauenfelder-Zehmisch} implies that this contains a convergent subsequence, but only after precomposing with a sequence of holomorphic automorphisms of the domain.\par 
                Let $\tilde{M}$ be $M \times \C$, with almost complex structure $J_l$ times the standard complex structure on $\C$. Let $\tau: \tilde{M} \rightarrow \C$ be the projection map onto the second factor. Let $\tilde{L}$ be 
                $$\bigcup\limits_{z \in \partial C_l} \psi^{f_l(z)}(L) \times \{z\}$$
                a totally real submanifold of $\tilde{M}$.\par 
                We let $\tilde{\sU}_l$ be the moduli space of holomorphic discs in $\tilde{M}$ with boundary in $\tilde{L}$. 
                There is an embedding $\theta: \sU_l \hookrightarrow \tilde{\sU}_l$ sending $u$ to the map $\tilde{u}$ which sends $z$ to $(u(z), z)$. This clearly is a bijection with the subset of $\tilde{\sU}_l$ consisting of sections of $\tau|_{\tau^{-1}C_l}$.\par  
                For $u$ in $\sU_l$, the energy of $\tilde{u}$ is equal to the energy of $u$ plus the area of $C_l$, so in particular by Lemma \ref{Uniform Energy Bound}, the energies of the $\tilde{u}_i$ are uniformly bounded. Therefore by \cite[Theorem 1.1]{Frauenfelder-Zehmisch}, there is a subsequence (which we will continue to call $\tilde{u_i}$, by abuse of notation) and a sequence of holomorphic automorphisms $\phi_i$ of $C_l$, such that $\tilde{u}_i \circ \phi_i$ lives in $\tilde{\sU}_l$ for all $i$, and $\tilde{u}_i \circ \phi_i$ converges to some $v$ in $\tilde{\sU}_l$.\par 
                $\tau \circ \tilde{u}_i \circ \phi_i = \phi_i$ also converges to $\phi := \tau \circ v$ which is a holomorphic automorphism of $C_l$, so $\tilde{u}_i$ converges to $v \circ \phi^{-1}$, which lies in the image of $\theta$ and so $v = \tilde{u}$ for some $u$ in $\sU_l$. Therefore $u_i$ converges to $u$ in $\sU_l$.
            \end{proof}
            We pick $c' > c$ (where $c$ is from Lemma \ref{Hofer Bound}), a regular value of $p$. We then define $\sW$ to be the path component of $\sU_0 = L$ in $p^{-1}[0, c']$, and $\sP$ to be 
            $$\sW \cap \sU_{c'}$$
            Then, by construction, $\pi: \mathcal{W} \rightarrow L$ is a cobordism from $Id: L \rightarrow L$ to $\pi: \mathcal{P} \rightarrow L$ over $L$. Furthermore whilst $\sW'$ may have had different path components of different dimensions, $\sW$ is a manifold (with boundary) of dimension $n + 1$, and $\sP$ is a manifold of dimension $n$.
            \begin{proof}[Proof of Lemma \ref{Moduli Space Works}]
                We define $\pi': \sP \rightarrow L$ to send $u$ to $u(i)$. Then by Lemma \ref{Hofer Bound}, this is homotopic to $\pi$. We now write down a homotopy 
                $$H: \sP \times [0, 1] \rightarrow L$$
                from $\psi^1 \circ \pi$ to $\pi'$.\par 
                Let $\gamma: [0, 1] \rightarrow \partial C_{c'}$ be some fixed path from $0$ to $i$. For $u$ in $\sP$ and $t$ in $[0, 1]$, we define $H(u, t)$ to be
                
                $$\psi^1\left(\left(\psi^{\im \gamma(t)}\right)^{-1}\left(u\left(\gamma(t)\right)\right)\right)$$
                
            \end{proof}
            \begin{RR}
                \cite[Theorem 1]{Hofer} shows that even without any genericity assumption on the almost complex structure, the map $\pi: \sP \rightarrow L$ induces an injective map 
                $$H^*(L; \Z/2) \cong \check{H}^*(L; \Z/2) \rightarrow \check{H}^*(\sP; \Z/2)$$
                where $\check{H}^*$ denotes \v{C}ech cohomology. The statement of Lemma \ref{Hofer Bound} still holds and hence so does Lemma \ref{Moduli Space Works}. From this it follows that $\psi^1|_L$ induces the identity map on $H^*(L; \Z/2)$.\par 
                This can be adapted to other generalised cohomology theories too, as in \cite{Me+Amanda}.
            \end{RR}
        \subsection{Orientations on the moduli space}\label{Orienty Subsection}
            In this subsection, we will prove Proposition \ref{Orientation Criteria}.\par 
            We define $\sD_0$ to be the space of triples $(f, C, u)$, where $C$ is a convex domain in the upper half plane in $\C$ with smooth boundary $\partial C$ containing 0, ${f: \partial C \rightarrow [0, 1]}$ is a smooth map sending 0 to 0, and $u$ is an element of $\sD_{f, C}$. Note that because the choices of $C$ and $f$ were contractible, each inclusion $\sD_{f, C} \hookrightarrow \sD_0$ is a homotopy equivalence.\par 
            There is a tautological smooth fibre bundle over $\sD_0$ with fibre over $(f, C, u)$ given by $C$. The structure group of this bundle is the group of orientation-preserving diffeomorphisms of $D^2$ which send 1 to itself. This group is contractible, so we can pick a trivialisation $\Phi$ from $D^2 \times \sD_0$ to this bundle, sending $1$ in $D^2$ to $0$ in each $C$. Note the space of such choices is contractible.\par 
            For each $(f, C)$, we choose some extension $\tilde{f}$ of $f$ to the entirety of $C$. Since for each $(f, C)$ the space of such extensions is contractible, we can make some continuous choice over the whole of $\sD_0$, and furthermore this choice is unique up to contractible choice.\par 
            There is then an evaluation map 
            $$ev: \left(D^2, \partial D^2\right) \times \sD_0 \rightarrow (M, L)$$
            sending $(z, u)$ in $D^2 \times \sD_{f, C}$ to 
            $$\left(\psi^{\tilde{f}(\Phi(z))}\right)^{-1}\left(u\left(\Phi(z\right))\right)$$
            Because the space of choices for the extensions $\tilde{f}$ was contractible, this defines $ev$ uniquely up to a contractible space of choices.\par 
            There is a bundle pair over $\left(D^2, \partial D^2 \right) \times \sD_0$ given by $ev^*\left(TM, TL\right)$. For a map from a finite CW complex $g: X \rightarrow \sD_0$, we write $\Ind$ for the index of the pullback of this bundle pair along $g$. Note that the virtual rank of $\Ind$ may be different on different components of $X$ if the evaluation map lands in different components of $\sD_0$.\par 
        
            Because the linearisation of the Cauchy-Riemann equation along its zero set is a Cauchy-Riemann operator, if $J$ is regular, $T\sU_{f, C}(J)$ is stably isomorphic to the index bundle $\Ind$, and similarly $T\sW$ is stably isomorphic to $\Ind \oplus \R$.\par
            Fix $X$ a finite CW complex and $g: X \rightarrow \sD_0$ some map. In the rest of this section we will find conditions under which $\Ind$ is $R$-orientable over $X$ for various ring spectra $R$.
            \begin{PP}
                If there is a real vector bundle $E$ over $M$, along with stable isomorphisms 
                $TM \cong E \otimes \C$ and $TL \cong E|_L$ compatible with each other, then $\Ind-\pi^*TL$ is stably trivial and hence $R$-orientable for any ring spectrum $R$.
            \end{PP}
            
            \begin{proof}
                We observe that the bundle pair $ev^*(TM, TL) - ev^*(E \otimes \C, E)$ is stably trivial, and that $ev^*(E \otimes \C, E) \cong q^*\pi^*(E \otimes \C, E)$ where $q: (D^2, \partial D^2) \times X \rightarrow X$ collapses the disc to a point. Therefore $\Ind(ev^*(E \otimes \C, E)) \cong \pi^*E \cong \pi^* TL$. \par 
                Then $\Ind - \pi^* TL$ is stably isomorphic to $\Ind(ev^*(TM, TL) - ev^*(E \otimes \C, E))$, which is the index bundle of a stably trivial bundle pair, which is stably trivial by Proposition \ref{Trivial Orientability}.
            \end{proof}
            \begin{PP}
                If there is a stable trivialisation $TL \cong \R^n_{L_3}$ over the 3-skeleton of $L$ which (after applying $\cdot \otimes \C$) extends to a stable trivialisation $TM \cong \C^n_{M_4}$ over the 4-skeleton of $M$, then $\Ind$ and hence $\Ind - \pi^*TL$ are $KO$-orientable.
            \end{PP}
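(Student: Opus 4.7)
The strategy is to prove that $\Ind$ is Spin over $X$ for every map $g: X \to \sD_0$ from a finite CW complex, which gives $KO$-orientability; applying the same reasoning to $TL$ (which is Spin by hypothesis) and invoking multiplicativity of orientations (Lemma \ref{Orientabiliy Conditions from Rudyak}(5)) then yields $KO$-orientability of $\Ind - \pi^*TL$. Since $w_1(\Ind) \in H^1(X; \Z/2)$ and $w_2(\Ind) \in H^2(X; \Z/2)$, and cellular cohomology gives that $H^i(X; \Z/2) \to H^i(X^{(2)}; \Z/2)$ is injective for $i \leq 2$, it is enough to trivialise $\Ind|_{X^{(2)}}$ stably. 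By the first proposition of this subsection (a stable trivialisation of a bundle pair induces one of its index bundle), it suffices in turn to construct a stable trivialisation of $(E, F)$ as a bundle pair over $(D^2, \partial D^2) \times X^{(2)}$.

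The central step is a cellular approximation argument for maps of pairs. Fix a CW structure on $M$ with $L$ as a subcomplex, extending the CW structure on $L$ carrying the hypothesised trivialisation. The evaluation map $ev$ is a map of pairs
$$(D^2 \times X^{(2)}, \partial D^2 \times X^{(2)}) \to (M, L),$$
whose source has dimension $4$ and whose boundary subcomplex has dimension $3$. Cellular approximation through maps of pairs lets us homotope $ev$ so that $ev(D^2 \times X^{(2)}) \subseteq M^{(4)}$ and $ev(\partial D^2 \times X^{(2)}) \subseteq L^{(3)}$. Pulling back the hypothesised stable trivialisations of $TM|_{M^{(4)}}$ and $TL|_{L^{(3)}}$ under this homotoped $ev$ then produces the required stable trivialisation of $(E, F)|_{(D^2, \partial D^2) \times X^{(2)}}$.

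The one delicate point is verifying that the pulled-back trivialisations glue into a genuine trivialisation of the bundle pair, i.e., that on $\partial D^2 \times X^{(2)}$ the complexification of the real trivialisation of $F$ agrees with the restriction of the complex trivialisation of $E$. This is exactly the compatibility built into the hypothesis --- using the Lagrangian identification $TM|_L \cong TL \otimes \C$ of complex bundles --- after stabilising both trivialisations to a common rank $N$. The bundle pair is then stably isomorphic to $(\C^N, \R^N)$ over $(D^2, \partial D^2) \times X^{(2)}$, which completes the argument. The main obstacle is the bookkeeping required to align the stabilisations so that the compatibility on the overlap is preserved; once this is set up carefully, the rest is formal.
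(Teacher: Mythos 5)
Your proof is correct and follows the same route as the paper: restrict $ev$ to a map of pairs $(D^2, \partial D^2) \times X^{(2)} \to (M, L)$, homotope it into $(M^{(4)}, L^{(3)})$ by cellular approximation of pairs, pull back the hypothesised trivialisations to stably trivialise the bundle pair and hence $\Ind$ over $X^{(2)}$, and conclude $KO$-orientability (the paper cites Atiyah--Bott--Shapiro, Theorem 12.3, where you reason via vanishing of $w_1$ and $w_2$ on the 2-skeleton, but these are the same thing). Your explicit handling of the ``and hence'' for $\Ind - \pi^* TL$ via multiplicativity of orientations fills in a step the paper leaves implicit.
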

            \begin{proof}
                We pick a 3-skeleton $L_3$ of $L$, a 4-skeleton $M_4$ of $M$ containing $L_3$, and a 2-skeleton $X_2$ of $X$. Then the map of pairs $ev: (D^2, \partial D^2) \times X_2 \rightarrow (M, L)$ can be homotoped to land in $(M_4, L_3)$, and we can apply Proposition \ref{Trivial Orientability} to see that $\Ind$ admits a stable trivialisation over a 2-skeleton. Then by \cite[Theorem 12.3]{Atiyah-Bott-Shapiro}, $\Ind$ is $KO$-orientable.
            \end{proof}
            \begin{LL}\label{Complex Structures Skeleta}
                If there is a stable trivialisation $TL \cong \R^n_{L_{i+1}}$ over the $(i + 1)$-skeleton of $L$, $\Ind$ admits a stable complex structure over the $i$-skeleton of $X$.
            \end{LL}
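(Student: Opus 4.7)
The plan is to reduce to Proposition \ref{Complex Orientability}, which produces a stable complex structure on $\Ind(E,F)$ from a stable trivialisation of its totally real boundary part $F$. In the present set-up the relevant bundle pair is $ev^*(TM, TL)$, whose totally real part on $\partial D^2 \times X$ is $(ev|_{\partial D^2 \times X})^* TL$. So, after restricting the base to the $i$-skeleton $X_i$, the task reduces to producing a stable trivialisation of this pullback bundle over $\partial D^2 \times X_i$.

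First I would restrict everything to $X_i$ and view the evaluation as a map of pairs
$$ev : (D^2, \partial D^2) \times X_i \to (M, L).$$
The boundary restriction $ev|_{\partial D^2 \times X_i}$ is a map of CW complexes whose source has dimension $i+1$, since $\partial D^2 \simeq S^1$ contributes a single additional cell dimension to the product CW structure. By cellular approximation, this boundary map is homotopic to one landing in the $(i+1)$-skeleton $L_{i+1}$ of $L$. By the homotopy extension property for the cofibration $\partial D^2 \times X_i \hookrightarrow D^2 \times X_i$, this homotopy extends to a homotopy of $ev$ itself as a map of pairs, with the interior free to range through $M$. The stable class of $\Ind$ is unaffected by such a homotopy, by Lemma \ref{Index Bundle Restriction} together with the homotopy invariance of pullback bundle pairs.

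After the homotopy, $(ev|_{\partial D^2 \times X_i})^* TL$ is pulled back from $TL|_{L_{i+1}}$, which by hypothesis admits a stable trivialisation. Pulling this trivialisation back and applying Proposition \ref{Complex Orientability} component by component on $X_i$ then produces the desired stable complex structure on $\Ind$ over $X_i$. I do not expect any substantive obstacle here; the only care needed is to ensure the cellular approximation is carried out as a homotopy of pairs and that the pullback trivialisation is fed correctly into the naturality clause of Proposition \ref{Complex Orientability}. All of the real analytic content has already been absorbed into that proposition.
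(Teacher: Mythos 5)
Your proposal is correct and follows the same line as the paper: identify the real part $F$ of the bundle pair with a pullback of $TL$ along the boundary evaluation $\partial D^2\times X_i\to L$, homotope this map (cellular approximation, since the source has dimension $i+1$) into the $(i+1)$-skeleton $L_{i+1}$, conclude $F$ is stably trivial, and apply Proposition~\ref{Complex Orientability}. The only difference is that your step of extending the boundary homotopy over all of $D^2\times X_i$ via the homotopy extension property is not needed: Proposition~\ref{Complex Orientability} takes as input merely a stable trivialisation of $F$, and the fact that $ev|_{\partial D^2\times X_i}$ is homotopic to a map landing in $L_{i+1}$ already forces $F\cong (ev|_{\partial D^2\times X_i})^*TL$ to be stably trivial, with no change to the bundle pair $(E,F)$ or to $\Ind$ required. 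Your extra step is harmless but makes the argument sound more delicate than it is.
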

            \begin{proof}
                The real part of the bundle pair $(ev^* TM, ev^* TL) \rightarrow (D^2, \partial D^2) \times X$ is pulled back from a map $\partial D^2 \times X \rightarrow L$. Letting $X_i$ be some choice of $i$-skeleton of $X$, $ev: \partial D^2 \times X_i\rightarrow L$ can be homotoped to have image in the $(i+1)$-skeleton of $L$, so by Proposition \ref{Complex Orientability}, $\Ind$ admits a stable complex structure over $X_i$.
            \end{proof}
            \begin{CC}\label{aaaa}
                \begin{enumerate}
                    \item If $L$ admits a spin structure, $\Ind$ and $\Ind - \pi^* TL$ are orientable.
                    \item If $L$ admits a spin structure, $\Ind$ and $\Ind - \pi^* TL$ are $KU$-orientable.
                \end{enumerate}
            \end{CC}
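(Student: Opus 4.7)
My plan is to derive both statements from Lemma \ref{Complex Structures Skeleta} after first boosting the spin structure. A spin structure on $L$ is by definition a stable trivialisation of $TL$ over a 2-skeleton of $L$, and because $\pi_2(O) = 0$ the obstruction in $H^3(L; \pi_2(O))$ to extending this over a 3-skeleton vanishes. So I may assume $TL$ is stably trivial over a 3-skeleton $L_3 \subseteq L$, and then applying Lemma \ref{Complex Structures Skeleta} with $i = 2$ produces a stable complex structure on $\Ind$ over any 2-skeleton $X_2 \subseteq X$.

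For part (1), a stable complex structure on $\Ind|_{X_2}$ is in particular an orientation, so $w_1(\Ind) = 0$ (since $w_1$ is determined on the 1-skeleton); therefore $\Ind$ is orientable. Since $L$ is orientable (being spin), $\pi^*TL$ is orientable, and by Lemma \ref{Orientabiliy Conditions from Rudyak}(5) we conclude that $\Ind - \pi^*TL$ is orientable.

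For part (2), I would translate $KU$-orientability of a real virtual vector bundle $E$ into the $\mathrm{spin}^c$ condition: $w_1(E) = 0$ and $W_3(E) := \beta(w_2(E)) = 0 \in H^3(X;\mathbb{Z})$. We already have $w_1(\Ind) = 0$, and the stable complex structure on $X_2$ produces an integer class $c_1 \in H^2(X_2;\mathbb{Z})$ whose mod-$2$ reduction equals $w_2(\Ind)|_{X_2}$. Since $H^2(X;\mathbb{Z}/2) \hookrightarrow H^2(X_2;\mathbb{Z}/2)$ is injective (as $X/X_2$ has no cells of dimension $\leq 2$), it will suffice to lift $c_1$ to an integral class on all of $X$; the main obstacle is showing that this lift exists, equivalently that the primary obstruction in $H^3(X; \pi_2(O/U)) = H^3(X; \mathbb{Z})$ to extending our specific stable complex structure past $X_2$ vanishes. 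Once $\Ind$ is $KU$-oriented, $\pi^*TL$ is also $KU$-oriented (as $L$ is spin, hence $\mathrm{spin}^c$), so $\Ind - \pi^*TL$ is too by additivity of $\mathrm{spin}^c$ structures.
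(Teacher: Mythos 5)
For part (1), your argument is correct and very close to the paper's: the paper applies Lemma \ref{Complex Structures Skeleta} directly with $i = 1$ (a spin structure gives a stable trivialisation of $TL$ over a $2$-skeleton of $L$, so $\Ind$ has a stable complex structure over a $1$-skeleton of $X$, hence $w_1(\Ind) = 0$), whereas you first lift the trivialisation to a $3$-skeleton and use $i = 2$. Both routes work, and your use of Lemma \ref{Orientabiliy Conditions from Rudyak}(5) to pass from $\Ind$ to $\Ind - \pi^* TL$ is the right finishing step.

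For part (2), your proposal is incomplete, and you flag the gap yourself. A stable complex structure on $\Ind|_{X_2}$ only controls $w_1(\Ind)$; to conclude that $\Ind$ is $\mathrm{spin}^c$ you still need $W_3(\Ind) = \beta\bigl(w_2(\Ind)\bigr) = 0$ in $H^3(X; \Z)$, and since $H^3(X_2; \Z) = 0$, restricting to $X_2$ carries no information about this class. You correctly locate the remaining obstruction in $H^3\bigl(X; \pi_2(O/U)\bigr) = H^3(X; \Z)$, but you do not show it vanishes, and the obvious fix --- getting the complex structure over a $3$-skeleton of $X$, where $H^3(X;\Z) \hookrightarrow H^3(X_3;\Z)$ is injective --- would by Lemma \ref{Complex Structures Skeleta} require a stable trivialisation of $TL$ over a $4$-skeleton, which a spin structure does not supply (there is a $\pi_4 BO = \Z$ obstruction). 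For comparison, the paper disposes of this step by asserting that $\Ind$ ``admits a $\mathrm{spin}^c$ structure, i.e.\ it admits a stable complex structure over any $2$-skeleton'' before citing Atiyah--Bott--Shapiro; read in the direction needed here, that asserted equivalence is exactly the step your proof is missing, so your caution is well-placed, but as written your proposal does not establish the claim.
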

            \begin{proof}
                \begin{enumerate}
                    \item If $L$ admits a spin structure, $TL$ is stably trivial over the 2-skeleton of $L$, and so $\Ind$ admits a complex structure over the 1-skeleton of $X$ and hence is orientable.
                    \item A vector bundle over $L$ is stably trivial over an $i$-skeleton $L_i$ iff the classifying map $L \rightarrow BO$ is nullhomotopic when restricted to $L_i$. Since $\pi_3 BO = 0$, if $L_2 \rightarrow BO$ is nullhomotopic, so is $L_3 \rightarrow BO$. So if $L$ admits a spin structure, $TL$ admits a stable trivialisation over any 3-skeleton. Therefore $\Ind$ admits a spin\textsuperscript{c} structure, i.e. it admits a stable complex structure over any 2-skeleton. Then by \cite[Theorem 12.3]{Atiyah-Bott-Shapiro}, $\Ind$ is $KU$-orientable.
                \end{enumerate}
            \end{proof}
            We use the computation of $w_1$ of the index bundle of a bundle pair from \cite{Georgieva} to weaken the hypothesis of Corollary \ref{aaaa}(1).
            \begin{LL}
                If $w_2(L)(x) = 0$ whenever $x$ is a homology class represented by a 2-torus $S^1 \times \partial D^2$ in $L$ which extends to a solid torus $S^1 \times D^2$ in $M$, then $\Ind - \pi^* TL$ is orientable.
            \end{LL}
            \begin{proof}
                $w_1(\Ind - \pi^*TL)$ vanishes by \cite[Theorem 1.1]{Georgieva}.
            \end{proof}
            Together, the results in this section combine to form Proposition \ref{Orientation Criteria}.
    \section{Monodromy in the looped case}\label{Looped Section}
        \subsection{General set-up}
            We define $L_{\psi^1}\subseteq S^1 \times M$ to be the space of pairs $(t, x)$, where $t$ lies in $S^1 = \R / \Z$ and $x$ lies in $\psi^t(L)$. We let $q: L_{\psi^1} \rightarrow S^1$ be the projection map to the first co-ordinate. This is a model for the mapping torus of $\psi^1|_L$. We denote its vertical tangent space by $T^vL_{\psi^1}$. \par 
            The Hamiltonian flow induces a natural family of maps which we denote by $\Psi^t: L_{\psi^1} \rightarrow L_{\psi^1}$ for $t$ in $\R$, which live over the identity map on $S^1$ for $t$ in $\Z$. More explicitly, pick $s$ in $S^1$, $x$ in $\psi^s(L)$ and $t$ in $\R$, and write $t + s = m + \tau$, where $m$ is in $\Z$ and $\tau$ is in $[0, 1)$. Then we define $\Psi^t(s, x)$ to be 
            $$\left(\tau, \psi^\tau \circ (\psi^1)^{\circ m} \circ (\psi^s)^{-1}(x)\right)$$
            Then $\Psi^0$ is the identity map, $\Psi^t$ is continuous in $t$ and $\Psi^t \circ \Psi^s = \Psi^{t + s}$ for all $t, s$ in $\R$. Furthermore $\Psi^1|_{q^{-1}(\{0\})} = \psi^1|_L$. $\Psi^t$ should be thought of as parallel transporting by a distance $t$ around the base of the fibre bundle $L_{\psi^1} \twoheadrightarrow S^1$.\par 
            The free loop space $\sL L_{\psi^1}$ of $L_{\psi^1}$ is naturally a fibre bundle over $L_{\psi^1}$. We define $\sL$ to be the restriction of this to the fibre over $0$ in $S^1$. Then $\sL$ is naturally a fibre bundle over $L$ and splits into a disjoint union
            $$\sL = \bigsqcup\limits_{j \in \Z} \sL_j$$
            where $\sL_j$ is the space of loops in $\sL$ with winding number $j$ about $S^1$. \par 
            $\Psi^1$ induces a map $\sL \rightarrow \sL$ which preserves winding numbers, $\Psi^1(\sL_i) = \sL_i$ for all $i$, and we will also write $\Psi^1$ for this restriction.\par 
            We define $\sL'_j$ to be subspace of $\sL_j$ given by loops $\gamma$ such that the composition $S^1 \rightarrow L_{\psi^1} \rightarrow S^1$ is given by $z \mapsto j z$, and $\sL'$ to be the disjoint union of all $\sL'_j$. Then $\Psi^1$ sends $\sL'$ to $\sL'$.
            \begin{LL}\label{Reparametrisation}
                The inclusion $\sL' \hookrightarrow \sL$ is a homotopy equivalence.
            \end{LL}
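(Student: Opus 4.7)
The plan is to realise the inclusion $\sL' \hookrightarrow \sL$ as the fibre inclusion of a Hurewicz fibration over a contractible base. Define $B := \mathrm{Maps}^*(S^1, S^1)$ to be the space of based maps (sending $0$ to $0$), and write $B_j$ for its components, indexed by the degree $j \in \Z$. Let $\Pi: \sL \to B$ send $\gamma$ to $q \circ \gamma$. This is well-defined since the condition $\gamma(0) \in L = q^{-1}(0)$ is equivalent to $(q \circ \gamma)(0) = 0$, and by construction $\Pi$ sends $\sL_j$ into $B_j$, with $\Pi^{-1}(f_j) = \sL'_j$ for $f_j(z) := jz$ the standard degree-$j$ map.

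The first step is to check that $\Pi$ is a Hurewicz fibration on each component. Since $q: L_{\psi^1} \to S^1$ is a smooth locally trivial bundle, it is a Hurewicz fibration; the induced map $q_*: \mathrm{Maps}(S^1, L_{\psi^1}) \to \mathrm{Maps}(S^1, S^1)$ is then also a Hurewicz fibration (the homotopy lifting problem for $q_*$ is adjoint, via local compactness of $S^1$, to one for $q$ itself on a product with $S^1$). The map $\Pi$ is obtained from $q_*$ by pullback along $B \hookrightarrow \mathrm{Maps}(S^1, S^1)$, and pullbacks of Hurewicz fibrations are Hurewicz fibrations.

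The second step is to observe that each $B_j$ is contractible: lifting to the universal cover identifies $B_j$ with the space of continuous $\tilde{f}: \R \to \R$ with $\tilde{f}(0) = 0$ and $\tilde{f}(t+1) = \tilde{f}(t) + j$, and the straight-line homotopy $\tilde{f}_s(t) := (1-s)\tilde{f}(t) + sjt$ stays in this space and descends to a contraction of $B_j$ onto $\{f_j\}$.

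Combining these, $\Pi|_{\sL_j}: \sL_j \to B_j$ is a Hurewicz fibration over a contractible base, so the inclusion of any fibre --- in particular $\sL'_j \hookrightarrow \sL_j$ --- is a homotopy equivalence. Taking disjoint unions over $j \in \Z$ yields the lemma. The only step of any real substance is the fibration property of $\Pi$, which is routine; an alternative approach, avoiding the adjunction, would be to build a deformation retraction of $\sL_j$ onto $\sL'_j$ explicitly by lifting the straight-line contraction of $B_j$ using path lifting for $q$.
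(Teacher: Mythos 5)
Your proof is correct but takes a genuinely different route from the paper's. The paper constructs an explicit deformation retraction: for $\gamma \in \sL_j$ it takes the lift $f_\gamma \colon [0,1] \to \R$ of $q \circ \gamma$ with $f_\gamma(0)=0$ and $f_\gamma(1)=j$, and sets $F(\gamma)(t) = \Psi^{jt - f_\gamma(t)}(\gamma(t))$; since $\Psi^s$ raises the $S^1$-coordinate by $s$, the reparametrised loop projects to $f_j$ and $F$ restricts to the identity on $\sL'_j$, with the obvious affine homotopy $(\gamma,s,t) \mapsto \Psi^{s(jt - f_\gamma(t))}(\gamma(t))$ giving the retraction. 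That is exactly the ``lift the straight-line contraction of $B_j$'' you offer as an alternative at the end. Your primary argument instead abstracts this to the statement that $\Pi \colon \sL_j \to B_j$ is a Hurewicz fibration (a pullback of $q_*$, which is a fibration by adjunction over the locally compact $S^1$) over a contractible base, so the fibre inclusion $\sL'_j \hookrightarrow \sL_j$ is an equivalence. Both proofs are correct: the paper's is more self-contained and hands you the explicit homotopy inverse $F$ (which incidentally makes it transparent that $F$ commutes with $\Psi^1$, useful in nearby arguments), while yours cleanly separates the geometric input (contractibility of $B_j$) from the formal machinery (Hurewicz fibration over a contractible base) at the cost of citing the exponential-law argument for $q_*$.
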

            \begin{proof}
                We first define a continuous map $f: \sL_j \rightarrow \left\{\mathrm{continuous\,maps}\, [0, 1] \rightarrow \R\right\}$. For all $\gamma$ in $\sL_j$, $f(\gamma)$ is uniquely determined by the following properties.
                \begin{enumerate}
                    \item $f(\gamma)(0) = 0$
                    \item $f(\gamma)(1) = j$
                    \item The composition 
                    $$[0, 1] \rightarrow S^1 \xrightarrow{\gamma} L_{\psi^1} \xrightarrow{q} S^1$$
                    agrees with the composition
                    $$[0, 1] \xrightarrow{f(\gamma)} \R \rightarrow S^1$$
                \end{enumerate}
                We define a map $F: \sL \rightarrow \sL'$ by
                $$F(\gamma)(t) = \Psi^{jt - f(\gamma)(t)}(\gamma(t))$$
                for $t$ in $[0, 1]$. Then this is a homotopy inverse to the inclusion $\sL' \hookrightarrow \sL$.
            \end{proof}
            By Lemma \ref{Reparametrisation}, $\sL'_0 \xrightarrow{\simeq} \sL_0$, and we see that the action of $\Psi^1$ on $\sL_0$ restricts to the action of $\psi^1|_L$ on $\sL L = \sL'_0$.
            Therefore to study the action of $\psi^1|_L$ on $\sL L$, it suffices to study the action of $\Psi^1$ on $\sL_0$.

            \begin{LL}\label{Monodromy is trivial on L1}
                $\Psi^1$ acts as the identity on $\sL_{\pm1}$ up to homotopy.
            \end{LL}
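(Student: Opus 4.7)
The plan is to exhibit an explicit homotopy by exploiting the one-parameter family $\{\Psi^s\}_{s \in \R}$ of ``parallel transports'' around the mapping torus. The key observation is that $\Psi^s$ does not preserve the subspace $\sL'_{\pm 1}$ for $s \notin \Z$ --- it shifts the base coordinate by $s$ --- but it still lands in $\sL_{\pm 1}$, because the winding number about $S^1$ is unchanged. This gives room to slide $\Psi^1$ through the larger ambient space $\sL_{\pm 1}$ back to the identity.

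Concretely, by Lemma \ref{Reparametrisation} the inclusion $\sL'_{\pm 1} \hookrightarrow \sL_{\pm 1}$ is a homotopy equivalence, and it intertwines the $\Psi^1$ action on each side. So it suffices to produce a homotopy (inside $\sL_{\pm 1}$) from the inclusion $\sL'_{\pm 1} \hookrightarrow \sL_{\pm 1}$ to $\Psi^1$ viewed as a map $\sL'_{\pm 1} \to \sL_{\pm 1}$. I would define
\[
H: \sL'_{\pm 1} \times [0, 1] \to \sL_{\pm 1}, \qquad H(\gamma, s)(t) = \Psi^s(\gamma(t)),
\]
and then check three routine points: (i) $H(\gamma, s)$ is a continuous loop, since $\gamma(0) = \gamma(1)$ and $\Psi^s$ is defined on all of $L_{\psi^1}$; (ii) $H(\gamma, s)$ has winding number $\pm 1$, because $q \circ H(\gamma, s) : S^1 \to S^1$ differs from $q \circ \gamma$ by translation by $s$, which does not change the degree; and (iii) $H(\gamma, 0) = \gamma$ since $\Psi^0 = \mathrm{id}$, while $H(\gamma, 1) = \Psi^1 \circ \gamma$ by definition.

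Composing $H$ with a homotopy inverse to $\sL'_{\pm 1} \hookrightarrow \sL_{\pm 1}$ supplied by Lemma \ref{Reparametrisation} then produces the required homotopy of self-maps of $\sL_{\pm 1}$. I do not anticipate a substantive obstacle: the content of the lemma is essentially the tautological fact that ``parallel transport once around $S^1$'' can be contracted to the identity when it acts on loops that already cover the base of $L_{\psi^1}$, since we can freely flow the loop through all intermediate fibres without worrying about staying in any distinguished one. Note that the analogous argument fails manifestly for $\sL_0$, consistent with the fact that Theorem \ref{Looped Theorem} on $\sL L \simeq \sL_0$ requires the considerably deeper moduli-theoretic input set up later in the section.
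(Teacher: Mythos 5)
There is a genuine gap in your homotopy. You claim $H(\gamma, s)(t) = \Psi^s(\gamma(t))$ lands in $\sL_{\pm 1}$, citing only that winding number is preserved. But $\sL_{\pm 1}$ is defined by more than winding number: by construction $\sL$ is the restriction of the fibration $\sL L_{\psi^1} \rightarrow L_{\psi^1}$ to the fibre $L = q^{-1}(0)$, so every loop in $\sL_{\pm 1}$ must have its basepoint over $0 \in S^1$. Your homotopy violates this: for $\gamma \in \sL'_1$ with $\gamma(0) \in L$, one has $q\bigl(H(\gamma, s)(0)\bigr) = q\bigl(\Psi^s(\gamma(0))\bigr) = s$, which is nonzero for $s \in (0, 1)$, so $H(\gamma, s) \notin \sL_1$ for intermediate $s$. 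This should have been a red flag: if one drops the basepoint condition, your formula equally well ``proves'' the statement for every $\sL_j$ including $\sL_0$, which you yourself note cannot be correct.

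The paper implements your heuristic, but correctly, by pairing $\Psi^s$ with a compensating rotation of the loop parameter: $H(\gamma, s)(t) = \Psi^s(\gamma(t - s))$. For $\gamma \in \sL'_1$, where $q(\gamma(t)) = t$, this gives $q\bigl(H(\gamma, s)(t)\bigr) = (t - s) + s = t$, so $H$ stays in $\sL'_1$ for all $s$ (not just in $\sL_1$), and since $\gamma$ is $1$-periodic the endpoint is $H(\cdot, 1) = \Psi^1$. The reparametrisation is available precisely because the winding number is $\pm 1$: after $\Psi^s$ shifts the loop by $s$ in the base direction, a degree-$\pm 1$ loop still passes over $0 \in S^1$ and admits a canonical rotation of its parameter putting the basepoint back there. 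For winding number $0$ no such rotation exists --- a loop in $\sL'_0$ lies entirely in $L$, and $\Psi^s$ carries it wholesale to $q^{-1}(s)$ --- which is the genuine reason the argument isolates $\sL_{\pm 1}$, rather than the looser intuition of ``not worrying about any distinguished fibre.''
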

            \begin{proof}
                Note by Lemma \ref{Reparametrisation} it suffices to prove $\Psi^1$ acts as the identity on $\sL'_{\pm1}$.\par 
                We define $H: \sL'_{\pm1} \times [0, 1] \rightarrow \sL'_{\pm1}$ by
                $$H(\gamma, s)(t) = 
                \Psi^{\pm s }(\gamma(t - s))$$
                Then $H(\cdot, 0)$ is the identity and $H(\cdot, 1)$ is $\Psi^{\pm1}$. In the case of $+1$, $H$ is the required homotopy, and in the case of $-1$, $\Psi^1 \circ H$ is the required homotopy.
            \end{proof}
        \subsection{A Chas-Sullivan type product}
            In this section we construct a product structure on the Thom spectrum $\sL^{-TL}$ (defined shortly). Under Hypothesis \ref{Looped Orientation Assumption}, the moduli spaces $\sM$ and $\sN$ (constructed in Section \ref{Other Moduli Subsection}) will have fundamental classes living in $R_*(\sL^{-TL})$. This structure will be crucial in proving Theorem \ref{Looped Theorem}.\par 
            We first note that by \cite{Milnor}, all spaces considered so far in this section are homotopy equivalent to CW complexes, and furthermore they are all Hausdorff and paracompact.\par 
            We denote the pullback of the virtual vector bundle $-TL \rightarrow L$ to $\sL$ also by $-TL$. We will define ring and module structures on the spectra $\sL^{-TL}$ and $\sL$, following \cite{Cohen}. These will be needed in Sections \ref{sec:5.6} and \ref{sec:5.7} to prove Theorem \ref{Looped Theorem}. \par 
            We have a commutative diagram:
            \[\begin{tikzcd}
                \sL_j \times_L \sL_k \arrow[r, "\Delta", hookrightarrow] \arrow[d, "c", hookrightarrow] & \sL_j \times \sL_k\\
                \sL_{j+k} & \\
            \end{tikzcd}\]
            where $\sL_j \mathop{\times}_L \sL_k$ is the fibre product of $\sL_j$ and $\sL_k$ over $L$, $\Delta$ is the natural inclusion map into $\sL_j \times \sL_k$, and $c$ is a concatenation map: given $(\gamma, \delta)$ in $\sL_j \mathop{\times}_L \sL_k$ and $t$ in $[0, 1]$, we define
            $$c(\gamma, \delta)(t) = 
            \begin{cases}
                \gamma(2t) & \mathrm{if}\, t \leq \frac{1}{2}\\
                \delta(2t - 1) & \mathrm{if}\, t \geq \frac{1}{2}\\
            \end{cases}$$
            Then $\sL_j \mathop{\times}_L \sL_k$ admits a tubular neighbourhood in $\sL_j \times \sL_k$ with normal bundle $TL$, and by applying  Definition \ref{PT without smoothness} with the virtual vector bundles $-TL \boxplus -TL$ and $-TL \boxplus 0$ over $\sL_j \times \sL_k$, we obtain two maps, both of which we denote by $\mu$:
            $$\mu: \sL_j^{-TL} \wedge \sL_k^{-TL} \xrightarrow{\Delta_\Shrek} \left(\sL_j \times_L \sL_k\right)^{-TL} \xrightarrow{c} \sL_{j + k}^{-TL}$$
            $$\mu: \sL_j^{-TL} \wedge \Sigma^\infty_+ \sL_k \xrightarrow{\Delta_\Shrek} \Sigma^\infty_+ \left(\sL_j \times_L \sL_k\right) \xrightarrow{c} \Sigma^\infty_+ \sL_{j + k}$$
            which fit together to give maps
            $$\mu: \sL^{-TL} \wedge \sL^{-TL} \rightarrow \sL^{-TL}$$
            $$\mu: \sL^{-TL} \wedge \Sigma^\infty_+\sL \rightarrow \Sigma^\infty_+\sL$$
            It follows from the coassociativity of $\Delta$ that these define a homotopy associative product on $\sL^{-TL}$, and a left module action of this on $\Sigma^\infty_+\sL$.\par 
            Let $i: L \hookrightarrow \sL$ be the inclusion of constant loops into $\sL_0$. The composition
            $$[L]: \Sphere \mathop{\rightarrow}^{[L]} L^{-TL} \mathop{\rightarrow}^{i} \sL^{-TL}$$
            defines a unit for $\sL^{-TL}$, similarly to \cite{Cohen}. Together, we have
            \begin{PP}
                $\sL^{-TL}$ is a ring spectrum, and $\Sigma^\infty_+\sL$ is a left module over it.\par 
                Furthermore if $R$ is a ring spectrum, $\sL^{-TL} \wedge R$ is a ring spectrum, and $\Sigma^\infty_+\sL \wedge R$ is a left module over it.
            \end{PP}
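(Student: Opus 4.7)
The plan is to verify the ring axioms for $\sL^{-TL}$ and the module axioms for $\Sigma^\infty_+ \sL$ at the level of homotopy, by comparing each axiom with a corresponding statement about fibre products and concatenation of loops in $\sL$, and then invoking naturality of the Pontryagin--Thom construction (Definition \ref{PT without smoothness}) to transport these space-level homotopies to the spectra.

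For homotopy associativity of $\mu$, I would consider the iterated fibre product $\sL \times_L \sL \times_L \sL$, which admits a tubular neighbourhood in $\sL \times \sL \times \sL$ with normal bundle (the pullback of) $TL \oplus TL$. Applying Definition \ref{PT without smoothness} once to this triple diagonal yields a single Pontryagin--Thom collapse
$$\Delta^{(3)}_{\Shrek}\mathchar\numexpr"6000+`:\relax \sL^{-TL} \wedge \sL^{-TL} \wedge \sL^{-TL} \rightarrow \left(\sL \times_L \sL \times_L \sL\right)^{-TL}$$
and each of the two associativity orderings $\mu \circ (\mu \wedge Id)$ and $\mu \circ (Id \wedge \mu)$ factors through $\Delta^{(3)}_\Shrek$, followed by a triple concatenation map. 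The two triple concatenations differ only by a reparametrisation of $[0,1]$, which is homotopic through reparametrisations to the identity; this provides the required homotopy. For the unit axiom, one checks that the composite
$$\sL^{-TL} \simeq \Sphere \wedge \sL^{-TL} \xrightarrow{[L] \wedge Id} \sL^{-TL} \wedge \sL^{-TL} \xrightarrow{\mu} \sL^{-TL}$$
is homotopic to the identity. At the space level this corresponds to concatenating a loop with the constant loop at its basepoint, and the obvious reparametrisation homotopy lifts through Pontryagin--Thom because the inclusion $L \hookrightarrow \sL$ as constant loops identifies the fibre product $L \times_L \sL_j$ with $\sL_j$ compatibly with the normal bundle $TL$. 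The right unit axiom is analogous.

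The module axioms for $\Sigma^\infty_+ \sL$ follow by the same argument: associativity of the action $\mu: \sL^{-TL} \wedge \Sigma^\infty_+ \sL \to \Sigma^\infty_+ \sL$ with the ring multiplication is witnessed by the same triple fibre product (with one factor carrying the trivial virtual bundle rather than $-TL$), and unitality is again concatenation with a constant loop. The "furthermore" statement is formal once the first part is established: for any ring spectrum $R$, the functor $(\cdot) \wedge R$ is symmetric monoidal up to homotopy, so it sends homotopy ring spectra to homotopy ring spectra and homotopy modules to homotopy modules; explicitly, one defines the multiplication on $\sL^{-TL} \wedge R$ as
$$\sL^{-TL} \wedge R \wedge \sL^{-TL} \wedge R \xrightarrow{\mathrm{swap}} \sL^{-TL} \wedge \sL^{-TL} \wedge R \wedge R \xrightarrow{\mu \wedge \mu_R} \sL^{-TL} \wedge R$$
with unit $\eta_R \wedge [L]$, and similarly for the module action.

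The main obstacle I expect is the careful verification that the Pontryagin--Thom collapse for the iterated diagonal factors (up to homotopy of $R$-module maps) as the iterated single collapses in either order; this requires checking that the tubular neighbourhood of $\sL \times_L \sL \times_L \sL$ in $\sL \times \sL \times \sL$ can be chosen compatibly with both the tubular neighbourhoods of $\sL \times_L \sL$ in $\sL \times \sL$. This is a standard string-topology lemma (compare \cite{Cohen}) but requires some care about the choice of tubular neighbourhood, since $\sL$ is infinite-dimensional; one handles this by noting that all the relevant embeddings are pulled back from the finite-dimensional embedding $L \hookrightarrow L \times L \hookrightarrow L \times L \times L$ along the evaluation maps, so the required compatibility reduces to a choice of tubular neighbourhoods on $L$, which is straightforward.
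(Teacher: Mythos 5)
Your proposal is correct and essentially reproduces (in fuller detail) the argument the paper sketches: the paper states the proposition without a formal proof, deferring to \cite{Cohen} and noting that associativity follows from homotopy coassociativity of $\Delta$, with the unit given by the fundamental class of constant loops. Your triple-fibre-product factorisation, reparametrisation homotopies, the observation that the tubular neighbourhood data is pulled back from the finite-dimensional diagonal in $L \times L \times L$, and the monoidality of $(\cdot) \wedge R$ are exactly the details implicitly invoked there.
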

            It follows from construction that
            \begin{LL}\label{Product commutes with monodromy}
                All of the maps $\mu$ constructed in this subsection commute with the map $\Psi^1$ up to homotopy.
            \end{LL}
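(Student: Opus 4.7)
The plan is a naturality argument. I would decompose each $\mu$ as a composition $c_* \circ \Delta_!$ of the Pontryagin-Thom collapse for the diagonal $\Delta \colon \sL_j \times_L \sL_k \hookrightarrow \sL_j \times \sL_k$ and the concatenation map $c \colon \sL_j \times_L \sL_k \to \sL_{j+k}$, and verify that each factor commutes with $\Psi^1$ up to homotopy. The key starting point is that $\Psi^1 \colon \sL \to \sL$ is given by postcomposition with the fibrewise diffeomorphism $\Psi^1 \colon L_{\psi^1} \to L_{\psi^1}$, whose restriction to the fibre $L = q^{-1}(0)$ is $\psi^1|_L$. The induced action on the Thom spectrum $\sL^{-TL}$ combines this action on the base with the bundle isomorphism $d\psi^1|_L$ of $TL$.

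First I would check that the concatenation $c$ strictly commutes with $\Psi^1$. This is immediate from the definitions: $\Psi^1$ acts pointwise on loops, $c$ is a pointwise operation in the loop parameter, and the basepoint condition $\gamma(0) = \delta(0) \in L$ defining the fibre product is preserved because $\Psi^1$ maps the fibre $L$ to itself.

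Next I would verify that $\Delta_!$ commutes with $\Psi^1$ up to homotopy. The inclusion $\Delta$ is strictly $(\Psi^1 \times \Psi^1)$-equivariant, and its normal bundle is $ev^* TL$, where $ev$ is evaluation at the basepoint. Under $\Psi^1$ the pullback bundle $ev^* TL$ is carried to itself via the differential $d\psi^1|_L$, which is exactly the bundle isomorphism used to define the action of $\Psi^1$ on $\sL^{-TL}$. Since the Pontryagin-Thom collapse of Definition \ref{PT without smoothness} depends on a choice of tubular neighbourhood only up to homotopy, and since $\Psi^1 \times \Psi^1$ sends any tubular neighbourhood of $\sL_j \times_L \sL_k$ to another one of the same submanifold, the two compositions are homotopic.

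Combining these two steps gives the required homotopy $\mu \circ (\Psi^1 \wedge \Psi^1) \simeq \Psi^1 \circ \mu$ for the ring product, and the same argument applies verbatim to the module product $\mu \colon \sL^{-TL} \wedge \Sigma^\infty_+ \sL \to \Sigma^\infty_+ \sL$. The main (and quite mild) obstacle is the bookkeeping: confirming that the identification of $(\psi^1)^* TL$ with $TL$ implicit in the action of $\Psi^1$ on $\sL^{-TL}$ is exactly the one appearing in the naturality of the Pontryagin-Thom construction. Unwinding the definitions this is tautological, so no essentially new input is needed beyond the naturality already established in Section \ref{Spectra Section}.
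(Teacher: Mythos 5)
Your proof is correct and is precisely the unwinding of the paper's ``it follows from construction'': the paper leaves this lemma without an explicit argument, and your decomposition of $\mu$ into $\Delta_\Shrek$ followed by $c$, together with strict equivariance of $c$ and $\Delta$ and the compatibility of $d\psi^1|_L$ with the Thom-spectrum action and the Pontryagin--Thom collapse, is exactly the intended justification.
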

            \begin{RR}
                For $R$-homology classes $x$ and $y$ in the $R$-homology of $\sL^{-TL}$ or $\Sigma^\infty_+\sL$, we will often write $x \cdot y$ for $\mu(x, y)$ when it is unambiguous to do so.
            \end{RR}
                
        \subsection{The moduli spaces $\sM$, $\sN$ and $\sQ$}\label{Other Moduli Subsection}
            We fix $G$ a convex domain in $\C$ with smooth boundary, such that $\partial G$ contains $(-\eta, \eta)$ and $(-\eta, \eta) + i$ for some $\eta > 0$, and $G$ is symmetric in the lines $i\R$ and $\frac{i}{2} + \R$. We define 
            $$D_+ = (\R_{\leq 0} + i[0, 1]) \cup G \sqcup \{-\infty\}$$ 
            and 
            $$D_- = (\R_{\geq 0} + i[0, 1]) \cup G \sqcup \{+\infty\}$$
            viewed as the one-point compactifications of half-infinite strips. Both are compact Riemann surfaces with boundary, biholomorphic to a disc. 
            We fix biholomorphisms $\phi_\pm: D^2 \rightarrow D_\pm$ which send $\mp 1$ to $\mp \infty$, satisfying
            $$\phi_-(z) = i - \phi_+(-z)$$
            for all $z$, as shown below.\par 
            \vspace{0.5cm}
            {\centering\begin{tikzpicture}[line cap=round,line join=round,>=triangle 45,x=1.0cm,y=1.0cm]
    
    \draw(1,1) circle (0.5cm);
    
    \draw (2.55,1) node[anchor=center] {$\longrightarrow$};
    \draw (2.55, 1.3) node[anchor=center] {$\cong$};
    \draw (2.55, 0.7) node[anchor=center] {$\phi_-$};
    
    \draw (1,0.2) node[anchor=center] {$D^2$};
    
    \draw (1.8,0.8) node[anchor=center] {$+1$};
    
    \draw [shift={(4,1)}] plot[domain=1.57:4.71,variable=\t]({0.5 * cos(\t r)},{0.5 * sin(\t r)});
    
    \draw (4, 1.5)-- (6, 1.5);
    \draw (4, 0.5)-- (6, 0.5);
    
    \draw (6.1,1) node[anchor=center] {$...$};
    \draw (6.8,0.8) node[anchor=center] {$+\infty$};
    
    \draw (5, 0.2) node[anchor=center] {$D_-$};

    \draw(1, -1) circle (0.5cm);
    
    \draw (2.55, -1) node[anchor=center] {$\longrightarrow$};
    \draw (2.55, -0.7) node[anchor=center] {$\cong$};
    \draw (2.55, -1.3) node[anchor=center] {$\phi_+$};
    
    \draw (1, -1.8) node[anchor=center] {$D^2$};
    
    \draw (0.1, -1.2) node[anchor=center] {$-1$};
    
    \draw [shift={(6.4, -1)}] plot[domain=4.71:7.85, variable=\t]({0.5 * cos(\t r)}, {0.5 * sin(\t r)});
    
    \draw (4.4, -0.5)-- (6.4, -0.5);
    \draw (4.4, -1.5)-- (6.4, -1.5);
    
    \draw (4.2, -1) node[anchor=center] {$...$};
    \draw (3.7, -1.2) node[anchor=center] {$-\infty$};
    
    \draw (5, -1.8) node[anchor=center] {$D_+$};

    \begin{scriptsize}
        \fill [color=black] (1.5,1) circle (1.5pt);
        \fill [color=black] (6.5,1) circle (1.5pt);
        
        \fill [color=black] (0.5, -1) circle (1.5pt);
        \fill [color=black] (3.6, -1) circle (1.5pt);
    \end{scriptsize}
\end{tikzpicture}\par }
            \begin{DD}
                We define $\sR_\pm$ to be the space of smooth maps $w: D_\pm \rightarrow M$ such that for all $z \neq \mp \infty$ in $\partial D_\pm$, $w(z)$ lies in $\psi^{\im z}(L)$.
            \end{DD}
            \begin{RR}
                Since $D_\pm$ are compact, the topological energy $\int w^* \omega$ of any $w$ in $\sR_\pm$ is finite.
            \end{RR}
            Fix $J$ an $\omega$-tame almost complex structure on $M$ which is convex at infinity.
            
            \begin{DD}
                We define $\sM$ to be the moduli space of $J$-holomorphic maps in $\sR_-$, and $\sN$ to be the moduli space of $J$-holomorphic maps in $\sR_+$.\par 
                There are natural evaluation maps $\pi: \sR_\pm \rightarrow L$ sending $u$ to $u(\mp\infty)$. We define $\sQ$ to be the fibre product $\sN \times_L \sM$ with respect to these evaluation maps.
            \end{DD} 
            \begin{RR}
                Morally $\sQ$ is the limit as $l \rightarrow \infty$ of $\sP$.
            \end{RR}
            For generic $J$ the moduli spaces $\sM$ and $\sN$ are naturally smooth manifolds, and a similar argument to Section \ref{Moduli Section} shows that they are compact. Furthermore, for generic $J$ the two maps $\pi: \sM \rightarrow L$ and $\pi: \sN \rightarrow L$ are transverse, and hence $\sQ$ is also a compact smooth manifold. We assume we have chosen our $J$ sufficiently generically to satisfy all of this.\par 
            $\sM$ and $\sN$ may have different connected components which have different dimensions. 
            However, note that since both of these moduli spaces are compact, they only have finitely many non-empty components.
        \subsection{Bundle pairs on the moduli spaces}

            We define $\Sigma$ to be the space
            $$\left(D_- \cup [-1, +1] \cup D_+\right) / \sim$$
            where $+\infty \sim -1$ and $+1 \sim -\infty$, as shown below (identifying $D_\pm$ with discs, using $\phi_\pm$).\par 
            \vspace{0.5cm}
            {\centering\begin{tikzpicture}[line cap=round,line join=round,>=triangle 45,x=1.0cm,y=1.0cm]
    \centering
    
    \draw (5,0.6) node[anchor=center]{\parbox{0 cm} {$\Sigma\mathchar\numexpr"6000+`:\relax$}};
    
    \draw(6.5,0) circle (0.5cm);
    \draw(9.5,0) circle (0.5cm);
    \draw (7,0)-- (9,0);
    \draw (6.5,-0.8) node[anchor=center] {$D_-$};
    \draw (9.5,-0.8) node[anchor=center] {$D_+$};
    \draw (7.4,-0.2) node[anchor=center] {$ +\infty $};
    \draw (8.6,-0.2) node[anchor=center] {$ - \infty $};
    \begin{scriptsize}
        \fill [color=black] (7,0) circle (1.5pt);
        \fill [color=black] (9,0) circle (1.5pt);
    \end{scriptsize}
\end{tikzpicture}\par }
            We define $\partial \Sigma \subseteq \Sigma$ to be the subspace 
            $$\left(\partial D_- \cup [-1, +1] \cup \partial D_+ \right) / \sim$$
            There are natural collapse maps $$\xi_\pm: (\Sigma, \partial \Sigma) \rightarrow (D_\pm, \partial D_\pm)$$
            which collapse $D_\mp$ and $[-1, 1]$ to $\mp \infty$, and are the identity on $D_\pm$.\par
            We define a submanifold $\tilde{L}_{\psi^1} \subseteq \C \times M$ to be the space of pairs $(z, x)$ such that $|z| = 1$ and if $z = 1$ then $x$ lies in $L$, otherwise $x$ lies in
            $$\psi^{\im \phi_-(z)} (L)$$
            This is diffeomorphic to $L_{\psi^1}$. We let $T^v \tilde{L}_{\psi^1}$ be its vertical tangent bundle.\par 
            There are natural maps $$ev_\pm: \left(D_\pm, \partial D_\pm\right) \times \sR_\pm \rightarrow \left(\C \times M, \tilde{L}_{\psi^1}\right)$$
            defined by
            $$ev_+(z, w) = \left(-\phi_+^{-1}(i + \bar{z}), w(z)\right)$$
            and
            $$ev_-(z, w) = \left(\phi_-^{-1}(z), w(z)\right)$$
            We define a map
            $$C: (\Sigma, \partial \Sigma) \times \sQ \rightarrow \left(\C \times M, \tilde{L}_{\psi^1}\right)$$
            by
            $$C(z, (v, u)) := \begin{cases}
                    \left(ev_-(z, u)\right) & \mathrm{if} \, z \in D_-\\
                    \left(ev_+(z, v)\right) & \mathrm{if} \, z \in D_+\\
                    \left(1, u(+\infty)\right) & \mathrm{if} \, z \in [-1, +1]
                \end{cases}
            $$
            We will consider the bundle pairs
            $$\xi_\pm^* ev_\pm^*\left(TM, T^v \tilde{L}_{\psi^1}\right),$$
            $$C^*\left(TM, T^v \tilde{L}_{\psi^1}\right)$$
            and 
            $$\xi_\pm^* \pi^*\left(TM, T^v\tilde{L}_{\psi^1}\right)$$
            From the construction, we see that the fibres of the complex parts of these bundle pairs are given as follows.
            $$\left(\xi^*_- ev_-^* TM\right)_{(z, (v, u))} = \begin{cases}
                T_{u(z)}M & \mathrm{if} \, z \in D_-\\
                T_{u(+\infty)}M & \mathrm{if} \, z \in D_+\\
                T_{u(+\infty)}M & \mathrm{if} \, z \in [-1, 1]
            \end{cases}$$
            $$\left(\xi_+^* ev_+^* TM\right)_{(z, (v, u))} = \begin{cases}
                T_{u(+\infty)} M & \mathrm{if} \, z \in D_-\\
                T_{v(z)} M & \mathrm{if} \, z \in D_+\\
                T_{u(+\infty)} M & \mathrm{if} \, z \in [-1, 1]
            \end{cases}$$
            $$\left(C^* TM\right)_{(z, (v, u))} = \begin{cases}
                T_{u(z)}M & \mathrm{if}\, z \in D_-\\
                T_{v(z)}M & \mathrm{if}\, z \in D_+\\
                T_{u(+\infty)}M & \mathrm{if}\, z \in [-1, 1]
            \end{cases}$$
            $$\left(\xi^*_\pm \pi^* TM\right)_{(z, (v, u))} = T_{u(+\infty)}M$$
            Similarly, the fibres of the real parts are given as follows, where $z$ now lies in $\partial \Sigma$.
            $$\left(\xi^*_- ev_-^* T^v\tilde{L}_{\psi^1}\right)_{(z, (v, u))} = \begin{cases}
                T_{u(z)}\psi^{\im z}(L) & \mathrm{if} \, z \in \partial D_-\\
                T_{u(+\infty)}L & \mathrm{if} \, z \in \partial D_+\\
                T_{u(+\infty)}L & \mathrm{if} \, z \in [-1, 1]
            \end{cases}$$
            $$\left(\xi_+^* ev_+^* T^v\tilde{L}_{\psi^1}\right)_{(z, (v, u))} = \begin{cases}
                T_{u(+\infty)} L & \mathrm{if} \, z \in \partial D_-\\
                T_{v(z)} \psi^{\im z}(L) & \mathrm{if} \, z \in \partial D_+\\
                T_{u(+\infty)} L & \mathrm{if} \, z \in [-1, 1]
            \end{cases}$$
            $$\left(C^* T^v\tilde{L}_{\psi^1}\right)_{(z, (v, u))} = \begin{cases}
                T_{u(z)}\psi^{\im z}(L) & \mathrm{if}\, z \in \partial D_-\\
                T_{v(z)}\psi^{\im z}(L) & \mathrm{if}\, z \in \partial D_+\\
                T_{u(+\infty)}L & \mathrm{if}\, z \in [-1, 1]
            \end{cases}$$
            $$\left(\xi^*_\pm \pi^* T^v\tilde{L}_{\psi^1}\right)_{(z, (v, u))} = T_{u(+\infty)}L$$
            \begin{LL}\label{F}
                There is an isomorphism of bundle pairs $F$ from
                $$\xi_-^* ev_-^*\left(TM, T^v \tilde{L}_{\psi^1}\right) \oplus \xi_+^* ev_+^*\left(TM, T^v \tilde{L}_{\psi^1}\right)$$
                to
                $$C^* \left(TM, T^v \tilde{L}_{\psi^1}\right) \oplus \xi_+^* \pi^* \left(TM, T^v \tilde{L}_{\psi^1}\right)$$
            \end{LL}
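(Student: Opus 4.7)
My plan is to construct $F$ piecewise over the three regions $D_-$, $[-1,+1]$, and $D_+$ making up $\Sigma$, reading off the required formulas directly from the explicit fiber descriptions given just before the statement. On $D_-$ the two bundle pairs have identical fibers (both equal to $T_{u(z)}M \oplus T_{u(+\infty)}M$, with real part $T_{u(z)}\psi^{\im z}(L) \oplus T_{u(+\infty)}L$ on $\partial D_-$), so I simply take $F$ to be the identity there. On $D_+$ the left-hand fiber is $T_{u(+\infty)}M \oplus T_{v(z)}M$ and the right-hand fiber is $T_{v(z)}M \oplus T_{u(+\infty)}M$; I take $F$ to be the $\C$-linear sign-twisted swap $(a, b) \mapsto (b, -a)$, which manifestly sends the real subspace $T_{u(+\infty)}L \oplus T_{v(z)}\psi^{\im z}(L)$ to $T_{v(z)}\psi^{\im z}(L) \oplus T_{u(+\infty)}L$ on $\partial D_+$ (using that $T_{u(+\infty)}L$ is closed under $-1$).

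Along the connecting strip $[-1, +1] \subset \partial \Sigma$, both bundle pairs reduce to the constant pair $(T_{u(+\infty)}M \oplus T_{u(+\infty)}M,\, T_{u(+\infty)}L \oplus T_{u(+\infty)}L)$, so the job is to interpolate between the two endpoint definitions. I would set $F|_{[-1,+1]}$ to be the rotation family $R_\theta(a, b) = (\cos\theta \cdot a + \sin\theta \cdot b,\, -\sin\theta \cdot a + \cos\theta \cdot b)$, parametrised so that $\theta = 0$ at $z = -1$ (giving $R_0 = Id$, matching the $D_-$ definition at $+\infty \sim -1$) and $\theta = \pi/2$ at $z = +1$ (giving $R_{\pi/2}(a, b) = (b, -a)$, matching the $D_+$ definition at $-\infty \sim +1$). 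Since $\cos\theta$ and $\sin\theta$ are real scalars, $R_\theta$ preserves the totally real subspace $T_{u(+\infty)}L \oplus T_{u(+\infty)}L$ throughout the interpolation, while $\C$-linearity and invertibility (each $R_\theta$ has determinant $1$) are automatic.

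By construction, $F$ is continuous (agreeing with each adjacent piece at the gluing points $\pm 1$) and is fiberwise a $\C$-linear isomorphism sending real parts to real parts on $\partial \Sigma$, i.e.\ an isomorphism of bundle pairs. I expect the only mild subtlety to be the sign in the swap over $D_+$: the untwisted swap $(a,b) \mapsto (b, a)$ has determinant $(-1)^n$ as a real operator on $T_{u(+\infty)}L \oplus T_{u(+\infty)}L$, so for odd $n = \dim L$ it lies in the opposite component of the general linear group from $Id$ and cannot be connected to $Id$ through $\R$-linear isomorphisms preserving the real structure; the sign-twisted version has determinant $+1$ and admits the explicit rotation interpolation above. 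Everything else is routine fiberwise bookkeeping using the formulas tabulated before the lemma.
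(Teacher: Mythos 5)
Your proof takes essentially the same approach as the paper's: identity on $D_-$, the $\C$-linear rotation-by-$\pi/2$ (i.e.\ $(a,b)\mapsto(b,-a)$) on $D_+$, and a real-scalar rotation family on $[-1,+1]$ interpolating between the two. The observation that the naive swap $(a,b)\mapsto(b,a)$ could fail to glue and that the sign-twist is what makes the interpolation possible is exactly the correct reason for the choice of matrix, and your endpoint bookkeeping (using $v(-\infty)=u(+\infty)$ to identify the fibres at the gluing points $\pm 1\sim \mp\infty$) is sound. One remark worth recording: the paper's stated interpolation $\begin{pmatrix}\cos\frac{(z+1)\pi}{2} & \sin\frac{(z+1)\pi}{2}\\ -\sin\frac{(z+1)\pi}{2} & \cos\frac{(z+1)\pi}{2}\end{pmatrix}$ for $z\in[-1,+1]$ reaches $-\mathrm{Id}$ rather than $\begin{pmatrix}0&1\\-1&0\end{pmatrix}$ at $z=+1$, so as written it does not match the $D_+$ piece; your parametrisation reaching $\theta=\pi/2$ at $z=+1$ is the one that glues continuously, and the paper's argument presumably intends $\frac{(z+1)\pi}{4}$ or equivalent.
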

            \begin{proof}
                We define $F_{(z, (v, u))}$ explicitly as follows. \par 
                If $z$ lies in $D_-$, then 
                $$F_{(z, (v, u))}: T_{u(z)}M \oplus T_{u(+\infty)}M \rightarrow T_{u(z)} M \oplus T_{u(+\infty)} M$$
                is given by the identity.\par 
                If $z$ lies in $D_+$, then
                $$F_{(z, (v, u))}: T_{u(+\infty)} M \oplus T_{v(z)} M \rightarrow T_{v(z)} M \oplus T_{u(+\infty)} M$$
                is given by the matrix
                $$\begin{pmatrix}
                    0 & 1\\
                    -1 & 0
                \end{pmatrix}$$
                with respect to the direct sum decomposition above.\par 
                If $z$ lies in $[-1, +1]$, then
                $$F_{(z, (v, u))}: T_{u(+\infty)}M \oplus T_{u(+\infty)}M \rightarrow T_{u(+\infty)}M \oplus T_{u(+\infty)}M$$
                is given by the matrix
                $$\begin{pmatrix}
                    \cos \left(\frac{z + 1}{4} \pi\right) & \sin \left( \frac{z + 1}{4} \pi \right) \\
                    - \sin \left(\frac{z + 1}{4} \pi\right) & \cos \left( \frac{z + 1}{4} \pi \right)
                \end{pmatrix}$$
                with respect to the decomposition above.\par 
                Note that this map $F$ respects the totally real subbundles when $z$ lies in $\partial D$. 
            \end{proof}
            Note that 
            $$\Ind \, \pi^* \left(TM, T^v \tilde{L}_{\psi^1}\right)$$
            is isomorphic to $\pi^* TL$, over both $\sR_\pm$.\par
            Now pick a ``pinch'' map $r: \left(D^2, \partial D^2\right) \rightarrow (\Sigma, \partial \Sigma)$ such that the compositions $\xi_\pm \circ r$ induce maps of degree 1 from $\partial D^2$ to $\partial D_\pm$. This description determines $r$ up to homotopy, but for convenience we will make a more specific choice of $r$ later. 
            \begin{CC}\label{rF}
                There is an isomorphism of bundle pairs $r^*F$ from
                $$r^*\xi_-^* ev_-^*\left(TM, T^v \tilde{L}_{\psi^1}\right) \oplus r^*\xi_+^* ev_+^*\left(TM, T^v \tilde{L}_{\psi^1}\right)$$
                to
                $$r^*C^* \left(TM, T^v \tilde{L}_{\psi^1}\right) \oplus r^*\xi_+^* \pi^* \left(TM, T^v \tilde{L}_{\psi^1}\right)$$
            \end{CC}

        \subsection{Gluing and orientations of the moduli spaces}
            The proof of gluing in \cite{EkholmIvan} shows that there is a diffeomorphism $\sQ \cong \sP_{l+1}$ for sufficiently large $l$, so $\sW$ can be viewed as a bordism from $L$ to $\sQ$. Here we take $\sP_{l+1}$ and $\sW$ as in Section \ref{Moduli Section}.
            \cite{EkholmIvan} uses in an important way the fact that there is no disc bubbling (which follows from relative exactness of $L$).\par  
            The vertical tangent bundle of this cobordism restricted to $\sQ$ is then naturally isomorphic to
            $$\Ind\, r^* C^* \left(TM, T^v \tilde{L}_{\psi^1}\right)$$
            \begin{RR}
                $\Ind\, r^* C^* \left(TM, T^v \tilde{L}_{\psi^1}\right)$ is isomorphic to the index bundle $\Ind$ constructed in Section \ref{Orienty Subsection}. For the purposes of brevity we will therefore refer to it as $\Ind$ for the rest of Section \ref{Looped Section}.
            \end{RR}
            Therefore an $R$-orientation of $\Ind$ induces an $R$-orientation of $T\sQ$.\par  
            By construction, $T\sM$ and $T\sN$ are naturally stably isomorphic to
            $$\Ind\,r^*\xi_\mp^*ev_\mp^* \left(TM, T^v \tilde{L}_{\psi^1}\right)$$
            respectively. Therefore $R$-orientations on both of these as well as $TL$ induce one on $T\sQ$, since $\sQ = \sN \times_L \sM$ implies that
            $$T\sQ = T\sN + T\sM - \pi^* TL$$
            \begin{BB}\label{Looped Orientation Assumption}
                There are $R$-orientations of $T\sN$, $T\sM$, $\Ind$ and $L$, such that the two induced $R$-orientations on $T\sQ$ above agree under the isomorphism $r^*F$ constructed in Lemma \ref{F}. Furthermore the restriction of the $R$-orientation of $\Ind$ to the space of constant maps $L$ is the given $R$-orientation of $L$.
            \end{BB}
            \begin{LL}\label{Gluing Lemma}
                Under Assumption \ref{Looped Orientation Assumption}, the cobordism $\sW$ admits an $R$-orientation, restricting to the given ones on $L$ and $\sQ$.
            \end{LL}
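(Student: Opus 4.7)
The plan is to reduce the $R$-orientability of $T\sV$ to that of the index bundle $\Ind$ already supplied by Assumption \ref{Looped Orientation Assumption}, via a stable isomorphism $T\sV \cong \Ind \oplus \mathbb{R}$ over $\sV$, where the trivial summand records the cobordism parameter $l \in [0,\infty]$.

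First I would construct this stable isomorphism, exactly as in Section \ref{Moduli Section}. The tautological disc-bundle over $\sV$ (with fibre $C_l$ over $\sU_l$) has contractible structure group, so admits a trivialisation; combined with the evaluation map, this produces a single bundle pair $(E, F)$ over $(D^2, \partial D^2) \times \sV$ whose index is $\Ind$. Transversality of the moduli problem together with the gluing theorem of \cite{EkholmIvan} (invoked already to build $\sV$) then identify $T\sV$ stably with $\Ind \oplus \mathbb{R}$.

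Next I would verify the two boundary restrictions. At $l = 0$ the moduli space consists of constant maps into $L$, so $(E, F)|_L$ pulls back under $x \mapsto x$ and $\Ind|_L$ is canonically stably isomorphic to $TL$. At $l = \infty$ the domain breaks into $D_{-} \cup [-1,+1] \cup D_{+}$, and the trivialisation can be chosen so that on $\sQ$ the bundle pair agrees with $r^* C^*(TM, T^v \tilde{L}_{\psi^1})$; hence $\Ind|_{\sQ}$ is stably $\Ind\, r^* C^*(TM, T^v \tilde{L}_{\psi^1})$, matching the stable model used in the statement of Assumption \ref{Looped Orientation Assumption}.

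Finally, the assumption supplies an $R$-orientation of $\Ind$ over $\sV$ whose restriction to $L$ is the given orientation of $TL$, and whose restriction to $\sQ$ agrees, via the isomorphism $r^*F$ of Lemma \ref{rF}, with the orientation on $T\sQ \cong T\sN + T\sM - \pi^* TL$ induced by the assumed orientations of $T\sN$, $T\sM$ and $TL$. Tensoring with the canonical orientation of the $\mathbb{R}$-factor produces an $R$-orientation of $T\sV$ restricting correctly to both boundary pieces. The main technical obstacle is choosing the trivialisation of the domain family over $\sV$ compatibly with the pinch map $r$ at $l = \infty$ and with the constant-map evaluation at $l = 0$, so that the identifications of $\Ind|_L$ with $TL$ and of $\Ind|_{\sQ}$ with the decomposition in Lemma \ref{rF} really are the ones implicit in Assumption \ref{Looped Orientation Assumption}; this is precisely the compatibility built into the gluing picture.
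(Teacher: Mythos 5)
Your reduction of the problem is the right one, and your setup matches the paper's: both identify $T\sV$ stably with $\Ind \oplus \R$, note that the assumption supplies an $R$-orientation of $\Ind$ restricting correctly to $TL$ at $l = 0$, and reduce the matter to checking that the identification $\Ind|_{\sQ} \cong T\sQ$ coming from the broken boundary of the cobordism is the same (up to homotopy of bundle isomorphisms) as the one induced by $r^*F$, since Assumption \ref{Looped Orientation Assumption} only guarantees agreement of orientations \emph{through} $r^*F$.

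However, you then dismiss this last step by saying it is ``precisely the compatibility built into the gluing picture.'' That is the whole content of the lemma, and it is not automatic. The identification of $T\sQ$ with $\Ind\, r^*C^*(TM, T^v\tilde{L}_{\psi^1})$ coming from gluing is the derivative $d(\rho\circ PG)$ of the Ekholm--Smith regluing diffeomorphism; the identification used in the assumption is the purely algebraic isomorphism $r^*F$ of Lemma \ref{rF}, which mixes the two summands via a rotation over $[-1,1]$. The paper bridges these by introducing an intermediate map $A$: feed a kernel element $(s_-,s_+) \in T\sQ$ into $r^*F$, project to the $C^*$-factor, and orthogonally project onto the kernel over $\sP_{l+1}$ using a metric weighted to be small on the neck $H$. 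One then argues separately that (i) $A$ agrees up to homotopy with the canonical identification of Lemma \ref{Index Bundles Are Canonical}, because the relevant Cauchy--Riemann operators can be connected by a short path once the neck is weighted small, and (ii) $A$ is $C^1$-close to $d(\rho\circ PG)$, hence homotopic to it as a bundle isomorphism. Both steps require the specific choices (pinch map $r$ restricting to the identity on $D_{\pm,l}$, weighting on $H$, large $l$) that you flag as ``the main technical obstacle'' but do not resolve. Without this comparison your proof does not establish that the restricted orientation on $T\sQ$ is the one in the hypothesis, so the argument is incomplete precisely where the lemma requires work.
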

            \begin{proof}
                The cobordism $\sW$ has tangent bundle stably isomorphic to $\Ind \oplus \R$, and therefore the $R$-orientation on $\Ind$ induces an $R$-orientation on $\sW$. By the last part of Assumption \ref{Looped Orientation Assumption}, this $R$-orientation restricts on the space of constant maps $L$ to the given $R$-orientation of $L$.\par 
                $\sQ$ now admits two $R$-orientations: one coming from the $R$-orientation on $\sW$, which has $\sQ$ as a boundary component (by identifying $\sP_{l+1}$ with $\sQ$ via the Ekholm-Smith diffeomorphism), and the one coming from the $R$-orientations on $T\sM$, $T\sN$ and $TL$ under the identification 
                $$T\sQ = T\sN + T\sM - \pi^* TL$$
                It will therefore suffice to check that the natural bundle isomorphism which comes from gluing
                $$\Ind \cong T\sQ \cong T\sN + T\sM - \pi^* TL $$
                from above agrees with the one induced by the isomorphism $F$ from Lemma \ref{F}, up to homotopy of bundle isomorphisms. Assumption \ref{Looped Orientation Assumption} then tells us that the $R$-orientations on both sides agree. To carry this out we need to open up both of these isomorphisms.\par 
                We will recall a sketch of how Ekholm and Smith, in \cite{EkholmIvan}, construct a diffeomorphism from $\sQ$ to $\sP_{l + 1}$ (where $\sP_{l + 1}$ is defined as in Section \ref{Moduli Section}). Fix some very large $l > 0$. We define $D_{\pm, l}$ to be the subset of $D_\pm$ given by
                $$D_\pm \cap \left\{\pm \re \geq -(l - 1)\right\}$$
                which we view as subsets of $G_{l} = C_{l + 1}$ ($G_l$, $C_{l + 1}$ defined as in Section \ref{Moduli Section}) by translating by $\pm l$. We let $H$ be the closure of the complement in $G_l$ of the union of these two regions:
                $$H := [-1, 1] + i[0, 1]$$
                Ekholm and Smith construct a pre-gluing embedding 
                $$PG: \sQ \hookrightarrow \sD_{f_{l + 1}, C_{l + 1}}$$
                sending $(v, u)$ to a map $(u \#_c v): C_{l + 1} \rightarrow M$ which agrees with $u$ and $v$ on $D_{\pm, l}$ respectively, and is $C^1$-small when restricted to $H$. Roughly, this is constructed by picking a metric on $M$, picking $l$ large enough so that outside $D_{\pm, l}$ the images of both $u$ and $v$ lie inside a single geodesically convex neighbourhood of $M$, and cutting them off with a bump function.\par 
                We view $T\sQ$ as lying inside 
                $$\Gamma\left(r^* \xi_+^* ev_+^*\left(TM, T^v\tilde{L}_{\psi^1}\right) \oplus r^* \xi_-^* ev_-^*\left(TM, T^v\tilde{L}_{\psi^1})\right)\right) $$
                and $T\sP_{l + 1}$ as lying inside 
                $$\Gamma\left(r^* C^*\left(TM, T^v\tilde{L}_{\psi^1}\right)\right)$$
                Note that we can do this without introducing stabilisations because our moduli spaces are cut out transversely, and so our choices of almost complex structure induce surjective Cauchy-Riemann operators.\par 
                Using a Newton-Picard iteration, Ekholm and Smith show that there is a diffeomorphism $\rho: PG(\sQ) \rightarrow \sP_{l + 1}$, such that $PG$ and $\rho \circ PG$ are $C^1$- close.
                Therefore the derivative $d (\rho \circ PG)$ sends a pair of sections $(s_+, s_-)$ lying in $T\sQ$, to a section $s$ lying in $T\sP_{l + 1}$, such that $s$ is $C^0$-small on $H$ and $C^0$-close to $u$ and $v$ when restricted to $D_{\pm, l}$.\par 
                We make some choice of ``pinch'' map $r: C_{l + 1} \rightarrow \Sigma$ which sends $\partial C_{l + 1}$ to $\partial \Sigma$, and which restricts to the identity on $D_{\pm, l}$. Note that $ev \simeq C \circ r$ on $C_{l+1}$. \par 
                By construction, we see that the map induced by $r^*F$ on sections of these bundle pairs sends a pair of sections $(s_\pm)$, whose evaluations at $\mp \infty$ agree, to a section $s$, whose restrictions to $D_{\pm, l}$ agree with $s_\pm$ respectively. Consider the composition $A:$
                $$\begin{tikzcd}
                    T\sQ \arrow[d, hookrightarrow]\\
                    \Gamma\left(r^*\xi_-^*ev_-^*\left(TM, T^v \tilde{L}_{\psi^1}\right) \oplus r^* \xi_+^* ev_+^*\left(TM, T^v \tilde{L}_{\psi^1}\right)\right) \arrow[d, "r^*F"]\\
                    \Gamma\left(r^*C^*\left(TM, T^v \tilde{L}_{\psi^1}\right) \oplus r^* \xi_+^* \pi^*\left(TM, T^v \tilde{L}_{\psi^1}\right)\right) \arrow[d, "\mathrm{Projection\, to\, the\, first\, factor}"]\\
                    \Gamma\left(r^*C^*\left(TM, T^v \tilde{L}_{\psi^1}\right)\right) \arrow[d, "\mathrm{Orthogonal\, projection}"]\\
                    T\sP_{l + 1}\\
                \end{tikzcd}$$
                where we weight the metric on the bundle pair to be small on $H$. Note that since $r^*F$ respects the direct sum decomposition away from $H$, if the weight is sufficiently small on $H$, any Cauchy-Riemann operator on the bundle pair in either the second or third term which respects the direct sum decomposition is of distance less than 1 to a Cauchy-Riemann operator on the other which respects the direct sum. Therefore the map $T\sQ \rightarrow T\sP_{l + 1}$ coming from Lemma \ref{Index Bundles Are Canonical} agrees up to homotopy with $A$, and so under Assumption \ref{Looped Orientation Assumption}, $A$ respects the $R$- orientations. \par 
                $A$ is $C^0$-close to $d(\rho \circ PG)$ and so the two maps are homotopic isomorphisms of vector bundles, which is what we wanted.
            \end{proof}
            \begin{LL}\label{Corientations with Looooooops}
                If $T^v \tilde{L}_{\psi^1}$ admits a stable trivialisation over an $(i + 1)$-skeleton of $\tilde{L}_{\psi^1}$, then the induced stable trivialisations over an $i$-skeleton $\sQ_i$ of $\sQ$ of the bundle pairs appearing in Corollary \ref{rF} agree up to homotopy, under $r^*F$.
            \end{LL}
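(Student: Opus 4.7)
The plan is to unpack the induced trivialisations explicitly and verify compatibility with $r^*F$, exploiting that $F$ is a fibrewise (not merely stable) isomorphism of bundle pairs by Lemma \ref{F}.

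First, the real part of each bundle pair in Lemma \ref{rF} is pulled back from $T^v \tilde L_{\psi^1} \cong T^v L_{\psi^1}$ via an appropriate composition of $r$, $\xi_\pm$, and one of $ev_\pm$, $C$, or $\pi$. A stable trivialisation of $T^v L_{\psi^1}$ over an $(i+1)$-skeleton $K$ therefore pulls back to stable trivialisations of the real parts of all four bundle pairs, over an $(i+1)$-skeleton $K' \subseteq \sQ$. To extend these to stable trivialisations of the bundle pairs themselves, one checks that each has Maslov index zero: the boundary loop of Lagrangians arises from the Hamiltonian isotopy $\psi^t$ and is null-homotopic through the two-parameter family $\psi^{st}$, $s \in [0,1]$. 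Given Maslov index zero and a stable trivialisation of the real part, the complex part is stably trivial (since $D^2$ is contractible and the complex part restricted to $\partial D^2 \times K'$ is the complexification of the real part), yielding a stable trivialisation of the bundle pair, unique up to homotopy.

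To show $r^*F$ intertwines the induced trivialisations, one exploits that $F$ is a genuine (not merely stable) isomorphism of bundle pairs defined over all of $\Sigma$. Under the trivialisations of both sides, $r^*F$ becomes a map $K' \to \mathrm{Aut}(\C^N, \R^N)$, adjoint to a class in $[K', \Omega(U/O)]$. The fact that $F$ extends over the interior of $\Sigma$ as a bundle pair isomorphism (not just over $\partial \Sigma$) provides a nullhomotopy for this class, so it vanishes and the two induced trivialisations agree up to homotopy.

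The main obstacle will be verifying Maslov index zero for each of the four bundle pairs --- particularly for $r^*C^*(TM, T^v \tilde L_{\psi^1})$, whose boundary loop of Lagrangians traverses the Hamiltonian family twice --- and ensuring that the nullhomotopy extracted from the extension of $F$ over $\Sigma$ is compatible with the explicit pinch map $r$ chosen. This amounts to bookkeeping with the pointwise formulas for $F$ from Lemma \ref{F} and a careful track of the contracting homotopies of the boundary loops of Lagrangians.
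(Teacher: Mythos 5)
The first paragraph of your plan is sound and matches the set-up: the real parts are pulled back from $T^v L_{\psi^1}$, so a stable trivialisation of the latter over an $(i+1)$-skeleton induces stable trivialisations of the real parts. From there, however, you diverge from what is actually required, and your key step does not hold.

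Your second paragraph tries to upgrade the trivialisations of the real parts to trivialisations of the whole bundle pairs. This is unnecessary: what Proposition \ref{Complex Orientability} consumes is a stable trivialisation of the real part $F$ alone, and the paper's proof compares only those. Moreover, your claim that the resulting bundle-pair trivialisation is ``unique up to homotopy'' is not justified --- extensions of a given trivialisation from $\partial D^2 \times K'$ to $D^2 \times K'$ in the unitary group form a torsor over $[\Sigma^2 K'_+, U] \cong KU^0(K')$, which is generally nonzero --- so this step introduces exactly the kind of ambiguity you need to rule out.

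The real gap is in your third paragraph. You assert that ``$F$ extends over the interior of $\Sigma$ as a bundle pair isomorphism (not just over $\partial \Sigma$) provides a nullhomotopy''. But \emph{every} bundle pair isomorphism extends over the interior; that is part of the definition. If this were sufficient, it would prove that any bundle pair isomorphism intertwines any choice of trivialisations of the real parts, which is false. Concretely, the discrepancy between the two trivialisations of the real parts is a map $\eta: \partial D^2 \times \sQ \to O$, and an extension of $r^*F$ over $D^2$ provides a nullhomotopy of $\eta$ \emph{through} $U$, not through $O$. Since $\pi_1 O = \Z/2$ while $\pi_1 U = 0$, a $U$-nullhomotopy gives no control on the class of $\eta$ in $O$. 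The extension over the 2-cell of $\Sigma$ simply is not the relevant input.

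What the paper actually uses is the explicit form of $F$ from Lemma \ref{F}: $F$ is the identity over $D_-$, a fixed block rotation matrix over $D_+$, and a rotation interpolating between them over the segment $[-1,+1]$, independently of the point of $\sQ$. Because the pinch map $r$ sends both the top and the bottom arcs of $\partial D^2$ to $[-1,+1]$, the resulting loop $\eta(\cdot, y): \partial D^2 \to O$ travels out along the rotation and then retraces it backwards, hence is canonically contractible, and the contraction is independent of $y \in \sQ$. This ``retracing'' phenomenon together with $y$-independence is the substance of the lemma, and your proposal does not reach it. You gesture at this in your closing sentence (``bookkeeping with the pointwise formulas for $F$''), but that is not bookkeeping on top of a correct mechanism --- it \emph{is} the mechanism, and the mechanism you propose (extension over the interior) does not work.
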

            \begin{proof}
                Homotoping our maps if necessary, we can assume that our maps 
                $$\left(D^2, \partial D^2\right) \times \sQ \rightarrow \left(\C \times M, \tilde{L}_{\psi^1}\right)$$
                send $\partial D^2 \times \sQ_i$ to an $(i+1)$-skeleton of $\tilde{L}_{\psi^1}$.\par 
                Then the two stable trivialisations are related by a map $\eta: \partial D^2 \times \sQ_i \rightarrow O$, where $O$ is the infinite orthogonal group. We will use the choice of $r$ from the proof of Lemma \ref{Gluing Lemma} for convenience, noting that this is unique up to homotopy.\par 
                From the construction of $F$, for any $y$ in $\sQ$ and $x$ in $r|_{\partial D^2}^{-1} \partial D_-$, $\eta(x, y)$ is given by the identity matrix. For $x$ in $r|_{\partial D^2}^{-1} \partial D_+$, $\eta(x, y)$ is given by (the stabilisation of) the block matrix
                $$\begin{pmatrix}
                    0 & 1\\
                    -1 & 0
                \end{pmatrix}$$
                Then along the top part of $\partial D^2$, $\eta(\cdot, y)$ is given by some homotopy between these two matrices, and travelling in the opposite direction along the bottom produces the reverse of this homotopy. Therefore for a fixed $y$, $\eta(\cdot, y)$ is a contractible loop. However these loops are independent of $y$, so the contractions can be chosen to be independent of $y$ too, and so $\eta$ is nullhomotopic.
            \end{proof}
            Then from Proposition \ref{Complex Orientability} along with the fact that $\pi_3 BO = 0$, it follows that
            \begin{CC}\label{vc}
                \begin{enumerate}
                    \item If $T^v L_{\psi^1}$ admits a stable trivialisation over a 2-skeleton of $L_{\psi^1}$, then Assumption \ref{Looped Orientation Assumption} holds for $R = H\Z$.
                    \item If $T^v L_{\psi^1}$ admits a stable trivialisation over a 2-skeleton of $L_{\psi^1}$, then Assumption \ref{Looped Orientation Assumption} holds for $R = KU$. 
                    \item If $T^v L_{\psi^1}$ admits a stable trivialisation, then Assumption \ref{Looped Orientation Assumption} holds when $R$ is any complex-oriented cohomology theory.
                \end{enumerate}
            \end{CC}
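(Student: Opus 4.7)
The plan is to deduce each clause directly from Proposition \ref{Complex Orientability} combined with the preceding lemma on compatibility under $r^*F$. First I would note that the real parts of the bundle pairs $\xi_\pm^* ev_\pm^* (TM, T^v \tilde{L}_{\psi^1})$ and $C^* (TM, T^v \tilde{L}_{\psi^1})$ appearing in Assumption \ref{Looped Orientation Assumption} are, by the explicit fibre descriptions in Section \ref{Other Moduli Subsection}, all pulled back from $T^v L_{\psi^1} \to L_{\psi^1}$ via evaluation maps $\partial \Sigma \times \sQ \to L_{\psi^1}$ (and similarly over $\sM$, $\sN$, $L$). After cellular approximation of these evaluation maps, a stable trivialisation of $T^v L_{\psi^1}$ over an $(i+1)$-skeleton of $L_{\psi^1}$ pulls back to a stable trivialisation of each of these real parts over an $i$-skeleton of the relevant moduli space.

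Next I would apply Proposition \ref{Complex Orientability} to each bundle pair: a stable trivialisation of the totally real part over an $i$-skeleton produces a stable complex structure on the associated index bundle over the same skeleton, and by construction these stable complex structures are compatible under direct sums and pullbacks. Since $T\sM$, $T\sN$ and $T\sQ$ are stably isomorphic to the index bundles of $r^*\xi_-^* ev_-^*(TM, T^v \tilde{L}_{\psi^1})$, $r^*\xi_+^* ev_+^*(TM, T^v \tilde{L}_{\psi^1})$ and $r^*C^*(TM, T^v \tilde{L}_{\psi^1})$ respectively, they inherit stable complex structures over the corresponding skeleta, and likewise for $\Ind$ and $TL$.

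Now I would translate skeletal stable complex structures into $R$-orientations case by case. For (1), an $H\Z$-orientation is just an ordinary orientation, so a stable complex structure over a $1$-skeleton suffices; this requires $T^v L_{\psi^1}$ stably trivial over a $2$-skeleton. For (2), $KU$-orientability follows from a stable complex structure over a $2$-skeleton by \cite[Theorem 12.3]{Atiyah-Bott-Shapiro}, requiring $T^v L_{\psi^1}$ stably trivial over a $3$-skeleton. For (3), a complex-oriented theory is oriented with respect to any stable complex structure, so a full stable trivialisation of $T^v L_{\psi^1}$ is enough.

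The only nontrivial thing left to verify is the compatibility clause in Assumption \ref{Looped Orientation Assumption}: that the two induced $R$-orientations on $T\sQ$ (one coming from $\Ind$ over $\sQ$, the other from $T\sN + T\sM - \pi^*TL$ together with $r^*F$) agree. This is precisely the content of the lemma immediately preceding the corollary, which shows that under $r^*F$ the two skeletal stable trivialisations of the bundle pairs are homotopic. Combining this with the naturality of the construction in Proposition \ref{Complex Orientability} under direct sums gives the matching of orientations, completing the deduction. The only subtle point is matching skeletal dimensions precisely, but these are dictated by the connectivity of the classifying spaces $BO$, $BU/BO$ and the complex-oriented analogue, so there is no genuine obstacle.
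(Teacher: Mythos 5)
Your proposal is correct and takes essentially the same approach the paper intends: the paper gives no explicit proof but says the corollary follows from Proposition \ref{Complex Orientability} together with the immediately preceding lemma, and you have correctly filled in the chain — pull back the skeletal trivialisation of $T^v L_{\psi^1}$ to the real parts of the bundle pairs, apply Proposition \ref{Complex Orientability} to get skeletal stable complex structures on $T\sM$, $T\sN$, $T\sQ$, $\Ind$ and $TL$, translate these into $R$-orientations case by case, and use the preceding lemma (plus naturality of the construction under direct sums and pullbacks) to verify the compatibility clause of Assumption \ref{Looped Orientation Assumption}. The only thing you leave implicit is the final clause (that the restriction of $\Ind$ to $L$ matches the chosen orientation of $L$), which also follows from the naturality of the construction.
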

            \begin{proof}
                Assume $T^v\tilde{L}_{\psi^1}$ admits a stable trivialisation over an $(i+1)$-skeleton. Note that since $\pi_3 BO = 0$, if this holds for $i=2$, it holds for $i=3$ too.\par 
                We use that $T\sQ$ is naturally stably isomorphic to $\Ind$, and $T\sM$ and $T\sN$ are naturally stably isomorphic to
                $$\Ind\,r^*\xi^*_\mp ev^*_\mp\left(TM, T^v \tilde{L}_{\psi^1}\right)$$
                respectively. Since these all are index bundles of bundle pairs pulled back via some map to $(M \times \C, \tilde{L}_{\psi^1})$, these maps can be homotoped to send an $i$-skeleton of the moduli space times $\partial D^2$ to an $(i+1)$-skeleton of $\tilde{L}_{\psi^1}$. Therefore this stable trivialisation induces a stable complex structure on each an $i$-skeleton of these moduli spaces, by Proposition \ref{Complex Orientability}. We then use the fact that a complex structure over a 1-skeleton induces an orientation and a complex structure over a 2-skeleton induces a $KU$-orientation.\par 
                We then apply Proposition \ref{Corientations with Looooooops} to see that these induced stable complex structures agree under the isomorphism $r^*F$.\par 
                Finally, use the fact that a stable complex structure over a 1-skeleton induces an orientation, and one over a 2-skeleton induces a $KU$-orientation.
            \end{proof}
            \begin{proof}[Proof of Proposition \ref{MN Orientations}]
                We will show that if $TL$ admits a homotopy class of stable trivialisations over an $i$-skeleton $L_i$ of $L$ which is preserved by $\psi^1$ and can be extended to an $(i+1)$-skeleton $L_{i + 1}$, then $T^v L_{\psi^1}$ admits a stable trivialisation over an $i$-skeleton of $L_{\psi^1}$. Then Corollary \ref{vc} will imply the result.\par 
                Let $\tilde{\psi^1}:L \rightarrow L$ be a map homotopic to $\psi^1|_L$ which sends $L_i$ to itself. Then $L_{\psi^1}$ is homotopy equivalent to 
                $$L \times [0, 1] / (0, x) \sim (1, \tilde{\psi^1}(x))$$
                and an $(i+1)$-skeleton of this is given by $$\left(L_i \times [0, 1]\right) \cup L_{i+1}$$
                Then by assumption, $T^v L_{\psi^1}$ admits a stable trivialisation over this $(i+1)$-skeleton.
            \end{proof}
        \subsection{Composition of the moduli spaces}\label{sec:5.6}
            We now assume Assumption \ref{Looped Orientation Assumption} holds throughout the rest of Section \ref{Looped Section}.\par 
            Fix some diffeomorphism $\tilde{L}_{\psi^1} \cong L_{\psi^1}$ covering some orientation-reversing diffeomorphism $\partial D^2 \cong S^1$ sending $1 \in \partial D^2$ to $0 \in S^1 = \R/\Z$. There are natural evaluation maps 
            $$S^1 \times \sR_\pm \xrightarrow{\exp(-2\pi i \cdot)} \partial D^2 \times \sR_\pm \xrightarrow{\phi_\pm(\mp \cdot)} \partial D_\pm \times \sR_\pm \xrightarrow{ev_\pm} \tilde{L}_{\psi^1} \xrightarrow{\cong} L_{\psi^1}$$
            whose adjoints define maps
            $$\sigma_\pm: \sR_\pm \rightarrow \sL_{\mp 1}$$
            Therefore $\sQ$ admits a natural evaluation map 
            $$\sigma: \sQ \rightarrow \sL_0$$
            sending $(v, u)$ to the concatenation $c(\sigma_-(v), \sigma_+(u))$. Then choices of $R$-orientations of $L$ and all the moduli spaces allow us to use $\sigma_\pm$ and $\sigma$ to define fundamental classes $[\sM]$, $[\sN]$ and $[\sQ]$ in $\bigoplus\limits_j R_j\left(\sL_{\pm 1}^{-TL}\right)$ and $\bigoplus\limits_j R_j\left(\sL_0^{-TL}\right)$ respectively, as in Section \ref{Fundamental Classes Subsection}.\par 
            
            Our goal in this subsection is to prove the following two lemmas.
            \begin{LL}\label{Product Is Intersection}
                $$[\sN] \cdot [\sM] = \left[\sQ\right]$$
                in $\bigoplus\limits_{j} R_j\left(\sL_0^{-TL}\right)$.
            \end{LL}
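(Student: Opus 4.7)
The plan is to unwind the definition of the Chas--Sullivan-style product and exhibit $[\sN] \cdot [\sM]$ as the fundamental class of $\sQ$ with respect to $\sigma$, which is $[\sQ]$ by definition. The key input is naturality of the Pontryagin--Thom collapse under pullback squares.

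First I would observe that the external smash product $[\sN] \wedge [\sM]$ is itself the fundamental class of the product manifold $\sN \times \sM$, taken with respect to the map $\sigma_+ \times \sigma_-: \sN \times \sM \to \sL_1 \times \sL_{-1}$ and the external sum $TL \boxplus TL$. This is a standard multiplicativity property: the Thom isomorphism interacts well with smash products, and the fundamental class of a product of closed manifolds is the smash of the two fundamental classes.

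The heart of the argument is the pullback square
\[
\begin{tikzcd}
\sQ \arrow[r, hook] \arrow[d] & \sN \times \sM \arrow[d, "\sigma_+ \times \sigma_-"] \\
\sL_1 \times_L \sL_{-1} \arrow[r, hook, "\Delta"] & \sL_1 \times \sL_{-1}
\end{tikzcd}
\]
in which both horizontal embeddings have normal bundle (the appropriate pullback of) $TL$: the top one is a transverse smooth embedding by the genericity of $J$ used when defining $\sQ$ in Section \ref{Other Moduli Subsection}, and the bottom one is the embedding appearing in the construction of $\mu$, with normal bundle as in Definition \ref{PT without smoothness}. Because the square is a pullback and the normal bundles match, the Pontryagin--Thom collapse maps fit into a commutative square after smashing with $R$. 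Applying this together with the Thom isomorphism to $[\sN] \wedge [\sM]$ identifies $\Delta_!([\sN] \wedge [\sM])$ with the fundamental class of $\sQ$ in $(\sL_1 \times_L \sL_{-1})^{-TL}$ with respect to the induced evaluation $\sQ \to \sL_1 \times_L \sL_{-1}$. Finally, naturality of fundamental classes under $c$ yields the fundamental class of $\sQ$ with respect to $\sigma = c \circ (\sigma_+ \times \sigma_-): \sQ \to \sL_0$, which is $[\sQ]$.

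The main obstacle is the orientation bookkeeping: one must verify that under the stable isomorphism $T\sQ - \pi^*TL \cong (T\sN - \pi^*TL) \oplus (T\sM - \pi^*TL)$ coming from the fibre product structure, the orientation of the left-hand side used to define $[\sQ]$ agrees with the product of the orientations of the summands used to define $[\sN]$ and $[\sM]$. This is precisely the compatibility asserted in Assumption \ref{Looped Orientation Assumption} and promoted to the cobordism $\sV$ by Lemma \ref{Gluing Lemma}. Once this orientation match is in hand, naturality of the Pontryagin--Thom collapse carries the oriented fundamental classes through consistently, and the equality $[\sN] \cdot [\sM] = [\sQ]$ follows.
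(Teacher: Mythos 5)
Your proposal is essentially the paper's proof spelled out in words: the paper compresses exactly this argument — the external fundamental class $[\sN] \wedge [\sM]$, naturality of the Pontryagin--Thom collapse for $\sQ \hookrightarrow \sN \times \sM$ over the embedding $\Delta$, compatibility with the Thom isomorphisms, and pushing forward along $c$ — into a single commuting diagram in which $\mu$ is unwound into $i_!$, $i'_!$ and the evaluation maps. One small correction on attribution: Lemma \ref{Gluing Lemma} orients the cobordism $\sV$ and is used for Lemma \ref{Gluing}, not here; for the present lemma the relevant orientation input is just the part of Assumption \ref{Looped Orientation Assumption} that identifies the chosen orientation of $T\sQ$ with the one induced by $T\sN + T\sM - \pi^*TL$ via the fibre-product structure.
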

            \begin{LL}\label{Gluing}
                $$\left[\sQ\right] = [L]$$ 
                in $\bigoplus\limits_{j} R_j\left(\sL_0^{-TL}\right)$.
            \end{LL}
            From these, we deduce 
            \begin{LL} \label{Composition Is A Unit}
                The composition\\
                \begin{tikzcd}
                    \Sigma^\infty_+\sL_0 \wedge R \arrow[r, "{[\sM]\cdot}"] &
                    \bigvee\limits_j \Sigma^{\infty + j}_+ \sL_1 \wedge R \arrow[r, "{[\sN]\cdot}"] &
                    \bigvee\limits_j \Sigma^{\infty + j}_+ \sL_0 \wedge R \arrow[r, "{p}"] &
                    \Sigma^\infty_+ \sL_0 \wedge R
                \end{tikzcd}\\
                is an equivalence, where $p: \bigvee\limits_j \Sigma^{\infty + j}_+ \sL_0 \rightarrow \Sigma^\infty_+ \sL_0$ is the natural projection map.
            \end{LL}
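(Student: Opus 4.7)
The plan is to chain Lemmas~\ref{Product Is Intersection} and~\ref{Gluing} together with the unit axiom of the Chas--Sullivan module action. First, by associativity of the module structure on $\Sigma^\infty_+ \sL \wedge R$ over $\sL^{-TL} \wedge R$ established in Section~4.2, the composition $([\sN] \cdot) \circ ([\sM] \cdot)$ is homotopic, as a map of $R$-modules, to the single map obtained by acting by the ring product $[\sN] \cdot [\sM] \in \bigoplus_j R_j(\sL_0^{-TL})$. The intermediate wedge $\bigvee_j \Sigma^{\infty + j}_+ \sL_1 \wedge R$ records the decomposition $[\sM] = \sum_i [\sM^i]$ into the fundamental classes of the connected components of $\sM$ (which have various dimensions), and similarly for $[\sN]$.

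Applying Lemma~\ref{Product Is Intersection}, the ring product $[\sN] \cdot [\sM]$ equals $[\sQ]$, so the composition becomes $p \circ ([\sQ] \cdot)$. By Lemma~\ref{Gluing}, $[\sQ] = [L]$, where $[L]$ is the image of the fundamental class $\Sphere \to L^{-TL}$ under $i\colon L \hookrightarrow \sL_0$; by the construction of the unit in Section~4.2, this is exactly the unit of the ring spectrum $\sL^{-TL}$, concentrated in the degree-$0$ summand of the wedge. The unit axiom then gives that $[L] \cdot$ is homotopic, as an $R$-module map, to the identity on $\Sigma^\infty_+ \sL_0 \wedge R$; since $p$ is the identity on the degree-$0$ summand, the entire composition is homotopic to the identity, and is in particular an equivalence.

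The only non-routine point is bookkeeping the gradings and wedge decompositions. Since the Chas--Sullivan product of classes of degrees $a$ and $b$ has degree $a + b$, the re-indexed decomposition of $[\sN] \cdot [\sM]$ as a map into $\bigvee_j \Sigma^{\infty + j}_+ \sL_0 \wedge R$ matches the component decomposition of $[\sQ]$, which is what makes the chain of identifications above consistent. Once this indexing is set up carefully, each step in the plan is formal.
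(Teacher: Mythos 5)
Your proof is correct and follows the same approach the paper intends: the paper simply states ``From these [Lemmas \ref{Product Is Intersection} and \ref{Gluing}], we deduce Lemma \ref{Composition Is A Unit}'' and your argument is precisely that deduction, spelled out via associativity of the Chas--Sullivan module action, the identification $[\sN]\cdot[\sM]=[\sQ]=[L]$, and the unit axiom (together with the bookkeeping that $[L]$ lands in the $j=0$ summand so that $p$ does nothing).
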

            \begin{proof}[Proof of Lemma \ref{Product Is Intersection}]
                Consider the following diagram.
                \[\begin{tikzcd}
                    \Sphere \arrow[d, "{[\sN] \wedge [\sM]}"] \arrow[dr, "{\left[\sQ\right]}"]& \\
                    \sN^{-T\sN} \wedge R \wedge \sM^{-T\sM} \wedge R \arrow[d, "\mathrm{Thom}"] \arrow[r, "i_\Shrek"] & 
                    \sQ^{-\left(T\sN + T\sM - \pi^*TL\right)} \wedge R \arrow[d, "\mathrm{Thom}"]\\
                    \bigvee\limits_j \Sigma^j \sN^{-\pi^*TL} \wedge R \wedge \sM^{-\pi^*TL} \wedge R \arrow[d, "\sigma_- \wedge \sigma_+"] \arrow[r, "i'_\Shrek"]&
                    \bigvee\limits_j \Sigma^j \sQ^{-\pi^*TL} \wedge R \arrow[d, "\sigma"] \\
                    \bigvee\limits_j \Sigma^j \sL_{-1}^{-TL} \wedge R \wedge \sL_{+1}^{-TL} \wedge R \arrow[r, "\mu"]&
                    \bigvee\limits_j \Sigma^j \sL_0^{-TL} \wedge R
                \end{tikzcd}\]
                The two arrows labelled $\mathrm{Thom}$ are the isomorphisms from the Thom isomorphism theorem for our choices of $R$-orientations on $T\sM-\pi^* TL$ and $T\sN - \pi^* TL$ followed by inclusion into this wedge product, and $i_!$ and $i'_!$ are obtained by applying Definition \ref{PT without smoothness} to the embedding $$\sQ \hookrightarrow \sN \times \sM$$
                using the virtual vector bundles $-T\sN \boxplus -T\sM$ and $-\pi^*TL \boxplus -\pi^*TL$ respectively.\par 
                The top triangle commutes by Lemma \ref{PT functorial}, the middle square commutes by naturality of the Thom isomorphism, and the bottom square commutes by construction of the product map $\mu$. Therefore the entire diagram commutes.\par 
                The composition down along the left and across is given by:
                $$\mu \circ (\sigma_- \wedge \sigma_+) \circ \mathrm{Thom} \circ [\sN] \wedge [\sM] = [\sN] \cdot [\sM]$$
                whereas composition down along the right is given by
                $$\sigma \circ \mathrm{Thom} \circ [\sQ] = [\sQ]$$
                so the result follows.
            \end{proof}
            \begin{proof}[Proof of Lemma \ref{Gluing}]
                By Lemma \ref{Gluing Lemma}, there exists an $R$-orientable cobordism $\sW$ from $\sQ$ to $L$, with respect to the given $R$-orientations on both ends. Furthermore, evaluating along the boundary allows us to extend the maps $\sigma: \sQ \rightarrow \sL_0$ and $L \hookrightarrow \sL_0$ to the entirety of $\sW$. Therefore the result follows from Lemma \ref{Cobodism Implies Same Fundamental Class}.
            \end{proof}
        \subsection{Proof of Theorem \ref{Looped Theorem}}\label{sec:5.7}
            \begin{proof}[Proof of Theorem \ref{Looped Theorem}]
                Lemma \ref{Product commutes with monodromy} implies that the following diagram commutes up to homotopy:
                \[\begin{tikzcd}
                    \Sigma^\infty_+\sL_0 \wedge R \arrow[r, "{[\sM] \cdot}"] \arrow[d, "\Psi^1"] & 
                    \bigvee\limits_{j} \Sigma^{\infty + j}_+\sL_1 \wedge R \arrow[r, "{[\sN] \cdot}"] \arrow[d, "\Psi^1"] & 
                    \bigvee\limits_{j} \Sigma^{\infty + j}_+\sL_0 \wedge R \arrow[d, "\Psi^1"]\arrow[r, "p"] & 
                    \Sigma^\infty_+\sL_0 \wedge R \arrow[d, "\Psi^1"]\\ 
                    \Sigma^\infty_+\sL_0 \wedge R \arrow[r, "{\Psi^1_*[\sM] \cdot}"] & 
                    \bigvee\limits_{j} \Sigma^{\infty + j}_+\sL_1 \wedge R \arrow[r, "\Psi^1_*{[\sN]}\cdot"] & 
                    \bigvee\limits_{j} \Sigma^{\infty + j}_+\sL_0 \wedge R \arrow[r, "p"] & 
                    \Sigma^\infty_+\sL_0 \wedge R
                \end{tikzcd}\] 
                By Lemma \ref{Monodromy is trivial on L1}, the second vertical arrow is homotopic to the identity, and the horizontal arrows along the top are all homotopic to the corresponding horizontal arrows along the bottom. Furthermore Lemma \ref{Composition Is A Unit} tells us that the composition along the top (and similarly along the bottom) is an equivalence. It follows that the map 
                $$\Psi^1: \Sigma^\infty_+ \sL_0 \wedge R \rightarrow \Sigma^\infty_+ \sL_0 \wedge R$$ 
                is homotopic to the identity. Note that all maps and homotopies we used here were $R$-linear.
            \end{proof}
            \section{Families over other bases}\label{Other Bases Section}
        Our goal in this section will be to prove Theorem \ref{Bigger Bases}. In this section, we only consider mod-2 singular homology. We assume all spaces in this section are homotopy equivalent to CW complexes. We will use the following pair of purely topological lemmas:
        \begin{LL}[{\cite[Lemma 4.3]{Lalonde-McDuff}}]\label{Bootstrapping Upwards}
            Fix fibre bundles $A \hookrightarrow B \twoheadrightarrow C$ and $F \hookrightarrow E \twoheadrightarrow B$. Assume that the fibre bundles
            $$E|_A \hookrightarrow E \twoheadrightarrow C$$
            and
            $$F \hookrightarrow E|_A \twoheadrightarrow A$$
            both c-split. Then the fibre bundle $F \hookrightarrow E \twoheadrightarrow B$ c-splits.
        \end{LL}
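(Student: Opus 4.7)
The plan is to argue directly by factoring the fibre inclusion. The key observation is that a fibre of $F \hookrightarrow E \twoheadrightarrow B$ over a point $b \in B$ is literally the same subspace of $E$ as the fibre of $F \hookrightarrow E|_{A_c} \twoheadrightarrow A_c$ over $b$, where $A_c \subseteq B$ denotes the fibre of $B \twoheadrightarrow C$ over the image $c$ of $b$. This is immediate from the fact that $E|_{A_c}$ is by definition the preimage of $A_c$ under $E \twoheadrightarrow B$.

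Concretely, I would fix an arbitrary $b \in B$, set $c$ to be its image in $C$, and denote the common fibre by $F_b$. The inclusion $F_b \hookrightarrow E$ then factors as
$$F_b \hookrightarrow E|_{A_c} \hookrightarrow E.$$
Hypothesis (3) says the first arrow $F_b \hookrightarrow E|_{A_c}$ is injective on $H_*(-;\mathbb{Z}/2)$, and hypothesis (2) says the second arrow $E|_{A_c} \hookrightarrow E$ is injective on $H_*(-;\mathbb{Z}/2)$. A composition of injections is an injection, so $F_b \hookrightarrow E$ is injective on mod-2 singular homology. Since this works for every $b \in B$ independently, the bundle $F \hookrightarrow E \twoheadrightarrow B$ c-splits.

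There is essentially no technical obstacle here; the argument is a one-line composition once the factorisation is identified. The only check I would want to make explicit is that the relevant factorisation exists fibrewise, which follows from the defining pullback property of $E|_{A_c}$. I would also note that hypothesis (1), c-splitting of $A \hookrightarrow B \twoheadrightarrow C$, does not actually enter the argument; it appears in the formulation because in the original setting of \cite{Lalonde-McDuff} the three hypotheses arise naturally together (and collectively describe the Leray--Hirsch picture for the full three-layer bundle), but for the purely topological statement used here, hypotheses (2) and (3) already suffice.
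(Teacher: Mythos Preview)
Your proof is correct and matches the paper's own argument essentially verbatim: the paper also factors the fibre inclusion as $F \hookrightarrow E|_A \hookrightarrow E$ and invokes hypotheses (3) and (2) for injectivity of each piece. Your remark that hypothesis (1) is not actually used is accurate and consistent with the paper's proof, which likewise makes no appeal to it.
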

        \begin{proof}
            The inclusion of a fibre $F \hookrightarrow E$ is the composition $F \hookrightarrow E|_A$ and $E|_A \hookrightarrow E$. The hypothesis of the lemma is that both these maps induce injections on mod-2 singular homology.
        \end{proof}
        \begin{LL}[{\cite[Lemma 4.1(ii)]{Lalonde-McDuff}}]\label{Surjection Lemma}
            Let $E \twoheadrightarrow B$ be a fibre bundle, and ${f: B' \rightarrow B}$ a map which is surjective on mod-2 singular homology. Assume that the pullback bundle $f^* E \twoheadrightarrow B'$ c-splits. Then $E \twoheadrightarrow B$ c-splits.
        \end{LL}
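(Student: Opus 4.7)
The plan is to compare the mod-$2$ Serre spectral sequences of the two fibrations via the natural map of fibre bundles
\[
\begin{tikzcd}
f^* E \arrow[r, "\tilde f"] \arrow[d] & E \arrow[d] \\
B' \arrow[r, "f"] & B
\end{tikzcd}
\]
which covers the identity on the common fibre $F$. In the applications the monodromy of $E \twoheadrightarrow B$ acts trivially on $H_*(F; \Z/2)$ (by Theorem \ref{HLL theorem} applied to the fibres over loops), so both Serre spectral sequences have simple coefficients and $E^2_{p,q} = H_p(B; \Z/2) \otimes H_q(F; \Z/2)$, with $\tilde f_*$ acting as $f_* \otimes \mathrm{id}_{H_*(F; \Z/2)}$ on $E^2$. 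Since $f_*$ is surjective on mod-$2$ homology and tensoring with a $\Z/2$-vector space preserves surjectivity, $\tilde f_*$ is surjective on the $E^2$-page.

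The c-splitting hypothesis on $f^*E \twoheadrightarrow B'$ is equivalent, over the field $\Z/2$, to the collapse of its Serre spectral sequence at $E^2$; in particular every differential $d^r$ on the pullback side vanishes for $r \geq 2$. I would argue inductively that $\tilde f_*$ remains surjective on every $E^r$-page: given a class in $E^r_{p,q}(E)$, by induction lift a representative to some $\tilde y \in E^r_{p,q}(f^*E)$; since $d^r \tilde y = 0$ automatically on the pullback side, $\tilde y$ survives to a class in $E^{r+1}(f^*E)$ mapping to the desired class in $E^{r+1}(E)$. Then, to conclude c-splitting of $E \twoheadrightarrow B$, suppose for contradiction that some incoming differential $d^r\colon E^r_{r, q-r+1}(E) \to E^r_{0, q}(E)$ is nonzero on a class $x$. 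Lift $x$ to $\tilde x \in E^r_{r, q-r+1}(f^*E)$; by naturality
\[ d^r(x) = d^r(\tilde f_* \tilde x) = \tilde f_*(d^r \tilde x) = \tilde f_*(0) = 0, \]
contradiction. Hence all differentials entering the $0$-row of the $E$ spectral sequence vanish, so $E^\infty_{0,q}(E) = E^2_{0,q}(E) = H_q(F; \Z/2)$ and the edge map $H_*(F; \Z/2) \hookrightarrow H_*(E; \Z/2)$ is injective, as required.

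The main obstacle is really bookkeeping: making sure that surjectivity of $\tilde f_*$ persists from $E^r$ to $E^{r+1}$, which hinges crucially on $d^r$ being identically zero on the pullback side (without this, a lift could fail to be a cycle). A secondary subtlety is the simple-coefficient assumption; in our applications it follows from Theorem \ref{HLL theorem} applied fibrewise, but the argument can alternatively be run with local coefficients, using that the local system on $f^*E \to B'$ is the pullback under $f$ of that on $E \to B$ (so tensoring by $H_q(F;\Z/2)$ is replaced by tensoring by the appropriate local system of $\Z/2$-vector spaces, with surjectivity at $E^2$ still following from surjectivity of $f_*$).
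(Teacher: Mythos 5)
The paper offers no proof of this lemma---it is quoted directly from Lalonde--McDuff---so there is nothing in the text to compare against; the Serre spectral sequence comparison you run is the expected route. Your main argument, in the simple-coefficient case, is correct: over $\Z/2$ the c-splitting of $f^*E \twoheadrightarrow B'$ forces its Serre spectral sequence to degenerate at $E^2$ (here one should note this uses that $H_*(F;\Z/2)$ is a finite-dimensional $\Z/2$-vector space, via Leray--Hirsch or a dimension count, which is fine since $F = L$ is compact), the comparison map $f_* \otimes \mathrm{id}$ is surjective on $E^2$, surjectivity propagates to each $E^r$ because the source carries no differentials, and naturality then kills every differential landing in the zero column of the target.

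The local-coefficient ``patch'' in your last paragraph, however, does not close the gap you flag, and the issue is more than bookkeeping. Even if the comparison is surjective on $E^2$ with local coefficients and the target spectral sequence degenerates, what survives to $E^\infty_{0,q}(E)$ is $E^2_{0,q}(E) = H_0(B;\mathcal{H}_q) = H_q(F;\Z/2)_{\pi_1(B)}$, the coinvariants under monodromy, and the fibre inclusion $H_q(F;\Z/2) \to H_q(E;\Z/2)$ factors through this quotient; degeneration alone never recovers injectivity if the action is nontrivial. Surjectivity of $f_*$ on mod-$2$ homology does not force the monodromy of $E \twoheadrightarrow B$ to be trivial when that of $f^*E \twoheadrightarrow B'$ is. Concretely, let $B = B(\Z/6)$, $B' = B(\Z/2)$, $f$ induced by $\Z/2 \cong \{0,3\} \hookrightarrow \Z/6$, and $E$ the Borel construction $E\Z/6 \times_{\Z/6} T^2$ where $\Z/6$ acts through $\Z/6 \twoheadrightarrow \Z/3$ by the order-$3$ torus diffeomorphism $\left(\begin{smallmatrix}0 & -1 \\ 1 & -1\end{smallmatrix}\right)$. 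Then $f_*$ is an isomorphism on $H_*(\cdot;\Z/2)$ and $f^*E \cong B' \times T^2$ c-splits, but $H_1(T^2;\Z/2) \to H_1(E;\Z/2)$ is the zero map since the $\Z/3$-coinvariants of $(\Z/2)^2$ vanish. So the honest statement needs the extra hypothesis that the monodromy of $E \twoheadrightarrow B$ on $H_*(F;\Z/2)$ is trivial---which, as you observe, is automatic in every application here (via Theorem~\ref{HLL theorem}, or simply because $B = S^i$ is simply connected in Theorem~\ref{Bigger Bases}(ii)). With that made explicit and the local-coefficient alternative dropped, your argument is complete.
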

        \begin{proof}[Proof of Theorem \ref{Bigger Bases}]
            We first assume that $X = (S^1)^i$.\par 
            The case $i = 0$ is trivial and the case $i = 1$ follows from Theorem \ref{HLL theorem} along with an application of the Mayer-Vietoris sequence. We proceed by induction, and assume we know the result for $i - 1$, for all $M$ and $L$ as in the statement of Theorem \ref{HLL theorem}.\par
            We let $E = \gamma^* \sE$. Then we have two fibre bundles
            $$L \hookrightarrow E \twoheadrightarrow (S^1)^i$$
            and
            $$S^1 \hookrightarrow (S^1)^i \twoheadrightarrow (S^1)^{i-1}$$
            where the inclusion map is inclusion to the first factor and the projection map is projection to all the other factors.\par 
            The fibre bundle $E|_{S^1} \twoheadrightarrow S^1$ c-splits by Theorem \ref{HLL theorem}. So by Lemma \ref{Bootstrapping Upwards}, it suffices to show the fibre bundle given by the composition $$E \twoheadrightarrow (S^1)^i \twoheadrightarrow (S^1)^{i-1}$$ c-splits.\par 
            For $\tau$ in $(S^1)^{i-1}$, we let $\gamma_\tau$ be the map $S^1 \rightarrow Lag_L$ given by $\gamma_\tau(t) = \gamma(t, \tau)$.\par 
            We perturb the map $\gamma: S^1 \times (S^1)^{i-1}$ so that for each $\tau$ in $(S^1)^{i-1}$, $\gamma_\tau(t)$ is constant in $t$ in a neighbourhood of 0.
            \begin{MM}\label{Existence of Hams}
                There exists a smooth map $H: S^1 \times (S^1)^{i-1} \rightarrow \R$ such that for $\tau$ in $(S^1)^{i-1}$, the Hamiltonian $H_\tau: M \times S^1 \rightarrow \R$ generates the Hamiltonian isotopy of Lagrangians $\gamma_\tau$.\par 
                More explicitly, this means that the Hamiltonian flow of $H_\tau$ applied to the Lagrangian $\gamma_\tau(0)$ is $\gamma_\tau$.
            \end{MM}
            \begin{proof}[Proof of Claim \ref{Existence of Hams}]
                For each $(t_0, \tau)$  in $S^1 \times (S^1)^{i-1}$, let $V_{t_0, \tau}$ be the space of compactly supported smooth maps $f: M \rightarrow \R$ such that the Hamiltonian flow of $f$ applied to $\gamma_\tau(t_0)$ agrees with $\gamma_\tau(t_0 + \cdot)$ to first order.\par 
                Together these determine a fibre bundle over $S^1 \times (S^1)^{i-1}$. Each $V_{t_0, \tau}$ is convex and hence contractible, so we can choose some smooth section of this fibre bundle. This gives us the map $H \times S^1 \times (S^1)^{i-1} \rightarrow \R$ that we require.
            \end{proof}
            We now use this family of Hamiltonians $H_\tau$ to construct a suspension of each Lagrangian isotopy $\gamma_\tau$, as follows.  We define a map
            $$f: E \hookrightarrow M \times T^* S^1 \times (S^1)^{i-1}$$
            by 
            $$f(x) = (x, t, H_\tau(x, t), \tau)$$
            where $x$ lies in the fibre over $(t, \tau)$ in $S^1 \times (S^1)^{i-1}$, and we identify $T^* S^1 $ with $S^1 \times \R$ in the usual way. Note that $E \rightarrow (S^1)^{i-1}$ is a fibre bundle.\par 
            A direct computation shows that for each $\tau$, this gives a relatively exact Lagrangian in $M \times T^* S^1$, therefore this realises $E \twoheadrightarrow (S^1)^{i-1}$ as a family of relatively exact Lagrangians in $M \times T^*S^1$, and therefore by the induction hypothesis it c-splits. \par 
            Therefore by Lemma \ref{Bootstrapping Upwards}, the fibre bundle $E \twoheadrightarrow (S^1)^{i}$ c-splits.\par 
            For the general case, we consider the composition $(S^1)^i \xrightarrow{f} X \xrightarrow{\gamma} Lag_L$. Since $(f \circ \gamma)^* \sE \twoheadrightarrow (S^1)^i$ c-splits, by Lemma \ref{Surjection Lemma}, $\gamma^* \sE \twoheadrightarrow X$ also c-splits.
        \end{proof}
        \appendix
            \section{An example in complex $K$-Theory}\label{K-theory Example}
                Our goal in this section is to prove the following.
                \begin{PP}\label{Kexample}
                    In all sufficiently high dimensions $n$, there exists a closed $n$-dimensional manifold $V$ such that:
                    \begin{enumerate}
                        \item $V$ is stably parallelisable (and hence spin).
                        \item $V$ admits a self-diffeomorphism $\theta: V \rightarrow V$, which acts as the identity on integral cohomology $H^*(V)$, but does not act as the identity on $K^*(V)$, where $K^*$ denotes complex $K$-theory.
                        \item If $n$ is odd, we can take $V$ to be a simply-connected rational homology sphere.
                    \end{enumerate}
                \end{PP}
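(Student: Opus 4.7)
The plan is to exploit a gap between integral cohomology and complex $K$-theory torsion. Rationally, $K^*(V)\otimes\Q$ is determined by $H^*(V;\Q)$ via the Chern character, so any $\theta$ acting trivially on $H^*(V;\Z)$ acts trivially on $K^*(V)\otimes\Q$, and one must produce a non-trivial action on torsion in $K^*(V)$. The crucial input is that the Atiyah--Hirzebruch spectral sequence for $K^*$ can have non-trivial extensions producing cyclic summands of order strictly larger than any single torsion group in $H^*(V;\Z)$, so that an action trivial on each filtration quotient can still be non-trivial on the whole extension.

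For the construction of $V$, I would begin with $S^n$, which is stably parallelisable, and perform framed surgeries along unknotted spheres with carefully chosen framings so as to introduce $\Z/p$ summands in $H^{a_i}(V;\Z)$ in several degrees $a_i$ (for definiteness take $p=2$). Framed surgeries preserve stable parallelisability. By analysing the Atiyah--Hirzebruch spectral sequence for $K^*(V)$, and choosing the degrees $a_i$ so that the low-filtration differential $d_3 = \mathrm{Sq}^3_{\Z}$ vanishes on the new generators, one arranges for $\tilde K^0(V)$ to contain a cyclic summand of order $p^2 = 4$ arising from a non-trivial extension between two $\Z/2$-summands of $H^*(V;\Z)$. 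For part (3), working in odd $n$ and placing all surgeries in middle degrees (strictly between $0$ and $n$) leaves $V$ a rational homology sphere.

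For $\theta$ I would use a twisted cut-and-paste construction. Choose an unknotted embedded sphere $\Sigma^j\subset V$ with trivial normal bundle and a framing-twist $\alpha\in\pi_j(SO(n-j))$ whose image in $\pi_j(BU)\cong\tilde K^0(S^j)$ is non-trivial; such $\alpha$ exist for infinitely many $j$ by Bott periodicity and the non-triviality of the complexification map $\pi_*(SO)\to\pi_*(U)$. Define $\theta$ to be the identity outside the tubular neighbourhood $\Sigma^j\times D^{n-j}$, and inside to act by $(x,y)\mapsto(x,\alpha(x)\cdot y)$. A Mayer--Vietoris computation shows that $\theta$ acts trivially on $H^*(V;\Z)$, since $\alpha$ acts trivially on the cohomology of a trivialised disc bundle. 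However, the difference $\theta^*-\mathrm{id}$ on $K$-theory classes factors through the Thom--collapse map $\tilde K^0(V)\to\tilde K^0(\Sigma^j\times D^{n-j},\partial)$ followed by the clutching map for $\alpha$, and is non-trivial provided the image of the generator of $\tilde K^0(S^j)$ under the tubular pushforward $\tilde K^0(S^j)\to\tilde K^0(V)$ is non-zero.

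The main obstacle is ensuring this non-vanishing of the tubular pushforward: specifically, that the image of $\alpha$ under the composition $\pi_j(SO)\to\pi_j(BU)\to\tilde K^0(V)$ survives the AHSS filtration on $V$ and is detected by the cyclic $\Z/4$-summand constructed in the second step. This compatibility is what dictates the precise choices of attaching maps and framings in the surgery construction of $V$, and is checked by a direct computation on AHSS generators. I expect this final compatibility to be the most delicate step, and the one where a careful case analysis (likely dependent on $n\bmod 8$) is required to complete the argument in all sufficiently large $n$ simultaneously.
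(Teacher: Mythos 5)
Your approach is genuinely different from the paper's, but it has serious gaps, the most basic of which is that the diffeomorphism $\theta$ you describe is not well-defined. You propose $\theta(x,y)=(x,\alpha(x)\cdot y)$ on a tubular neighbourhood $\Sigma^j\times D^{n-j}$ with $\alpha\in\pi_j(SO(n-j))$, extended by the identity. But for this to be continuous one needs $\theta$ to equal the identity on the boundary $\Sigma^j\times S^{n-j-1}$, i.e.\ $\alpha(x)z=z$ for all $x\in\Sigma^j$ and $|z|=1$, which forces $\alpha\equiv\mathrm{id}$. Twist constructions of this flavour do exist (e.g.\ using elements of $\pi_{n-j}(\mathrm{Diff}(\Sigma^j))$ or modifying the gluing of $\partial T$ and checking the new manifold is still diffeomorphic to $V$), but the one you write down does not close up, and repairing it is not cosmetic: the resulting diffeomorphism lives in a different homotopy group and the ``clutching'' analysis you sketch would have to be redone from scratch.

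Beyond that, the heart of your argument---that $\theta$ acts trivially on $H^*(V;\Z)$ yet nontrivially on a $\Z/4$ hidden in an AHSS extension for $K^*(V)$---is exactly the part you defer (``the most delicate step''), and it is far from automatic. A diffeomorphism that acts trivially on all of $H^*(V;\Z)$ acts trivially on the associated graded of the AHSS filtration, so it can only be seen on $K^*(V)$ through a filtration-nontrivial automorphism of a nontrivial extension; you would need to construct $V$ so that such an extension exists \emph{and} verify by a direct computation that your specific $\theta$ realises a unit $\neq 1\pmod{p^2}$ on it. You neither produce the extension nor verify the action, so the proof is incomplete even granting a corrected twist construction.

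The paper sidesteps all of this by sourcing the ``phantom'' $K$-theory action from homotopy theory rather than trying to build it geometrically. It starts from a mod-$p$ Moore space $Y$ and uses Adams' $v_1$-self-map $g:\Sigma^{2(p-1)}Y\to Y$ (Adams, $J(X)$ IV, Theorem 1.7), which is an isomorphism on $\tilde K^*$ but necessarily zero on $\tilde H^*$ and $\tilde H_*$ for degree reasons. Forming $Z=\Sigma^{2(p-1)}Y\vee Y$ and a ``unipotent shear'' $f:Z\to Z$ built from the pinch map and $g$, one gets a self-homotopy-equivalence that is trivial on $H^*$ but acts by an off-diagonal unipotent on $\tilde K^*(Z)$. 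The manifold is then produced by thickening: embed $Z$ in $\R^N$ for large $N$, take a regular neighbourhood $W$ (stably parallelisable, since it is a codimension-$0$ submanifold of $\R^N$) and let $V=\partial W$. Homotope $f$ to an embedding $W\hookrightarrow W$ and upgrade it to a diffeomorphism $\phi$ via the $h$-cobordism theorem; set $\theta=\phi|_V$. The long exact sequence of $(W,\partial W)$, Poincar\'e/Atiyah duality, and the (easy) computation of $\tilde K^*(W)$ and $K_*(W)$ then show that $\theta$ still sees the nontrivial $K^*$-action while acting trivially on $H^*(V)$. This route avoids any AHSS extension bookkeeping and any delicate twist/framing analysis, at the cost of invoking Adams' theorem and the $h$-cobordism theorem---a trade that is very much worth making here.
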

                \begin{CC}
                    If $L$ is diffeomorphic to $V$, then $\theta$ does not lie in $\sG_L$.
                \end{CC}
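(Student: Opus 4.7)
The plan is to argue by contradiction, applying Theorem \ref{Main Theorem} with the ring spectrum $R = KU$. Suppose $\theta$ lies in $\sG_L$, i.e.\ there is a Hamiltonian isotopy $\psi^t$ of $M$ with $\psi^1(L) = L$ and $\psi^1|_L$ smoothly isotopic to $\theta$ under some identification $L \cong V$. I want to reach a contradiction with property (2) of $V$.

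First I would verify Assumption \ref{Orientation Assumption} for $R = KU$. Property (1) of $V$ says $V$ is stably parallelisable, hence in particular spin, so part (3) of Proposition \ref{Orientation Criteria} applies and $\Ind - \pi^* TL$ is $KU$-orientable. Moreover $TL \cong TV$ is itself $KU$-orientable because $V$ is spin (spin structures induce $\mathrm{spin}^c$-structures, which are exactly $KU$-orientations of the tangent bundle).

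Next I would invoke Corollary \ref{Injectivity Corollary} (the cohomological half of the corollary following Theorem \ref{Main Theorem}): since both $\Ind - \pi^* TL$ and $TL$ are $KU$-orientable, $\psi^1|_L$ induces the identity map on $KU^*(L) = K^*(V)$. However, by property (2), $\theta^{*}$ is not the identity on $K^*(V)$. Since $\psi^1|_L$ is smoothly isotopic to $\theta$ (in particular homotopic), the induced maps on $K^*$ agree, giving the desired contradiction. Hence $\theta \notin \sG_L$.

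There is essentially no obstacle here: the work has all been done by Theorem \ref{Main Theorem} and Proposition \ref{Orientation Criteria}, and the role of Appendix \ref{K-theory Example} is precisely to produce the witness $V$ showing that Theorem \ref{Main Theorem} for $R = KU$ is strictly stronger than its integral cohomology counterpart from \cite{Hu-Lalonde-Leclercq}. The only point to keep in mind is that one must use the $K$-theory conclusion rather than the integral one, since (2) explicitly allows $\theta$ to act trivially on integral cohomology.
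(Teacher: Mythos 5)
Your argument is correct and is exactly the intended (if unwritten) proof in the paper: apply Theorem \ref{Main Theorem} with $R = KU$, using Proposition \ref{Orientation Criteria}(3) (spin $\Rightarrow$ Assumption \ref{Orientation Assumption} holds for $KU$) and the fact that $TV$ is $KU$-orientable because $V$ is spin, to conclude that any $\psi^1|_L \in \sG_L$ acts trivially on $K^*(L)$, contradicting property (2) of $V$.
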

                The rest of the section will be dedicated to a proof of Proposition \ref{Kexample}, following a suggestion of Randal-Williams (\cite{R-W}). We will construct these in sufficiently high odd dimensions and then observe that taking a product with $S^1$ provides the desired examples in all sufficiently high even dimensions.\par 
                Let $q$ be a positive integer, and $p$ an odd prime. Let $Y$ be the homotopy mapping cone of the map $S^{2q} \rightarrow S^{2q}$ of degree $p$. 
                \begin{LL}\label{Komputation}
                We have the following:
                    \begin{enumerate}
                        \item $$\tilde{K}^i(Y) \cong \begin{cases}
                            0 & \, i \, \mathrm{even}\\
                            \Z / p & \, i \, \mathrm{odd}
                        \end{cases}$$
                        \item When $i$ is odd,
                            $$K_i(Y) \cong 0$$
                        \item $$\tilde{H}^i(Y) \cong \begin{cases}
                            \Z / p & \, i = 2q + 1\\
                            0 & \, \mathrm{otherwise}
                        \end{cases}$$
                        \item $$\tilde{H}_i(Y) \cong \begin{cases}
                            \Z / p & \, i = 2q \\
                            0 & \, \mathrm{otherwise}
                        \end{cases}$$
                        \item For all $i$,
                            $$\tilde{H}^i(Y; \Q) = 0$$
                    \end{enumerate}
                \end{LL}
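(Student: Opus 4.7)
The plan is to exploit the fact that $Y$ is by construction the mod-$p$ Moore space $S^{2q} \cup_p e^{2q+1}$, and then deduce all five statements from standard long exact sequences associated to the cofibre sequence
$$S^{2q} \xrightarrow{p} S^{2q} \rightarrow Y \rightarrow S^{2q+1} \xrightarrow{p} S^{2q+1} \rightarrow \cdots$$
together with the known (co)homology and $K$-theory of spheres.

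First I would establish (3), (4) and (5) via cellular methods. The mapping cone construction equips $Y$ with a CW structure having one cell each in dimensions $0$, $2q$ and $2q+1$, and the only nontrivial cellular boundary is $\partial: C_{2q+1} \cong \Z \xrightarrow{\cdot p} \Z \cong C_{2q}$. Reading off kernels and cokernels yields (4) immediately; dualising (or applying the universal coefficient theorem) gives (3); tensoring with $\Q$ makes $\cdot p$ invertible, producing (5).

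For (1) and (2), I would apply the cohomology and homology functors for $KU$ to the cofibre sequence above. The key input is that the degree-$p$ self-map of $S^{2q}$ induces multiplication by $p$ on $\tilde{K}^0(S^{2q}) \cong \Z$ and on $\tilde{K}_0(S^{2q}) \cong \Z$; this follows from naturality of the Bott isomorphism (or from the fact that $[p]^*$ acts as $p$ on $\tilde H^{2q}$ and the Chern character is rationally injective on spheres). Combined with Bott periodicity, $\tilde{K}^i(S^{2q})$ is $\Z$ for $i$ even and $0$ for $i$ odd, and the induced map $p^*$ is multiplication by $p$ in even degrees. The long exact sequence then splits into four-term pieces
$$0 \to \tilde{K}^{\mathrm{even}}(Y) \to \Z \xrightarrow{\cdot p} \Z \to \tilde{K}^{\mathrm{odd}}(Y) \to 0,$$
giving (1). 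An identical argument with $\tilde K_*$ in place of $\tilde K^*$, using that $\tilde K_i(S^{2q+1})$ is $\Z$ when $i$ is odd, yields (2).

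There is essentially no obstacle: each item is a standard computation once the CW/cofibre structure is in hand. The only point that requires a line of care is verifying that the induced map on $\tilde K$ of the sphere really is multiplication by $p$ — this is where I would cite Bott periodicity together with the Hurewicz/Chern character identification $\tilde K^0(S^{2q}) \cong \tilde H^{2q}(S^{2q};\Z)$ so that a degree-$p$ map acts as $\cdot p$. Everything else is bookkeeping with long exact sequences.
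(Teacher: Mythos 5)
Your approach is the same as the paper's: everything is read off from the long exact sequence of the cofibre (cone) sequence, together with the cellular chain complex for the ordinary (co)homology parts. The one small thing your proposal glosses over is that item (2) concerns the \emph{unreduced} $K$-homology $K_i(Y)$, whereas the long exact sequence argument only directly gives $\tilde K_i(Y)=0$ for $i$ odd. You then need the extra (easy) observation that $K_i(Y)\cong\tilde K_i(Y)\oplus K_i(\mathrm{point})$ and $K_i(\mathrm{point})=0$ for $i$ odd by Bott periodicity; the paper explicitly calls this step out. Everything else — the cellular computation for (3)–(5), the identification of $p^*$ with multiplication by $p$ on $\tilde K^0(S^{2q})\cong\Z$ via the Chern character, and the four-term splitting of the $K$-theory long exact sequence — is correct and matches what the paper intends.
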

                \begin{proof}
                    Follows from the long exact sequence of a cone. (2) also uses the decomposition
                    $$K_i(Y) \cong \tilde{K}_i(Y) \oplus K_i(\mathrm{point})$$
                \end{proof}
                By \cite[Theorem 1.7]{AdamsJIV}, for sufficiently large $q$, there is a map
                $$g: \Sigma^{2(p - 1)} Y \rightarrow Y$$ 
                which induces an isomorphism on $\tilde{K}^*$, but must be 0 on $\tilde{H}^*$ and $\tilde{H}_*$ for degree reasons. Furthermore we can choose $q$ to be sufficiently large that $Y$ is simply connected.\par 
                Let $Z = \Sigma^{2(p - 1)} Y \vee Y$, and we define $f: Z \rightarrow Z$ to be the composition
                $$Z =\Sigma^{2(p - 1)} Y \vee Y \xrightarrow{\mathrm{Pinch} \vee Id} \Sigma^{2(p - 1)} Y \vee \Sigma^{2(p - 1)} Y \vee Y \xrightarrow{Id \vee g \vee Id} \Sigma^{2(p - 1)} Y \vee Y = Z$$
                where the first map is the pinching map, which uses the fact that 
                $$\Sigma^{2(p - 1)} Y = \Sigma \left( \Sigma^{2(p - 1) - 1} Y\right)$$
                and the second map sends the first copy of $\Sigma^{2(p-1)}Y$ to itself, the second copy of $\Sigma^{2(p-1)}Y$ to $Y$ (using $g$), and sends $Y$ to itself.
                \begin{LL} \label{reductions}
                    Let $E$ be a generalised homology or cohomology theory. Let $A$ be a connected finite CW complex, and $h: A \rightarrow A$ some map.\par 
                    Then $h$ acts as the identity on $E(A)$ if and only if $h$ acts as the identity on $\tilde{E}(A)$.
                \end{LL}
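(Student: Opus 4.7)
The plan is to exploit the natural splitting $E(A) \cong \tilde{E}(A) \oplus E(\mathrm{pt})$ that exists for any connected based space and any (co)homology theory, and then verify that any self-map of a connected space acts as the identity on the $E(\mathrm{pt})$ summand for purely formal reasons.

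First, I will treat the homology case, the cohomology case being entirely analogous. Pick a basepoint $a_0 \in A$ and consider the unique map $c: A \to \mathrm{pt}$ together with the inclusion $\iota_{a_0}: \mathrm{pt} \hookrightarrow A$ of the basepoint. Since $c \circ \iota_{a_0} = \mathrm{id}$, the induced map $(\iota_{a_0})_*$ exhibits $E_*(\mathrm{pt})$ as a direct summand of $E_*(A)$, and by definition the complementary summand is $\tilde{E}_*(A)$, the kernel of $c_*$.

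Next, I will verify that $h_*$ respects this splitting and acts as the identity on the $E_*(\mathrm{pt})$ factor. Naturality of $c$ (any two maps to a point are equal) gives $c_* \circ h_* = c_*$, so $h_*$ preserves the kernel $\tilde{E}_*(A)$. For the other summand, observe that $h \circ \iota_{a_0} = \iota_{h(a_0)}$, and since $A$ is path-connected, $\iota_{h(a_0)}$ is homotopic to $\iota_{a_0}$, so $(h \circ \iota_{a_0})_* = (\iota_{a_0})_*$. This says precisely that $h_*$ restricts to the identity on the image of $(\iota_{a_0})_*$, i.e.\ on the $E_*(\mathrm{pt})$ summand. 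With respect to the decomposition $E_*(A) = \tilde{E}_*(A) \oplus E_*(\mathrm{pt})$, the map $h_*$ thus takes the block form $\mathrm{diag}(\tilde{h}_*, \mathrm{id})$, and the biconditional follows immediately.

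For cohomology, the analogous argument uses the composition $\iota_{a_0}^* \circ c^* = \mathrm{id}$ to split $E^*(A) = \tilde{E}^*(A) \oplus E^*(\mathrm{pt})$, where $\tilde{E}^*(A) = \ker \iota_{a_0}^*$. Naturality of $c^*$ gives $h^* \circ c^* = c^*$, so $h^*$ acts as the identity on the $E^*(\mathrm{pt})$ summand, and connectedness of $A$ together with $\iota_{h(a_0)} \simeq \iota_{a_0}$ shows $h^*$ preserves $\tilde{E}^*(A)$. There is essentially no obstacle here: the only ingredient beyond formal manipulation is path-connectedness, used to justify $\iota_{h(a_0)} \simeq \iota_{a_0}$.
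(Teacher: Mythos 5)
Your proof is correct and takes essentially the same approach as the paper: the paper's proof invokes the splitting $E(A) \cong \tilde{E}(A) \oplus E(\mathrm{pt})$ and the observation that $h$ acts diagonally with the identity on $E(\mathrm{pt})$, which is exactly what you establish, just with the naturality and path-connectedness justifications spelled out explicitly.
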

                \begin{proof}
                    There is a canonical splitting 
                    $$E(A) \cong \tilde{E}(A) \oplus E(\mathrm{point})$$
                    $h$ acts diagonally with respect to this splitting and always acts as the identity on $E(\mathrm{point})$.
                \end{proof}
                \begin{LL}\label{actionoff}
                    Let $f: Z \rightarrow Z$ be the map constructed above.
                    \begin{enumerate}
                        \item $f$ does not act as the identity on $K^*(Z)$.\par 
                        \item $f$ acts as the identity on $H^*(Z)$ and $H_*(Z)$. Therefore by Whitehead's and Hurewicz' theorems, since $Z$ is simply connected, $f$ is a homotopy equivalence.
                    \end{enumerate}
                \end{LL}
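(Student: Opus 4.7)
The plan is to compute $f^*$ explicitly using the wedge decomposition. For any reduced cohomology theory $\tilde{E}^*$ one has
$$\tilde{E}^*(Z) \;=\; \tilde{E}^*\bigl(\Sigma^{2(p-1)} Y\bigr) \oplus \tilde{E}^*(Y).$$
Unwinding the two stages in the definition of $f$, the pinching step contributes a codiagonal on the $\Sigma^{2(p-1)} Y$ summand while fixing $Y$, and the step $Id \vee g \vee Id$ contributes $g^*$ as the only off-diagonal term, taking the $Y$-summand back to the $\Sigma^{2(p-1)} Y$-summand. Composing the two stages should give the formula
$$f^*(a, b) \;=\; \bigl(a + g^*(b),\; b\bigr)$$
on $\tilde{E}^*(\Sigma^{2(p-1)} Y) \oplus \tilde{E}^*(Y)$. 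Once this formula is in hand, both parts of the lemma follow by choosing $\tilde{E}^*$ appropriately and invoking Lemmas \ref{Komputation} and \ref{reductions}.

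For part (1), I would take $\tilde{E}^* = \tilde{K}^*$. By the defining property of $g$ (via Adams), $g^*$ is an isomorphism on $\tilde{K}^*$, and by Lemma \ref{Komputation}(1), $\tilde{K}^i(Y) \cong \Z/p$ is nonzero in every odd degree. Picking any nonzero $b$ in odd degree gives $f^*(0, b) = (g^*(b), b) \neq (0, b)$, and Lemma \ref{reductions} promotes this failure to $K^*(Z)$. For part (2), I would take $\tilde{E}^* = \tilde{H}^*$ (and analogously $\tilde{H}_*$). Lemma \ref{Komputation} shows that $\tilde{H}^*(Y)$ is concentrated in degree $2q+1$ while $\tilde{H}^*(\Sigma^{2(p-1)} Y)$ is concentrated in degree $2q + 2p - 1$; these are distinct because $p > 1$, so any graded map $g^*$ between them is automatically zero. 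Hence $f^*(a, b) = (a, b)$ on $\tilde{H}^*(Z)$, and the identical argument applied to $g_*$ handles $H_*(Z)$. Since $q$ is taken large enough that $Y$ is simply connected, both wedge summands of $Z$ are simply connected, so $Z$ is too; Whitehead's theorem then upgrades $f$ (a homology isomorphism of simply connected CW complexes) to a homotopy equivalence.

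The only delicate step is the first one, namely deriving the formula for $f^*$: I would rely on the standard fact that the pinch map on a suspension induces the codiagonal in reduced cohomology (equivalently, that $\Sigma^{2(p-1)} Y$ is an H-cogroup), after which the composition is pure bookkeeping. Everything else is driven by the contrast that $g$ is a $\tilde{K}^*$-equivalence but vanishes on ordinary (co)homology for degree reasons — which is precisely the dichotomy that Adams constructed $g$ to exhibit, and which is exactly what makes the two parts of the lemma pull in opposite directions.
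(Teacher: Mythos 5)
Your proof is correct and follows essentially the same route as the paper: compute $f^*$ on the wedge decomposition $\tilde{E}^*(Z) \cong \tilde{E}^*(\Sigma^{2(p-1)}Y) \oplus \tilde{E}^*(Y)$, obtaining a unipotent matrix with the single off-diagonal entry $g^*$, and then contrast $g^*$ being an isomorphism on $\tilde{K}^*$ with $g^*$ vanishing on $\tilde{H}^*$ and $\tilde{H}_*$ for degree reasons. Your explicit formula $f^*(a,b) = (a+g^*(b), b)$ is in fact stated more carefully than the paper's displayed matrix (which, read as acting on column vectors, places $g^*$ in the position appropriate for $f_*$ rather than $f^*$), so the only thing to note is that the two are the same map written under different conventions.
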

                \begin{proof}
                    With respect to the decomposition 
                    $$\tilde{K}^*(Z) \cong \tilde{K}^*\left(\Sigma^{2(p - 1)} Y\right) \oplus \tilde{K}^*(Y)$$
                    we see that $f^*$ is given by the block matrix
                    $$\begin{pmatrix}
                        Id & 0 \\
                        g^* & Id
                    \end{pmatrix}$$
                    and so $f$ does not act as the identity on $\tilde{K}^*(Z)$, and hence not on $K^*(Z)$ either, by Lemma \ref{actionoff}.
                    A similar argument shows the analogous results for $H^*(Z)$ and $H_*(Z)$.
                \end{proof}
                Now if $Z$ were a closed manifold with the right properties and $f$ were a diffeomorphism, we would be done, but unfortunately this is not the case.\par 
                Pick an embedding $Z \hookrightarrow \R^N$ for some large even $N > 4(p + q)$. Let $W$ be a regular neighbourhood of $Z$ with smooth boundary such that $W$ deformation retracts to $Z$. Then $W$ is a compact $N$-dimensional manifold with boundary, has one handle for each cell of $Z$, is simply connected, and for sufficiently large $N$, its boundary $\partial W$ is also simply connected. The tangent bundle $TW$ is trivial since $W$ is a codimension 0 submanifold of $\R^N$, and so $T\partial W$ is also stably trivial.\par 
                Since $Z \simeq W$, $f$ induces a map $f: W \rightarrow W$, well-defined up to homotopy. If $N$ was chosen to be sufficiently large, $f$ can be homotoped to an orientation-preserving embedding $e: W \hookrightarrow W$, which we can assume has image which does not touch $\partial W$.
                \begin{LL}
                    $e$ is homotopic to an orientation-preserving diffeomorphism $\phi: W \rightarrow W$.
                \end{LL}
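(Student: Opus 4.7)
The plan is to apply the smooth $h$-cobordism theorem to the complementary region $W' := W \setminus \operatorname{int}(e(W))$, a compact cobordism of dimension $N$ between $\partial W$ (outer) and $e(\partial W)$ (inner). First I would check that $W'$ is an $h$-cobordism. By Lemma \ref{actionoff}(2), $f : Z \to Z$ is a homotopy equivalence, hence so is $e \simeq f : W \to W$, and therefore the inclusion $e(W) \hookrightarrow W$ is a homotopy equivalence. Excision gives $H_*(W', e(\partial W)) \cong H_*(W, e(W)) = 0$, and since $W'$ and $e(\partial W)$ are simply connected (which holds for sufficiently large $N$, as $\partial W$ inherits simple connectivity from $Z$ in high codimension), Whitehead's theorem shows $e(\partial W) \hookrightarrow W'$ is a homotopy equivalence. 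For the outer end, Poincaré--Lefschetz duality for the orientable cobordism $W'$ (orientable because $TW$ is trivial) gives $H_k(W', \partial W) \cong H^{N-k}(W', e(\partial W)) = 0$, whence $\partial W \hookrightarrow W'$ is also a homotopy equivalence.

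Since $N - 1 \geq 5$ (as $N > 4(p+q) \geq 16$) and the ends are simply connected, the smooth $h$-cobordism theorem supplies a diffeomorphism $\Psi : \partial W \times [0,1] \to W'$ with $\Psi(\cdot, 0) = \operatorname{id}_{\partial W}$ and $\Psi(\cdot, 1)$ a diffeomorphism onto $e(\partial W)$. Fixing a collar $c : \partial W \times [0,1] \hookrightarrow W$ of $\partial W$, I then consider the composition
\[
W \;\xleftarrow{\;\text{absorb}\;}\; W \cup_{\partial W}\bigl(\partial W \times [0,1]\bigr) \;\xrightarrow{\;e \,\sqcup\, \Psi\;}\; e(W) \cup_{e(\partial W)} W' \;=\; W,
\]
where the left arrow identifies the appended external collar with the image of $c$ (and is isotopic to the identity) and the right arrow is $e$ on the $W$-factor and $\Psi$ on the collar. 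The resulting self-diffeomorphism $\phi : W \to W$ agrees with $e$ away from the collar, and the collar image, which lies in $W' \cong \partial W \times [0,1]$, can be deformation retracted onto $e(\partial W) \subseteq e(W)$, giving a homotopy $\phi \simeq e$. Orientation preservation of $\phi$ is inherited from that of $e$ after choosing $\Psi$ to be orientation-preserving, which is automatic since $\partial W \times [0,1]$ is a product.

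The main obstacle is establishing the two halves of the $h$-cobordism condition, particularly the outer end, which requires Poincaré--Lefschetz duality and hence parallelizability of $W$, together with ensuring that $N$ has been chosen large enough so that $\partial W$, $W'$ and $e(\partial W)$ are all simply connected, enabling both Whitehead's theorem and the $h$-cobordism theorem. Once $W'$ is identified with the trivial cobordism, the construction of $\phi$ is the standard procedure of sliding $e(W)$ outward to $W$ along the product structure, and the orientation and homotopy conditions fall out of the construction.
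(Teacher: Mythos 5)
Your proof follows the same route as the paper: take the region $W' = W \setminus \operatorname{int}(e(W))$, show it is a simply connected $h$-cobordism between $\partial W$ and $e(\partial W)$ (via excision, Poincar\'e--Lefschetz duality, and Whitehead), and then apply the smooth $h$-cobordism theorem. The paper is terse at the final step (``and so the result follows''), and you commendably try to fill in the construction of $\phi$; however, your gluing formula has a defect. You take the product structure $\Psi : \partial W \times [0,1] \to W'$ with $\Psi(\cdot,0) = \mathrm{id}_{\partial W}$ and then write down $e \sqcup \Psi : W \cup_{\partial W} (\partial W \times [0,1]) \to e(W) \cup_{e(\partial W)} W'$. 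This map is not well defined: on the common locus $\partial W$ along which the two pieces of the domain are glued, $e$ sends $x$ to $e(x) \in e(\partial W)$, while $\Psi$ at the matching end of the collar either sends $x$ to $\Psi(x,0) = x \in \partial W$ (the wrong boundary component of $W'$ altogether), or, if you attach the collar along the $\{1\}$-end, sends $x$ to $\Psi(x,1) \in e(\partial W)$ --- but the $h$-cobordism theorem gives no control over $\Psi(\cdot,1)$, so there is no reason for it to coincide with $e|_{\partial W}$.

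The fix is to base the product structure at the $e(\partial W)$-end instead: the $h$-cobordism theorem gives $\Psi' : e(\partial W) \times [0,1] \to W'$ with $\Psi'(\cdot,0) = \mathrm{id}_{e(\partial W)}$. Then $\mathrm{id} \cup \Psi' : e(W) \cup_{e(\partial W)} \bigl(e(\partial W) \times [0,1]\bigr) \to e(W) \cup_{e(\partial W)} W' = W$ glues correctly (both restrictions are the identity on $e(\partial W)$), the source is canonically diffeomorphic to $e(W)$ by absorbing the external collar, and precomposing with $e : W \to e(W)$ produces the desired diffeomorphism $\phi : W \to W$. Since $\phi$ agrees with $e$ on the complement of a collar of $\partial W$, which is a deformation retract of $W$, the homotopy $\phi \simeq e$ follows as you describe, and $\phi$ is orientation-preserving because $e$ and $\Psi'$ are. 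With this adjustment your argument is complete and is essentially the paper's.
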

                \begin{proof}
                    The complement $C := W \setminus e\left(W \setminus \partial W \right)$ is a cobordism from $e(\partial W)$ to $\partial W$. All three of $C$, $e(\partial W)$ and $\partial W$ are simply connected, and since $e$ is a homotopy equivalence, by excision (applied to the interior of $e(W)$) the inclusion $e(\partial W) \hookrightarrow C$ is a homology equivalence.\par 
                    By Poincar\'e-Lefschetz duality, the inclusion $\partial W \hookrightarrow C$ is also a homology equivalence, and so by the $h$-cobordism theorem $C$ is a trivial cobordism.\par 
                    We fix a trivialisation of $C$, meaning a diffeomorphism $C \cong \partial W \times [0, 1]_s$ sending $\partial W$ to $\partial W \times \{1\}$, where $s$ is the co-ordinate on $[0, 1]$. Extend $\partial_s$ to a vector field $P$ on the whole of $W$, and let $\rho$ be its time-1 flow.\par 
                    Then $\rho \circ e$ is isotopic to $e$ as an embedding, but is itself a diffeomorphism. Since $e$ was orientation-preserving, $\rho \circ e$ also is.
                \end{proof}
                Now we let $V = \partial W$ and $\theta = \phi|_{\partial W}$.
                \begin{LL}
                    $V$ is a rational homology sphere.
                \end{LL}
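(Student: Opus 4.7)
The plan is to combine the rational acyclicity of $Z$ with Poincaré--Lefschetz duality applied to the manifold with boundary $(W, V)$.

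First I would observe that by Lemma \ref{Komputation}(5), $\tilde{H}^*(Y;\Q) = 0$, so $Y$ is rationally acyclic. Since $Z = \Sigma^{2(p-1)}Y \vee Y$ is a wedge of suspensions of $Y$, it follows immediately that $\tilde{H}^*(Z;\Q) = 0$. Because $W$ deformation retracts to $Z$, we therefore have $\tilde{H}^*(W;\Q) = 0$ and, equivalently by the universal coefficient theorem, $\tilde{H}_*(W;\Q) = 0$.

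Next I would exploit Poincaré--Lefschetz duality for the compact oriented $N$-manifold with boundary $(W, V)$, which gives $H^i(W, V;\Q) \cong H_{N-i}(W;\Q)$. Combined with rational acyclicity of $W$, this yields $H^i(W, V;\Q) = 0$ for $i < N$ and $H^N(W,V;\Q) \cong \Q$. Feeding these vanishings, together with $\tilde{H}^*(W;\Q) = 0$, into the long exact sequence of the pair $(W,V)$,
\begin{equation*}
    \cdots \to H^i(W, V;\Q) \to H^i(W;\Q) \to H^i(V;\Q) \to H^{i+1}(W, V;\Q) \to \cdots
\end{equation*}
one reads off $H^i(V;\Q) = 0$ for $0 < i < N-1$, $H^0(V;\Q) \cong \Q$, and $H^{N-1}(V;\Q) \cong H^N(W,V;\Q) \cong \Q$. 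Thus $V$ has the rational cohomology of the sphere $S^{N-1}$, which (since $V$ is connected and closed) is the statement that $V$ is a rational homology sphere.

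No step here looks like a genuine obstacle: the only things being used are the rational computation in Lemma \ref{Komputation}, the fact that wedges and suspensions preserve rational acyclicity, and the standard long exact sequence plus Poincaré--Lefschetz duality. The parity of $N$ (chosen even, so $\dim V = N-1$ is odd, matching the claim of Proposition \ref{Kexample}(3)) does not interfere with the argument.
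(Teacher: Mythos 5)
Your proof is correct and follows essentially the same route as the paper: both observe that $W$ is rationally acyclic (via Lemma \ref{Komputation}) and then run the long exact sequence of the pair $(W, V)$ together with Poincar\'e--Lefschetz duality. The only cosmetic difference is that you write the cohomological version of the exact sequence and duality $H^i(W,V;\Q) \cong H_{N-i}(W;\Q)$ whereas the paper uses the homological version $H_i(W,V;\Q) \cong H^{N-i}(W;\Q)$; these are equivalent arguments.
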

                \begin{proof}
                    By Lemma \ref{Komputation}, $Y$ and hence $Z$ and $W$ are rationally homology equivalent to a point. Then the result follows from the exact sequence of a pair (using Poincar\'e duality):
                    $$H_i(W; \Q) \rightarrow H^{N-i}(W; \Q) \rightarrow H_{i - 1}(V; \Q) \rightarrow H_{i - 1}(W; \Q) \rightarrow H^{N - i + 1}(W; \Q)$$
                \end{proof}
                The following two lemmas complete the proof of Proposition \ref{Kexample}.
                \begin{LL}
                    $\theta$ acts as the identity on $H^*(V)$.
                \end{LL}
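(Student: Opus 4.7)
The plan is to exploit naturality of the long exact sequence of the pair $(W, V)$ in cohomology under the pair map $(\phi, \theta) : (W, V) \to (W, V)$, combined with Poincar\'e-Lefschetz duality. By Lemma \ref{actionoff}, $f$ (and hence $\phi$) acts as the identity on both $H^*(W)$ and $H_*(W)$. Since $\phi$ is an orientation-preserving diffeomorphism of $W$ and the isomorphism $H^k(W, V) \cong H_{N-k}(W)$ is natural with respect to such diffeomorphisms, $\phi^*$ also acts as the identity on $H^*(W, V)$. The task is then to deduce that $\theta^*$ is the identity on $H^*(V)$.

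The main obstacle is that a naive five-lemma argument only yields that $\theta^*$ is an \emph{isomorphism} on $H^*(V)$, not that it is the identity. However, chasing naturality of the segment
\[
H^i(W) \xrightarrow{i^*} H^i(V) \xrightarrow{\delta} H^{i+1}(W, V)
\]
still gives two useful facts: first, $\theta^*$ is the identity on $\mathrm{im}(i^*) = \ker(\delta)$, since for $v = i^*(w)$ we have $\theta^*(v) = i^*(\phi^*(w)) = v$; second, for any $v \in H^i(V)$, $\delta(\theta^*(v) - v) = \phi^*(\delta(v)) - \delta(v) = 0$, so $\theta^* - \mathrm{Id}$ lands inside $\ker(\delta)$. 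Hence $\theta^* - \mathrm{Id}$ descends to a map $\mathrm{im}(\delta) \to \ker(\delta)$, which will vanish as soon as one of its source or target is zero.

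To conclude, it suffices to show that in each degree $i$, at least one of $H^i(W)$ or $H^{i+1}(W, V)$ vanishes. By Lemma \ref{Komputation} and the definition $Z = \Sigma^{2(p-1)} Y \vee Y$, $H^*(W) \cong H^*(Z)$ is nonzero only in degrees $\{0,\, 2q+1,\, 2q+2p-1\}$; by Poincar\'e-Lefschetz duality $H^k(W, V) \cong H_{N-k}(W)$, so $H^{i+1}(W, V)$ is nonzero only for $i \in \{N-1,\, N-2q-1,\, N-2q-2p+1\}$. The standing assumption $N > 4(p+q)$ ensures these two sets of degrees are disjoint (the largest element of the first is $2q+2p-1$, the smallest of the second is $N-2q-2p+1$, and $N > 4(p+q)$ puts a gap between them), so in every degree at least one side vanishes and hence $\theta^*$ is the identity on $H^*(V)$.
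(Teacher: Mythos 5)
Your proof is correct and follows essentially the same approach as the paper: both rely on computing $H^*(W)$ and $H^*(W, V) \cong H_{N-*}(W)$ via Lemma \ref{Komputation} and Poincar\'e--Lefschetz duality, observing that the bound $N > 4(p+q)$ separates their nonvanishing degrees, and then transferring the trivial action of $\phi$ on those groups to $H^*(V)$ via naturality of the long exact sequence of the pair. Your bookkeeping of the exact sequence (factoring $\theta^* - \mathrm{Id}$ through a map $\mathrm{im}(\delta) \to \ker(\delta)$) is a slightly cleaner way of packaging the paper's claim that in each degree either the restriction or the boundary map is an isomorphism.
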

                \begin{proof}
                    Note that since $\phi$ acts as the identity on $H_*(W)$ (by Lemma \ref{actionoff}), by Poincar\'e duality, $\phi$ also acts as the identity on $H^*(W, \partial W)$.\par 
                    From Lemma \ref{Komputation}, we see that
                    $$\tilde{H}^i(W) \cong \begin{cases}
                        \Z / p & \, i = 2q + 1 \, \mathrm{or}\, 2(p + q - 1) + 1\\
                        0 & \,\mathrm{otherwise}
                    \end{cases}$$
                    and since $W$ is orientable, by Poincar\'e duality
                    $$H^i(W, \partial W) \cong H_{N - i}(W) \cong \begin{cases}
                        \Z & \, i = N\\
                        \Z / p & \, i = N - 2q \, \mathrm{or} \, N - 2(p + q - 1)\\
                        0 & \, \mathrm{otherwise}
                    \end{cases}$$
                    Therefore by the long exact sequence of the pair $(W, \partial W)$, for all $i$ either the restriction map $\tilde{H}^i(W) \rightarrow \tilde{H}^i (\partial W)$ is an isomorphism or the boundary map $\tilde{H}^i(\partial W) \rightarrow H^{i + 1} (W, \partial W)$ is an isomorphism. Both of these maps are compatible with the actions of $\phi$ and $\theta$, so because $\phi$ acts as the identity on $H^*(W, \partial W)$ and on $H^*(W)$, the result follows.
                \end{proof}
                \begin{LL}
                    $\theta$ does not act as the identity on $K^*(V)$.
                \end{LL}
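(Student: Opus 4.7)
The plan is to follow the same template as the proof of the preceding lemma for $H^*(V)$, replacing singular cohomology with complex $K$-theory and appealing to Poincar\'e--Lefschetz duality in $K$-theory. The key observation is that $W$, being a codimension-zero submanifold of $\R^N$, is stably parallelisable and in particular spin$^c$, so it admits a canonical $K$-orientation. Since $\phi$ is an orientation-preserving diffeomorphism of $W$ restricting to $\theta$ on $\partial W$, the long exact sequence of the pair in $K$-cohomology is $\phi^*/\theta^*$-equivariant, and Poincar\'e--Lefschetz duality gives
$$\tilde{K}^i(W, \partial W) \cong \tilde{K}_{N-i}(W).$$

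The first step is to compute $\tilde{K}_*(Z)$: applying $K$-homology to the cofibre sequence $S^{2q} \xrightarrow{\cdot p} S^{2q} \to Y$, together with the fact that $\tilde{K}_*(S^{2q})$ is $\Z$ in even degrees and $0$ in odd degrees, yields $\tilde{K}_i(Y) \cong \Z/p$ for $i$ even and $0$ for $i$ odd. Hence $\tilde{K}_*(Z)$, and therefore $\tilde{K}_*(W)$, is concentrated in even degrees. Since $N$ is even this forces $\tilde{K}^i(W, \partial W) = 0$ for every odd $i$, and the long exact sequence of the pair then shows that the restriction map $\tilde{K}^i(W) \hookrightarrow \tilde{K}^i(\partial W)$ is injective in every odd degree.

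Next, the proof of Lemma \ref{actionoff} applies essentially verbatim to $\tilde{K}^*(W) \cong \tilde{K}^*(\Sigma^{2(p-1)} Y) \oplus \tilde{K}^*(Y)$: with respect to this decomposition, $\phi^*$ is upper triangular with identity on the diagonal and $g^*$ off the diagonal, and since $g^*$ is an isomorphism on $\tilde{K}^*$ it is non-zero in every odd degree where both summands are non-zero. By naturality of the long exact sequence, the resulting non-trivial action of $\phi^*$ on $\tilde{K}^i(W)$ is transported along the injection into $\tilde{K}^i(\partial W)$, producing a non-trivial action of $\theta^*$; Lemma \ref{reductions} then upgrades this to the required statement for unreduced $K^*(V)$. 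The main subtlety to check is the $\phi$-equivariance of Poincar\'e--Lefschetz duality in $K$-theory, but this follows from its description as cap product with the $K$-theoretic fundamental class of $(W, \partial W)$, which $\phi$ preserves because it is an orientation-preserving diffeomorphism of a stably framed manifold.
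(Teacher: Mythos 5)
Your proof is correct and follows the same strategy as the paper: use duality (Atiyah, or equivalently Poincar\'e--Lefschetz, for the $K$-oriented manifold $W$) together with the vanishing of $K_{\mathrm{odd}}(Y)$ to show that $K^i(W,\partial W)$ vanishes when $i$ is odd, deduce from the long exact sequence that $\tilde K^i(W)\to\tilde K^i(V)$ is injective there, and transport the nontrivial action of $\phi^*$ to $\theta^*$. The only minor remark is that the $\phi$-equivariance of the duality isomorphism that you flag as the ``main subtlety'' is not actually needed: duality serves only to compute a vanishing, and the equivariance that carries weight in the argument is the naturality of the long exact sequence of the pair under the map $(\phi,\theta)\colon(W,\partial W)\to(W,\partial W)$, which is automatic.
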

                \begin{proof}
                    We will show that the restriction map $\tilde{K}^i(W) \rightarrow \tilde{K}^i(V)$ is injective for all $i$, then the result follows from Lemmas \ref{actionoff} and \ref{reductions}.\par 
                    By the long exact sequence of a pair, the kernel of the restriction map is the image of 
                    $$K^i(W, \partial W) \rightarrow \tilde{K}^i(W)$$
                    $TW$ is trivial and hence oriented with respect to $K^*$, so by Atiyah duality
                    $$K^i(W, \partial W) \cong K_{N - i}(W)$$
                    Therefore using the decomposition 
                    $$\tilde{K}^*(W) \cong \tilde{K}^*\left(\Sigma^{2(p - 1)} Y\right) \oplus \tilde{K}^*(Y)$$ 
                    and Lemma \ref{Komputation}, we see that if $i$ is even then $\tilde{K}^i(W) = 0$, and if $i$ is odd then $K_{N - i}(W) = 0$. In either case this implies that the restriction map is injective in degree $i$.
                \end{proof}
        \section{An example in real $K$-Theory}\label{real K-theory Example}
            Let $U = S^3 \times S^2$. In this section we use the Hopf action of $S^3$ on $S^2$ to construct a simple diffeomorphism $U \rightarrow U$, and prove the following.
            \begin{PP}\label{KO example}
                There is a self-diffeomorphism $\zeta: U \rightarrow U$ which acts as the identity on integral homology but not as the identity on (the homology theory associated to) real $K$-theory $KO_*$.
            \end{PP}
            Then Theorem \ref{Main Theorem} implies
            \begin{CC} 
                If $L$ is diffeomorphic to $U$ and the condition of Proposition \ref{Orientation Criteria}(4) holds, then $\zeta$ does not lie in $\sG_L$.
            \end{CC}
            We view $S^3$ as the unit quaternions with unit $e \in S^3$, and we identify $S^2$ with the quotient $S^3 / S^1$ of $S^3$ by the right action of the unit complex numbers. Let 
            $$\eta: S^3 \rightarrow S^3/S^1 = S^2$$
            be the quotient map and let
            $$\mu: S^3 \times S^3 \rightarrow S^3$$
            be the product map. 
            $\mu$ descends to a map
            $$f: S^3 \times S^2 \rightarrow S^2$$
            which defines a left action of $S^3$ on $S^2$. We then define the diffeomorphism $\zeta$ to be the map $U \rightarrow U$ sending $(x, y)$ to
            $$(x, f(x, y)).$$
            Proposition \ref{KO example} then follows from Lemmas \ref{ yes id} and \ref{no id}.
            \begin{LL}\label{ yes id}
                $\zeta$ acts as the identity on the integral homology of $U$.
            \end{LL}
            \begin{proof}
                We first note that by the K\"unneth theorem,
                $$H_i(U) \cong \begin{cases}\Z & \,\mathrm{ if } \,i \in \{0, 2, 3, 5\}\\
                0 & \, \mathrm{ otherwise.}\end{cases}$$
                First note that $\zeta$ acts as the identity on $H_0(U)$ as $U$ is path-connected. \par 
                The following diagram commutes
                $$\begin{tikzcd}
                    \{e\} \times S^2 \arrow[d, hook, "\iota"] \arrow[dr, hook, "\iota"] & \\
                    U \arrow[r, "\zeta"] & U
                \end{tikzcd}$$
                where $\iota$ is the natural inclusion map. All maps in this diagram induce isomorphisms on $H_2$ so it follows that $\zeta$ acts as the identity on $H_2(U)$.\par  
                Similarly the following diagram commutes
                $$\begin{tikzcd}
                    U \arrow[r, "\zeta"] \arrow[d, "p"] & U \arrow[dl, "p"] \\
                    S^3
                \end{tikzcd}$$
                where $p: U \rightarrow S^3$ is the projection map onto the first coordinate. All maps in this diagram induce isomorphisms on $H_3$ so it follows that $\zeta$ acts as the identity on $H_3(U)$.\par 
                By the universal coefficients theorem, $\zeta$ acts as the identity on $H^2(U)$ and $H^3(U)$. Therefore since the cup product $H^2(U) \otimes H^3(U) \rightarrow H^5(U)$ is surjective, $\zeta$ acts as the identity on $H^5(U)$ and hence also on $H_5(U)$.
            \end{proof}
            \begin{LL}\label{no id}
                $\zeta$ does not act as the identity on $KO_3(U)$.
            \end{LL}
            \begin{proof}
                It follows from \cite{AtiyahK-TheoryandReality} that the map induced by $\eta$
                $$\eta_*: \Z \cong \tilde{KO}_3(S^3) \rightarrow \Z/2 \cong \tilde{KO}_3(S^2)$$
                is the quotient map $\Z \rightarrow \Z/2$.\par 
                Stably, $U$ is homotopy equivalent to a wedge product
                $$U \simeq \Sigma^2 \Sphere \vee \Sigma^3 \Sphere \vee \Sigma^5 \Sphere$$
                From this we see that
                \begin{align*}
                    \tilde{KO}_3(U) & \cong KO_3(\Sigma^2 \Sphere) \oplus KO_3(\Sigma^3 \Sphere) \oplus KO_3(\Sigma^5 \Sphere)\\
                    & \cong \Z \oplus \Z/2 \oplus 0
                \end{align*}
                and with respect to this decomposition, $\zeta_*$ is given by the matrix
                $$\begin{pmatrix}
                        Id_{\Z} & 0 \\
                        r & Id_{\Z/2}
                    \end{pmatrix}$$
                where $r: \Z \rightarrow \Z/2$ is the quotient map. This is not the identity so $\zeta$ does not act as the identity on $\tilde{KO}_3(U)$. By Lemma \ref{reductions}, $\zeta$ does not act as the identity on $KO_3(U)$.
            \end{proof}
    \bibliography{Refs.bib}{} \bibliographystyle{abbrv}
    \Addresses
\end{document}